\def\##1\#{\begin{align}#1\end{align}}
\def\$#1\${\begin{align*}#1\end{align*}}
\newcommand{\trace}{{\rm tr}}
\def\T{{ \mathrm{\scriptscriptstyle T} }}
\newcommand{\new}{{\rm new}}
\newcommand{\iid}{i.i.d.\,}
\newcommand{\SNR}{{\rm SNR}}
\newcommand{\as}{{\rm a.s.}}
\newcommand{\Rom}[1]{\text{\uppercase\expandafter{\romannumeral #1\relax}}}
\newcommand{\scolor}[1]{{\color{magenta}#1}}
\newcommand{\scomment}[1]{\scolor{$\dagger$}\marginpar{\tiny\scolor{S:\ #1}}}
\newcommand{\ycolor}[1]{{\color{blue}#1}}
\begin{document}


\title{\LARGE  Sketched ridgeless linear regression: The role of downsampling} 

\author{Xin Chen
\thanks{The first two authors contributed equally.}
\thanks{Department of Operations Research and Financial Engineering, Princeton University, 98 Charlton St, Princeton, NJ 08544, USA; E-mail: \texttt{xc5557@princeton.edu}.} \and  Yicheng Zeng$^*$\footnotemark[3]\thanks{Shenzhen Research Institute of Big Data, the Chinese University of Hong Kong, 2001 Longxiang Boulevard, Shenzhen, Guangdong, China; E-mail:\texttt{statzyc@sribd.cn}. Yicheng Zeng was a postdoctoral fellow at the University of Toronto when the bulk of this work was done. } \and  Siyue Yang\footnotemark[4] \and   Qiang Sun\thanks{Department of Statistical Sciences, University of Toronto, 700 University Ave, Toronto, ON M5G 1X6, Canada; E-mail: \texttt{syue.yang@mail.utoronto.ca}, \texttt{qiang.sun@utoronto.ca}.}}



\date{}

\maketitle

\vspace{-0.25in}

\begin{abstract}


Overparametrization often helps improve the generalization performance. This paper presents a dual view of overparametrization suggesting that downsampling may also help generalize. Focusing on the proportional regime $m\asymp n \asymp p$, where $m$ represents the sketching size, $n$ is the sample size, and $p$ is the feature dimensionality, we investigate two out-of-sample prediction risks of the sketched ridgeless least square estimator. Our findings challenge conventional beliefs by showing that downsampling does not always harm generalization but can actually improve it in certain cases. We identify the optimal sketching size that minimizes out-of-sample prediction risks and demonstrate that the optimally sketched estimator exhibits stabler risk curves, eliminating the peaks of those for the full-sample estimator. To facilitate practical implementation, we propose an empirical procedure to determine the optimal sketching size. Finally, we extend our analysis to cover central limit theorems and misspecified models. Numerical studies strongly support our theory.

\end{abstract}
\noindent
{\bf Keywords}: Downsampling, minimum-norm solutions, overparametrization, random sketching, ridgeless least square estimators.

\tableofcontents

\section{Introduction}

According to international data corporation, worldwide data will grow to 175 zettabytes by 2025, with as much of the data residing in the cloud as in data centers. These massive datasets hold tremendous potential to revolutionize operations and analytics across various domains. However, their sheer size presents unprecedented computational challenges, as many traditional statistical methods and learning algorithms struggle to scale effectively. 


In recent years,  sketch-and-solve methods, also referred to as sketching algorithms, have emerged as a powerful solution for approximate computations over large datasets  \citep{pilanci2016fast, mahoney2011randomized}. Sketching algorithms first employ random sketching/projection or random sampling techniques to construct a small ``sketch" of the full dataset, and then use this sketch as a surrogate to perform analyses of interest that would otherwise be computationally impractical on the full dataset.

This paper focuses on the linear regression problem. We assume that we have collected a set of independent and identically distributed (i.i.d.) data points following the model:
\begin{equation}\label{eq:model}
y_i = \beta^\top x_i + \varepsilon_i,\ i = 1,\cdots,n,
\end{equation}
where $y_i \in \mathbb{R}$ represents the label of the $i$-th observation, $\beta \in \mathbb{R}^p$ is the unknown random regression coefficient vector, 
$\mathbb{R}^p \ni x_i\sim x \sim P_x$ is the $p$-dimensional feature vector of the $i$-th observation, with $P_x$ denoting a probability distribution on $\mathbb{R}^p$ having mean $\mathbb{E}(x) = 0$ and covariance $\cov(x) = \Sigma$. The $i$-th random noise term $\mathbb{R} \ni\varepsilon_i \sim \varepsilon \sim P_\varepsilon$ is independent of $x_i$, with $P_\varepsilon$ being a probability distribution on $\RR$ having mean $\EE(\varepsilon) = 0$ and variance $\var(\varepsilon) = \sigma^2$. In matrix form, the model can be expressed as: 
\$
Y= X\beta + E,
\$
where $X = (x_1,\cdots,x_n)^\top \in \RR^{n\times p}$, $Y = (y_1,\cdots,y_n)^\top \in \mathbb{R}^n$, and $E = (\varepsilon_1,\cdots,\varepsilon_n)^\top \in \mathbb{R}^n$. 

We consider the following ridgeless least square estimator 
\$
\hat \beta := (X^\top X)^{+}X^\top Y = \lim_{\lambda \rightarrow 0^+}(X^\top  X + n\lambda I_p)^{-1} {X}^\top  {Y}, 
\$
where $(\cdot)^+$ denotes the Moore-Penrose pseudoinverse and $I_p \in \mathbb{R}^{p \times p}$ is the identity matrix. In the case where $\text{rank}(X) = p$, the estimator $\hat{\beta}$ reduces to the ordinary least square (OLS) estimator, which is the de-facto standard for linear regression due to its optimality properties.  However, computing the OLS estimator, typically done via QR decomposition \citep{golub2013matrix}, has a computational complexity of $\mathcal{O}(np^2)$. This renders the computation of the full-sample OLS estimator infeasible when the sample size $n$ and dimensionality $p$ reach the order of millions or even billions.

Sketching algorithms provide a solution to reduce the computational burden by reducing the data size, aka downsampling.  This is achieved by multiplying the full dataset $(X, Y)$ with a sketching matrix $S\in\RR^{m\times n}$ to obtain the sketched dataset $(SX, SY)\in \RR^{m\times p}\times \RR^m$, where $m < n$ is the sketching size. Instead of computing the full-sample OLS estimator, we compute the sketched ridgeless least square estimator based on the sketched dataset:
\#\label{eq:sketched}
\hat{\beta}^{S} 
&= (X^\top S^\top S X)^+ X^\top S^\top S Y  
= \lim_{\lambda\to 0^+} (X^\top S^\top SX + n\lambda I_p)^{-1}X^\top S^\top SY.
\#
The total computational complexity for computing the sketched data and sketched least square estimator is approximately $\cO(np\log m + m p^2)$ when using fast orthogonal sketches \citep{pilanci2016fast}. The prevailing belief is that sketching reduces the runtime complexity at the expense of statistical accuracy \citep{woodruff2014sketching,raskutti2016statistical, drineas2018lectures, dobriban2018asymptotics}. Indeed, as pointed out by \cite{dobriban2018asymptotics}, a larger number of samples leads to a higher accuracy. They showed that, in the case of orthogonal sketches, if one sketches to $m$ samples such that $p<m<n$, the test error increases by a factor of $m(n-p)/{n(m - p)}>1$, which equals $1.1$ when $m=10^6$, $n=10^7$, and $p = 10^5$. \cite{raskutti2016statistical} reported a similar phenomenon by considering the regime $n \gg p$ and various error criteria. However, these results only focus on the underparameterized regime ($p<m$) and do not reveal the statistical role of downsampling in a broader regime. It is therefore natural to ask the following questions:
\begin{quote}
\textit{What is the statistical role of downsampling? Does downsampling always hurt the statistical accuracy?}
\end{quote}

\begin{figure}[t]
    \centering  \includegraphics[width=.6\textwidth]{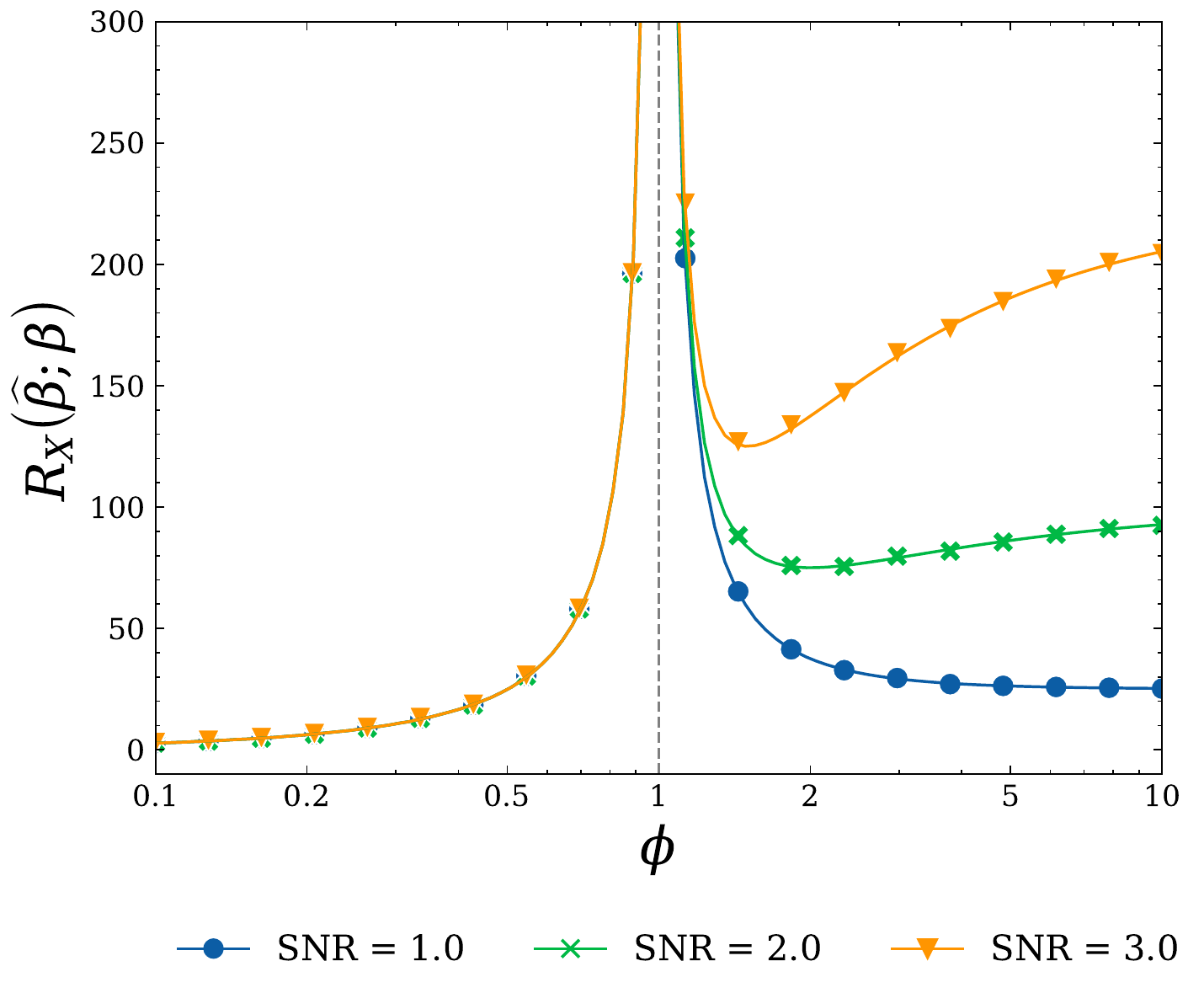}
    \caption{\small 
    Asymptotic risk curves for the ridgeless least square estimator, as functions of $\phi = p/n$. The blue, green, and yellow lines are theoretical risk curves for $\SNR = \alpha / \sigma = 1, 2, 3$ with $(\alpha, \sigma)$ taking $(5, 5), (10, 5)$ and $(15, 5)$, respectively. The dots, crosses, and triangles mark the finite-sample risks with $n=400$, $\phi$ varying in $[0.1,10]$ and $p = [n \phi]$. Each row of the feature matrix $X\in \RR^{n\times p}$ was \iid drawn from $\mathcal{N} (0, I_p)$. }
    \label{fig:fig_0}
\end{figure}

This paper answers the questions above in the case of sketched ridgeless least square estimators \eqref{eq:sketched}, in both the underparameterized and overparameterized regimes, where downsampling is achieved through random sketching. Our intuition is that downsampling  plays a similar role as that of increasing the model capacity. Because increasing the model capacity has been recently observed in modern machine learning  to often help improve the generalization performance \citep{he2016deep, neyshabur2014search, novak2018sensitivity, belkin2018reconciling, nakkiran2021deep}, downsampling may also benefit generalization properties. This ``dual view" can be seen clearly in the case of linear regression, where the out-of-sample prediction risk only depends on the model size and sample size via the quantity $p/n$ \citep{hastie2019surprises}; see Figure \ref{fig:fig_0}. Thus increasing the model size ($p$) has an equivalent impact on the generalization performance as reducing the sample size ($n$).

Motivated by this dual view, we examine the out-of-sample prediction risks of the sketched ridgeless least square estimator in the proportional regime, where the sketching size $m$ is comparable to the sample size $n$ and the dimensionality $p$. We consider a broad class of sketching matrices that satisfy mild assumptions, as described in Assumption \ref{assumption_sketchingMatrix}, which includes several existing sketching matrices as special cases. Our work makes the following key contributions.
\begin{enumerate}
\item First, we provide asymptotically exact formulas for the two out-of-sample prediction risks in the proportional regime. This allows us to reveal the statistical role of downsampling in terms of generalization performance. Perhaps surprisingly, we find that downsampling does not always harm the generalization performance and may even improve it in certain scenarios.

\item Second, we show that orthogonal sketching is optimal among all types of sketching matrices considered in the underparameterized case. In the overparameterized case however, all general sketching matrices are equivalent to each other.

\item Third, we identify the optimal sketching sizes that minimize the out-of-sample prediction risks. The optimally sketched ridgeless least square estimators exhibit universally better risk curves when varying the model size, indicating their improved stability compared with the full-sample estimator.

\item Fourth, we propose a practical procedure to empirically determine the optimal sketching size using an additional validation dataset, which can be relatively small in size.

\item  Fifth, in addition to characterizing the first-order limits, we provide central limit theorems for the risks. Leveraging results from random matrix theory for covariance matrices \citep{zhang2007spectral, el2009concentration, Knowles2017, zheng2015substitution}, we establish almost sure convergence results for the test risks. These results complement the work of \cite{dobriban2018asymptotics}, which focused on the asymptotic limits of expected risks. The expected risk results can be recovered from our findings using the dominated convergence theorem.
\end{enumerate}

\subsection{Related work}

\paragraph{Generalization properties of overparameterized models} 
The generalization properties of overparametrized models have received significant attention in recent years. It all began with the observation that overparameterized neural networks often exhibit benign generalization performance, even without the use of explicit regularization techniques \citep{he2016deep, neyshabur2014search, canziani2016analysis, novak2018sensitivity, zhang2021understanding, bartlett2020benign, liang2020just}. This observation challenges  the conventional statistical wisdom that overfitting  the training data leads to poor accuracy on new examples.  To reconcile this discrepancy, \cite{belkin2018reconciling} introduced the unified ``double descent" performance curve that reconciles  the classical understanding with the modern machine learning practice. This double descent curve subsumes the textbook U-shape bias-variance-tradeoff curve  \citep{hastie2009elements} by demonstrating how increasing model capacity beyond the interpolation threshold can actually lead to improved test errors. Subsequent research has aimed to characterize this double descent phenomenon in various simplified models, including linear models \citep{hastie2019surprises,richards2021asymptotics},  random feature models \citep{mei2022generalization}, and partially optimized two-layer neural network \citep{ba2019generalization}, among others. 

\paragraph{Implicit regularization and minimum $\ell_2$-norm solutions} 
Another line of research focuses on understanding the phenomenon of benign overfitting through implicit regularization  mechanisms in overparameterized models  \citep{neyshabur2014search}. For instance, \cite{gunasekar2018characterizing} and \cite{zhang2021understanding} showed that  gradient descent (GD) converges  to the minimum $\ell_2$-norm solutions in  linear regression problems, which corresponds to the ridgeless least square estimators.  
\cite{hastie2019surprises} characterized  the exact out-of-sample prediction risk for the ridgeless least square estimator in the proportional regime. Minimum $\ell_2$-norm solutions are  also studied for other models, including kernel ridgeless regression \citep{liang2020just}, classification \citep{chatterji2021finite,liang2021interpolating, muthukumar2021classification}, and the random feature model \citep{mei2022generalization}.

\paragraph{Paper overview} 
The rest of this paper proceeds as follows. Section \ref{sec:pre} provides the necessary preliminaries for our analysis. In Section \ref{sec:isotropic}, we investigate the out-of-sample prediction risks under the assumption of isotropic features. Section \ref{sec:correlated} focuses on the case of correlated features.  We present a simple yet practical procedure to determine the optimal sketching size in Section \ref{sec:prac}. In Section \ref{sec:discussion}, we extend our results in several directions. Section \ref{sec:conclusion} provides the conclusions and discussions of the study. The details of some numerical experiments, computational cost comparisons, as well as all proofs, are provided in the appendix.

\section{Preliminaries}\label{sec:pre}

In this section, we provide definitions for two types of random sketching matrices,  introduce two out-of-sample prediction risks to measure  the generalization performance, and present several standing assumptions that are crucial for our analysis.

\subsection{Sketching matrix}
A sketching matrix $S \in \RR^{m\times n}$ is used to construct the sketched dataset $(SX, SY)$, allowing us to perform approximate computations  on the sketched dataset for the interest of the full dataset. Recall $m$ is the sketching size and we shall refer to $m/n$ as the downsampling ratio.  We consider two types of sketching matrices: orthogonal sketching matrices and \iid sketching matrices, defined as follows. 

\begin{definition}[Orthogonal sketching matrix]
An orthogonal sketching matrix $S\in \RR^{m\times n}$ is a partial orthogonal random matrix, i.e., $S$ satisfies the condition $SS^\top = I_m$, where $I_m$ denotes the identity matrix of size $m\times m$.
\end{definition}

\begin{definition}[\iid sketching matrix]\label{def:iid}
An i.i.d. sketching matrix $S$ is a random matrix  whose entries are \iid, each with mean zero, variance $1/n$, and a finite fourth moment.
\end{definition}

For i.i.d. sketching, we consider \iid Gaussian sketching matrices in all of our experiments, although our results hold for general \iid sketching matrices. For orthogonal sketching, we construct an orthogonal sketching matrix based on the subsampled randomized Hadamard transforms \citep{ailon2006approximate}. 
Specifically, we use  $S=BHDP$, where the rows of $B \in \RR^{m\times n}$  are sampled  without replacement from the standard basis of  $\RR^n$, $H \in \RR^{n \times n}$ is  a Hadamard matrix \footnote{The definition of Hadamard matrices can be found, for example, in \citep{ailon2006approximate}.}, $D \in \RR^{n \times n}$ is a diagonal matrix of i.i.d. Rademacher random variables, and $P \in \RR^{n \times n}$ is a uniformly distributed permutation matrix. The time complexity of computing $(SX,SY)$ is of order $\cO(np\log m )$. Orthogonal sketching matrices can also be realized by, for example, subsampling and Haar distributed matrices \citep{mezzadri2006generate}.


\subsection{Out-of-sample prediction risk}

Recall that $\hat\beta^{S}$ is the sketched ridgeless regression estimator defined in Equation \eqref{eq:sketched}. Let us consider a test data point $x_{\text{new}}\sim P_x$, which is independent of the training data. 
Following \cite{hastie2019surprises},  we consider the following out-of-sample prediction risk as a measure of the generalization performance:
\$
R_{(\beta, S,X)} \left(\hat\beta^{S};\beta\right)&=\EE\left[\left(x_{\text{new}}^\top \hat\beta^{S} - x_{\text{new}}^\top  \beta\right)^2  \Big| \beta, S,X \right]
=\EE\left[ \left\|\hat\beta^{S}-\beta\right\|_\Sigma^2 \Big|\beta, S,X \right],
\$
where $\Sigma:= \cov(x_i)$ is the covariance matrix of $x_i$, and $\|x\|_\Sigma^2:=x^\top  \Sigma x$.  The above conditional expectation  is taken with respect to the randomness of $\{\varepsilon_i\}_{1\le i\le n}$ and $x_{\text{new}}$,  while  $\beta, S$, and $X$ are fixed. 
We can decompose the out-of-sample prediction risk into bias and variance components:
\#\label{risk_decomposition}
&R_{(\beta, S,X)}\rbr{\hat{\beta}^S;\beta}
= B_{(\beta, S,X)}\left(\hat\beta^{S};\beta\right)+V_{(\beta,S,X)}\left(\hat\beta^{S}; \beta\right), 
\#
where 
\#
B_{(\beta, S,X)}\left(\hat\beta^{S};\beta\right) &= \left\|\EE\left(\hat\beta^{S}|\beta, S,X\right)-\beta\right\|_\Sigma^2,   \label{bias1}\\ 
V_{(\beta,S,X)}\left(\hat\beta^{S}; \beta\right) &= {\rm tr}\left[{\rm Cov}\left(\hat\beta | \beta,S,X\right)\Sigma\right].  \label{variance1}
\#
We also consider a second out-of-sample prediction risk, defined as: 
\$
R_{(S,X)} \left(\hat\beta^{S};\beta\right)&=\EE\left[\left(x_{\text{new}}^\top \hat\beta^{S} - x_{\text{new}}^\top  \beta\right)^2  \Big|  S,X \right]
=\EE\left[ \left\|\hat\beta^{S}-\beta\right\|_\Sigma^2 \Big| S,X \right].
\$
The second one also averages over the randomness of $\beta$. Similarly, we have the following bias-variance decomposition
\$
R_{(S,X)}\rbr{\hat{\beta}^S;\beta} = B_{( S,X)}\left(\hat\beta^{S};\beta\right)+V_{(S,X)}\left(\hat\beta^{S}; \beta\right),
\$
where
\#
\!\!B_{(S,X)}\!\left(\hat\beta^{S};\beta\right) &\!=\! \EE \sbr{\left\|\EE\left(\hat\beta^{S}|\beta, S,X\right)\!-\!\beta\right\|_\Sigma^2\big| S,\!X},  \label{bias2} \\
\!\!V_{(S,X)}\!\left(\hat\beta^{S}; \beta\right) &\!=\! \EE\cbr{{\rm tr}\!\left[{\rm Cov}\left(\hat\beta | \beta,S,X\right)\!\Sigma\right]\big| S,\!X}.  \label{variance2}
\#

We shall also refer to the above out-of-sample prediction risks as test risks or simply risks, since they are the only risks considered in this paper.  Throughout the paper, we study the above two out-of-sample prediction risks by examining their bias and variance terms respectively.  Specifically,  we study the  behaviors of $R_{(\beta, S,X)}(\hat \beta^{S};\beta)$ and $R_{(S,X)}(\hat \beta^{S};\beta)$ in the proportional asymptotic limit where the sketching size $m$, sample size $n$, and dimensionality $p$ all tend to infinity such that  the aspect ratio converges as  $\phi_n:=p/n \to \phi$, and the downsampling ratio converges as  $\psi_n:=m/n \to \psi \in (0,1)$.
It is worth noting that $R_{(\beta, S,X)}(\hat \beta^{S};\beta)$ exhibits larger variability due to the additional randomness introduced by the random variable $\beta, S, X$ when compared with $R_{(S,X)}(\hat \beta^{S};\beta)$.

\subsection{Assumptions}

This subsection collects  standing assumptions. 

\begin{assumption}[Covariance and moment conditions]\label{assump_model_spec}
    For $i = 1,\cdots,n$, $x_i = \Sigma^{1/2} z_i$, where $z_i$ has \text{i.i.d.} entries with mean zero, variance one and a finite moment of order $4+\eta$ for some $\eta>0$. The noise $\varepsilon$ is independent of $x$, and follows a distribution $P_\varepsilon$  on $\mathbb R$ with mean $\EE(\varepsilon)=0$ and variance ${\rm var}(\varepsilon)=\sigma^2$. 
\end{assumption}

\begin{assumption}[Correlated features]\label{assumption_general}
    The matrix $\Sigma$ is a deterministic positive definite matrix, and there exist constants $C_0, C_1$ such that $0 < C_0 \leq \lambda_{\min}(\Sigma) \leq \lambda_{\min}(\Sigma) \leq C_1$ for all $n$ and $p$. The empirical spectral distribution (ESD) of $\Sigma$  is defined as $F^\Sigma(x) = \frac{1}{p}\sum_{i=1}^p \mathbf{1}_{[\lambda_i(\Sigma),\infty)}(x)$. Assume that as $p \to \infty$, the ESD $F^\Sigma$ converges weakly to a probability measure $H$.  
\end{assumption}

\begin{assumption}[Random $\beta$]\label{assumption_random_effect}
The coefficient vector $\beta \in \RR^p$ is a random vector with i.i.d. entries satisfying $\EE(\beta) = 0$, $\EE\left((\sqrt{p}\beta_i)^2\right) = \alpha^2$, and $\sup_i\EE\left((\sqrt{p}\beta_i)^{4+\eta}\right) < \infty$ for some $\eta>0$.  It is assumed to be independent of the data matrix $X$, the noise $\varepsilon$, and the sketching matrix $S$. 
\end{assumption}

\begin{assumption}[Sketching matrix]\label{assumption_sketchingMatrix}
Let $S \in \RR^{m \times n} $ be a sketching matrix.  Suppose the ESD of $SS^\top $ converges weakly to a probability measure $B$. {Furthermore, there exist constants $\tilde{C}_0, \tilde{C}_1 > 0$ such that  almost surely for all large $n$, it holds that $0 < \tilde{C}_0 \leq \lambda_{\min}\rbr{SS^\top } \leq \lambda_{\max}\rbr{SS^\top } \leq \tilde{C}_1$.}
\end{assumption}

Assumption \ref{assump_model_spec} specifies the covariance matrix for features and moment conditions for both features and errors  \citep{dobriban2018high, hastie2019surprises,li2021asymptotic}. While \cite{dobriban2018asymptotics} requires only a finite fourth moment for $z_i$, which is slightly weaker than our moment condition, this is because they studied the expected risk, which has less randomness compared to our risks.  Assumption \ref{assumption_general} considers correlated features. In this paper, we first focus on the random $\beta$ case as stated in Assumption \ref{assumption_random_effect}, where $\beta$ has i.i.d. elements, allowing for a clear presentation of optimal sketching size results. The assumption of random $\beta$ is commonly adopted in the literature \citep{dobriban2018high,li2021asymptotic}. We also consider the deterministic $\beta$ in Section~\ref{sec:discussion}, where the interaction between $\beta$ and $\Sigma$ needs to be taken into account. 
Assumption \ref{assumption_sketchingMatrix} regarding the sketching matrix is relatively mild. For example, orthogonal sketching matrices naturally satisfy this assumption. According to \cite{bai1998no}, almost surely there are no eigenvalues outside the support of the limiting spectral distribution (LSD) of large-dimensional sample covariance matrices for sufficiently large sample size. Therefore, the i.i.d. sketching matrices in Definition \ref{def:iid} also satisfy this assumption.

\section{A warm-up case: Isotropic features}\label{sec:isotropic}

As a warm-up, we first study the case of isotropic features, specifically when $\Sigma=I_p$, and postpone the investigation of  the  correlated case to  Section \ref{sec:correlated}. Before presenting the limiting behaviors, we establish the relationship between the two out-of-sample prediction risks through the following lemma, which is derived in the general context of correlated features. 

\begin{lemma}\label{lemma_bias_variance}
Under Assumptions \ref{assump_model_spec} and \ref{assumption_random_effect}, the  biases \eqref{bias1} and \eqref{bias2}, as well as the variances \eqref{variance1} and \eqref{variance2}, can be expressed as follows:
\begin{align*}
B_{(\beta,S,X)}\rbr{\hat{\beta}^S;\beta} 
& =\beta^\top \sbr{(X^\top  S^\top  S X)^+ X^\top  S^\top  S  X -I_p}\Sigma 
\sbr{(X^\top  S^\top  S X)^+ X^\top  S^\top  S  X -I_p}\beta,\\
B_{(S,X)}\rbr{\hat{\beta}^S;\beta}
& = \frac{\alpha^2}{p}\trace \cbr{\sbr{I_p - (X^\top  S^\top  S X)^+ X^\top  S^\top  S  X } \Sigma }, \\
V_{(\beta, S,X)}\left(\hat\beta^{S}; \beta\right)
& = V_{( S,X)}\left(\hat\beta^{S}; \beta\right)
=  \trace \sbr{\sigma^2(X^\top  S^\top  SX)^+ X^\top  S^\top  S S^\top  SX (X^\top  S^\top  SX)^+\Sigma}. 
\end{align*}
Furthermore, suppose there exists $C_1$ such that $\lambda_{\max}\rbr{\Sigma} \leq C_1$. Then as $n,p \to \infty$, 
\begin{align}\label{two_risk_equal_limit}
R_{(\beta,S,X)}\rbr{\hat{\beta}^S;\beta} - R_{(S,X)}\rbr{\hat{\beta}^S;\beta} \overset{{\rm a.s.}}{\to} 0.
\end{align}
\end{lemma}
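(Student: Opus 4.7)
The plan is to first derive the three explicit bias/variance formulas by straightforward algebra, and then reduce the almost-sure convergence of the risk difference to a standard concentration-of-quadratic-forms statement in $\beta$.

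Set $A := X^\top S^\top S X$ and $P := A^{+} A$. Since $A$ is symmetric, $P$ is the orthogonal projection onto $\mathrm{Range}(A)$, so $P^\top = P$ and $(I_p - P)^2 = I_p - P$. Substituting $Y = X\beta + E$ into $\hat\beta^S = A^{+} X^\top S^\top S Y$ gives the decomposition $\hat\beta^S - \beta = (P - I_p)\beta + A^{+} X^\top S^\top S E$. Because $E$ is independent of $(\beta, S, X)$ with $\mathbb{E}[E] = 0$ and $\mathrm{Cov}(E) = \sigma^2 I_n$, taking the conditional mean and covariance with respect to $E$ yields $\mathbb{E}[\hat\beta^S \mid \beta, S, X] - \beta = (P - I_p)\beta$ and $\mathrm{Cov}(\hat\beta^S \mid \beta, S, X) = \sigma^2 A^{+} X^\top S^\top S S^\top S X A^{+}$. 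Plugging these into \eqref{bias1} and \eqref{variance1} reproduces the stated formulas for $B_{(\beta, S, X)}$ and $V_{(\beta, S, X)}$.

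For $B_{(S,X)}$, I would further average over $\beta$, using $\mathbb{E}[\beta\beta^\top] = (\alpha^2/p) I_p$ under Assumption \ref{assumption_random_effect}, together with symmetry of $P$, cyclicity of the trace, and idempotency of $I_p - P$:
\[
B_{(S,X)} \;=\; \mathbb{E}_\beta\bigl[\beta^\top (I_p - P)\Sigma (I_p - P)\beta \,\bigm|\, S, X\bigr] \;=\; \tfrac{\alpha^2}{p}\,\mathrm{tr}\bigl[(I_p - P)^2 \Sigma\bigr] \;=\; \tfrac{\alpha^2}{p}\,\mathrm{tr}\bigl[(I_p - P)\Sigma\bigr].
\]
Since the conditional covariance derived above does not depend on $\beta$, averaging over $\beta$ changes nothing, giving $V_{(\beta,S,X)} = V_{(S,X)}$.

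Because the two variances coincide, the risk difference reduces to a bias difference,
\[
R_{(\beta,S,X)} - R_{(S,X)} \;=\; \beta^\top M \beta - \tfrac{\alpha^2}{p}\,\mathrm{tr}(M), \qquad M := (I_p - P)\,\Sigma\,(I_p - P).
\]
Now $I_p - P$ is a projection and $\lambda_{\max}(\Sigma) \le C_1$ by hypothesis, so $\|M\|_{\mathrm{op}} \le C_1$ deterministically, and consequently $\mathrm{tr}(M^2) \le p\,C_1^{2}$. Writing $\beta_i = (\alpha/\sqrt{p})\,\tilde\beta_i$ with $\tilde\beta_i$ i.i.d., mean zero, unit variance, and $(4+\eta)$-th moment, a standard quadratic-form concentration bound (for instance Lemma B.26 of Bai and Silverstein, 2010, applied with moment order $k = 2 + \eta/2 > 2$) gives
\[
\mathbb{E}\Bigl[\,\bigl|\tilde\beta^\top M \tilde\beta - \mathrm{tr}(M)\bigr|^{k} \,\Bigm|\, S, X\Bigr] \;\le\; C_k\,\mathrm{tr}(M^2)^{k/2} \;\le\; C_k\,(pC_1^2)^{k/2},
\]
which, after scaling by $(\alpha^2/p)^k$, produces $\mathbb{E}\bigl[|R_{(\beta,S,X)} - R_{(S,X)}|^{k} \mid S,X\bigr] \le C_k'\,p^{-k/2}$. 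Taking unconditional expectations and applying Markov's inequality, the tail probability is summable in $n$ for $k > 2$, and Borel–Cantelli delivers the almost-sure convergence \eqref{two_risk_equal_limit}.

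The main obstacle is choosing a quadratic-form moment inequality whose hypotheses are satisfied by the $(4+\eta)$-moment assumption on $\beta$; the Bai–Silverstein bound with exponent slightly above $2$ is what makes the Borel–Cantelli step work, and the boundedness of $\|M\|_{\mathrm{op}}$ (from the operator-norm bound on $\Sigma$ and the projection structure of $I_p - P$) is what keeps $\mathrm{tr}(M^2)/p^2$ summable. Everything else is routine algebra.
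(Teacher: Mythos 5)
Your argument is correct and follows essentially the same route as the paper's own proof: compute $\mathbb{E}[\hat\beta^S \mid \beta, S, X]$ and $\mathrm{Cov}(\hat\beta^S \mid \beta, S, X)$ from $Y = X\beta + E$, use idempotency of $P = A^+A$ and $\mathbb{E}[\beta\beta^\top] = (\alpha^2/p)I_p$ to get the explicit bias/variance expressions, and then apply Lemma B.26 of Bai and Silverstein to the quadratic form in $\beta$ using the operator-norm bound $\|(I_p - P)\Sigma(I_p - P)\|_2 \le C_1$. The paper states the Bai--Silverstein step in a single sentence; you correctly fill in the choice of moment exponent $k = 2 + \eta/2 > 2$ (which is what the $(4+\eta)$-moment condition in Assumption \ref{assumption_random_effect} is for) and the Markov--Borel--Cantelli argument that the paper leaves implicit.
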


The above lemma establishes the asymptotic equivalence of the two risks when $\beta$ is random, with the variance terms being exactly equal and the bias terms converging asymptotically. Due to this asymptotic equivalence, our primary focus will be on analyzing the risk $R_{(S,X)}(\hat{\beta}^S;\beta)$. However, it should be noted that the second-order inferential results do not align in general, and this discrepancy will be discussed in detail in Section \ref{sec:discussion}.

\subsection{Limiting risks}

We first focus on the case of isotropic features, which enables us to obtain clean expressions for the limiting risks. 
We characterize the limiting risks  with two  types of sketching matrices: orthogonal and \iid sketching matrices, which were introduced earlier. Recall that we consider $m,n,p\to \infty$ such that $p/n \to \phi$ and $m/n \to \psi\in(0,1)$. 

\begin{theorem}\label{thm:isotropic_limiting_risk}
Under Assumptions \ref{assump_model_spec}, \ref{assumption_random_effect}, \ref{assumption_sketchingMatrix}, and $\Sigma = I_p$,  the following results hold. 
\begin{enumerate}
\item[(i)] If $S$ is an orthogonal sketching matrix, then
\begin{align*}
R_{(S,X)}\rbr{\hat{\beta}^S;\beta}
\overset{{\rm a.s.}}{\to}  
\begin{cases}
\dfrac{\sigma^2\phi\psi^{-1}}{1-\phi\psi^{-1}},\quad &\phi \psi^{-1}< 1,\\
\alpha^2 (1-\psi \phi^{-1})+\dfrac{\sigma^2}{\phi\psi^{-1}-1},\quad & \phi \psi^{-1}>1.
\end{cases}
\end{align*}
\item[(ii)] If $S$ is an \text{i.i.d.} sketching matrix, then 
\begin{align*}
R_{(S,X)}\rbr{\hat{\beta}^S;\beta} 
\overset{{\rm a.s.}}{\to}
\begin{cases}
\dfrac{\sigma^2\phi}{1-\phi} + \dfrac{\sigma^2 \phi\psi^{-1}}{1-\phi\psi^{-1}},\quad &\phi \psi^{-1}< 1,\\
\alpha^2 (1-\psi \phi^{-1})+\dfrac{\sigma^2}{\phi\psi^{-1}-1},\quad & \phi \psi^{-1}>1.
\end{cases}
\end{align*}
\end{enumerate}
Moreover, $R_{(\beta, S,X)}(\hat{\beta}^S;\beta)$ with orthogonal sketching and \iid sketching  converge almost surely to the same limits,  respectively.
\end{theorem}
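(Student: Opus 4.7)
My plan is to specialize Lemma \ref{lemma_bias_variance} to $\Sigma = I_p$ and analyze the bias and variance separately by random-matrix tools, treating the orthogonal and i.i.d.\ sketches in parallel. The bias immediately collapses to $B_{(S,X)}(\hat\beta^S;\beta) = (\alpha^2/p)\{p - \rank(SX)\}$, and under Assumptions \ref{assump_model_spec} and \ref{assumption_sketchingMatrix} we have $\rank(SX) = \min(m,p)$ almost surely for all large $n$, so the bias converges almost surely to $\alpha^2\max\{0,\,1-\psi\phi^{-1}\}$, matching both stated regimes.

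For the variance under orthogonal sketching, $SS^\top = I_m$ telescopes the trace to $\sigma^2\trace[(X^\top S^\top S X)^+]$. Writing $S^\top S = UU^\top$ for an $n\times m$ isometry $U$, one has $X^\top S^\top SX = (U^\top X)^\top (U^\top X)$ with $U^\top X$ effectively an $m\times p$ matrix of asymptotically i.i.d.\ entries. In the regime $\phi\psi^{-1}<1$, the ESD of $(1/m)X^\top S^\top SX$ converges to a Marchenko--Pastur law with ratio $\phi/\psi$; integrating $1/x$ yields $\sigma^2\phi\psi^{-1}/(1-\phi\psi^{-1})$. In the regime $\phi\psi^{-1}>1$, the $m$ nonzero eigenvalues of $X^\top S^\top SX$ coincide with those of the $m\times m$ matrix $SXX^\top S^\top$, whose ESD has a Marchenko--Pastur limit with aspect ratio $\psi/\phi$ supported away from zero; integrating $1/x$ yields $\sigma^2/(\phi\psi^{-1}-1)$.

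For i.i.d.\ sketching the trace does not collapse because $S^\top S S^\top S \neq S^\top S$; but an SVD of $SX$ rewrites the variance in the overparameterized regime as $\sigma^2\trace\{[(SX)(SX)^\top]^{-1}(SS^\top)\}$. Conditional on $S$ and up to negligible errors, the $m\times p$ matrix $SX$ behaves like $(SS^\top)^{1/2}W$ with $W$ an $m\times p$ matrix of i.i.d.\ entries; the factor $SS^\top$ then cancels exactly, leaving $\sigma^2\trace[(WW^\top)^{-1}]$, which converges to $\sigma^2/(\phi\psi^{-1}-1)$ by Marchenko--Pastur. In the underparameterized regime the same substitution gives $\sigma^2\trace[(W^\top\Sigma_S W)^{-1} W^\top\Sigma_S^2 W(W^\top\Sigma_S W)^{-1}]$ with $\Sigma_S := SS^\top$; this must be evaluated via the separable-covariance Stieltjes equations of \cite{zhang2007spectral,el2009concentration,Knowles2017}, which couple the MP law of $\Sigma_S$ (ratio $\psi$) with the MP law of $W^\top W$ (ratio $\phi/\psi$), and solving the resulting fixed-point system produces the sum $\sigma^2\phi/(1-\phi)+\sigma^2\phi\psi^{-1}/(1-\phi\psi^{-1})$. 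The main obstacle I anticipate is disentangling these two coupled spectra cleanly enough to isolate the extra $\sigma^2\phi/(1-\phi)$ term; almost-sure (rather than in-probability) convergence is then obtained from the uniform spectral bounds in Assumption \ref{assumption_sketchingMatrix} together with standard resolvent concentration, while the ridge-to-ridgeless passage $\lambda\to 0^+$ built into the definition of $\hat\beta^S$ justifies replacing pseudoinverses by regularized inverses during the Stieltjes analysis. The corresponding limits for $R_{(\beta,S,X)}(\hat\beta^S;\beta)$ follow at once from the asymptotic equivalence \eqref{two_risk_equal_limit}.
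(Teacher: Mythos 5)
Your route is genuinely different from the paper's: the paper proves the general correlated-features theorems (Theorem \ref{thm:correlated_over}, Theorem \ref{correlated_under_biasvariance}, Corollary \ref{Corollary:under_special_sketching}) and then obtains Theorem \ref{thm:isotropic_limiting_risk} by setting $H = \delta_1$ and solving $c_0=\psi\phi^{-1}-1$, whereas you attack the isotropic case directly. Your bias argument is in fact more elementary and cleaner than the paper's: observing that $(X^\top S^\top SX)^+X^\top S^\top SX$ is an orthogonal projection of rank $\rank(SX)=\min(m,p)$ a.s., so that $B_{(S,X)}=\alpha^2(1-\rank(SX)/p)\to\alpha^2\max\{0,1-\psi\phi^{-1}\}$, avoids the self-consistent equation for $c_0$ entirely. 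The orthogonal-sketching variance via the Marchenko--Pastur moments is correct and is the same calculation the paper performs implicitly through $\tilde c_0=\phi-\psi$. A comparative remark: where you use the distributional substitution $SX\approx(SS^\top)^{1/2}W$ conditionally on $S$, the paper uses the SVD $S=UDV$ with $V$ a partial isometry; the diagonal factor $D$ cancels identically in the formulas for bias and for the overparameterized variance, reducing $\trace[(SXX^\top S^\top)^{-1}SS^\top]$ to $\trace[(VXX^\top V^\top)^{-1}]$ \emph{deterministically}, with no distributional approximation needed. This is strictly cleaner than your substitution, which is an exact distributional identity only when $Z$ is Gaussian; for the general moment conditions of Assumption \ref{assump_model_spec} you would need to invoke a separable-covariance universality result to justify replacing $SZ$ by $(SS^\top)^{1/2}W$ at the level of the LSD, a step you gesture at but do not pin down.

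The genuine gap is the i.i.d.\ underparameterized variance, which is the only place the two regimes behave differently and is the computational heart of part (ii). You correctly reduce it to a separable-covariance quantity and correctly name the tool (the self-consistent Stieltjes equations of \cite{zhang2007spectral}), but you flag "disentangling these two coupled spectra" as an anticipated obstacle rather than carrying it out. The paper's execution is: write $V_{(S,X)}=\sigma^2\trace[(SZZ^\top S^\top)^+SS^\top]$, introduce the resolvent pair $(\tilde m_{1n},\tilde m_{2n})$ of \eqref{definition:appendix_m_tilde}, pass to the self-consistent system \eqref{equations:appendix_under_selfconsistent}, show $\lim_{z\to0^-}z\tilde m_1(z)=\tilde c_0$ with $\tilde c_0$ solving \eqref{equation:correlated_under}, and then, for $B$ the Mar\v cenko--Pastur law with parameter $\psi$, compute $\tilde c_0=-\psi-\phi^2+\phi+\psi\phi$ and evaluate $\psi\int x^2\phi/(\tilde c_0-x\phi)^2\,dB(x)=(\phi-2\phi^2\psi^{-1}+\phi\psi^{-1})/(1-\phi^2\psi^{-1})$ via $s_\psi$ and $s_\psi'$, which plugged into \eqref{equation:correlated_under_variance} yields exactly $\sigma^2[\phi/(1-\phi)+\phi\psi^{-1}/(1-\phi\psi^{-1})]$. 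Without this explicit solve, your proposal establishes the structure of the answer but not the claimed closed form; you would also need the interchange-of-limits and uniform lower bound on $\lambda_{\min}^+$ (Lemmas \ref{lemma:appendix_support}--\ref{lemma:appendix_sigmalambdamin}) to make the ridgeless passage $\lambda\to0^+$ rigorous rather than formal.
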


In the above theorem, we have characterized the limiting risks of sketched ridgeless least square estimators with both orthogonal and \iid sketching. The limiting risks are determined by theoretical risk curves in the underparameterized and overparameterized regimes after sketching, where the regimes are described by $\phi\psi^{-1} < 1$ and $\phi\psi^{-1} > 1$, respectively. We shall simply call these two regimes underparametrized and overparameterized regimes respectively.

Interestingly, orthogonal and \iid sketching exhibit different behaviors in the underparameterized regime, while their limiting risks agree in the overparameterized regime. In the underparameterized regime, taking orthogonal sketching  is strictly better than taking \iid sketching in terms of out-of-sample prediction risks. This difference can be attributed to the distortion of the geometry of the least square regression estimator caused by the non-orthogonality in \iid sketching, as pointed out by \cite{dobriban2018asymptotics}, but for a different risk.  Their risk is the expected version of ours.  By using the dominated convergence theorem, Theorem \ref{thm:isotropic_limiting_risk} can recover their results in the underparameterized case.

Moving to the overparameterized case however, both orthogonal and \iid sketching yield identical limiting risks. Specifically, when $\phi\psi^{-1} > 1$,  the bias term $B_{(S,X)}(\hat\beta^{S};\beta) \overset{{\rm a.s.}}{\to}  \alpha^2 (1-\psi \phi^{-1})$ and the variance term $V_{(S,X)}(\hat\beta^{S}; \beta) \overset{{\rm a.s.}}{\to} {\sigma^2}({\phi\psi^{-1}-1})^{-1}$ hold for both types of sketching.

\begin{figure}[t]
    \centering    \includegraphics[width=\columnwidth]{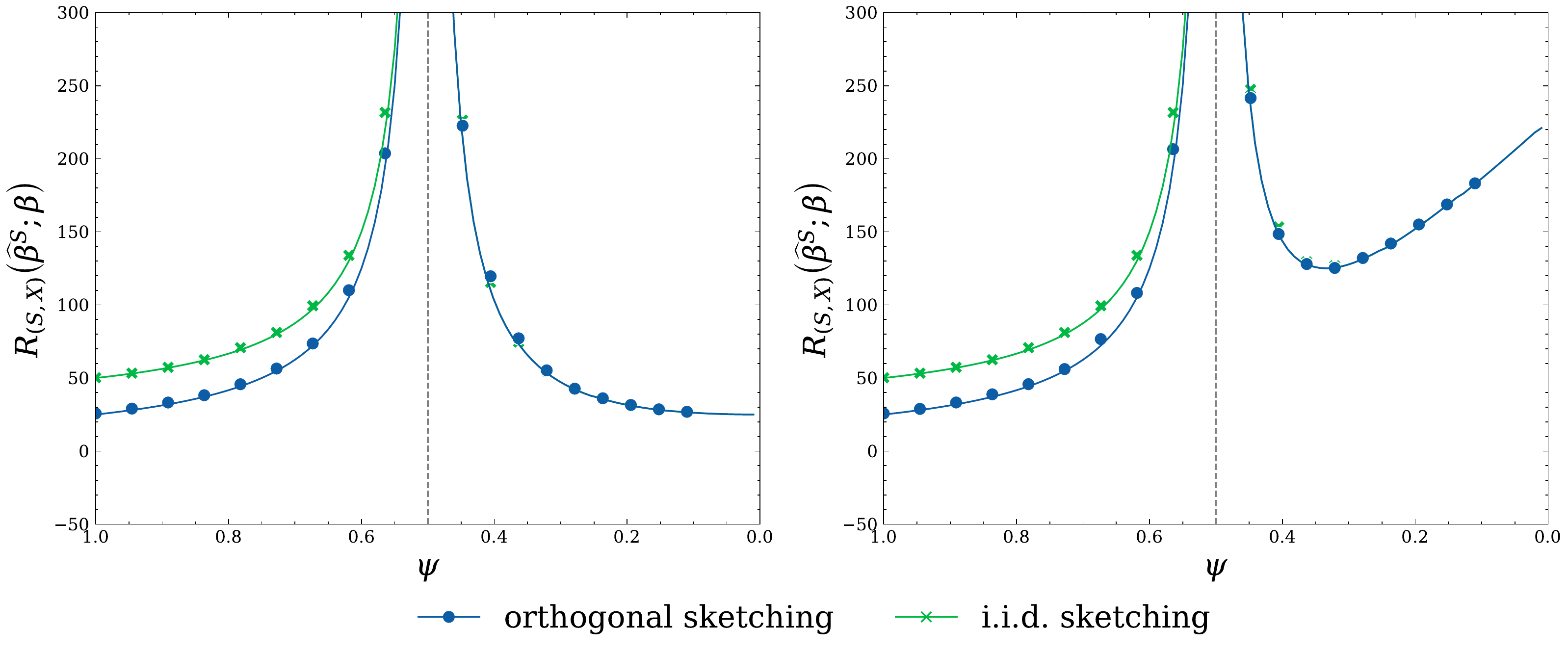}
    \caption{\small 
    Asymptotic risk curves for sketched ridgeless least square estimators with orthogonal and  \iid  sketching under isotropic features, as functions of $\psi$. The  lines in the left panel and right panel are theoretical risk curves for $\SNR = \alpha/\sigma = 1$ with $(\alpha, \sigma)=(5,5)$ and  $\SNR= \alpha/\sigma=3$ with $(\alpha, \sigma)=(15, 5)$, respectively. The blue lines are for orthogonal sketching, while the green lines are for \iid sketching.  The blue dots mark the finite-sample risks for orthogonal sketching, while the green crosses mark the finite-sample risks for \iid sketching, with $n=400$, $p=200$,  $\psi$ varying in $(0,1)$, and $m=[n \psi]$. Each row of  $X\in \RR^{n\times p}$ is \iid drawn from $\mathcal{N}_p (0, I_p)$. 
    The orthogonal sketching matrices are generated using subsampled randomized Hadamard transform, while the entries of the \iid sketching matrices are drawn independently from $\mathcal{N} (0, {1}/{n})$.
    }
    \label{fig:fig_1}
\end{figure}

Let $\SNR= \alpha/\sigma$. Figure \ref{fig:fig_1} plots the asymptotic risk curves as functions of $\psi$,  for sketched ridgeless least square estimators with orthogonal and \iid sketching when $\SNR= \alpha/\sigma = 1, 3$ with  $(\alpha, \sigma) = (5, 5)$ and $(\alpha, \sigma) = (15, 5)$ respectively, along with finite-sample risks. As depicted in the figure, orthogonal sketching is strictly better than \iid sketching in the underparameterized regime, while they are identical  in the overparameterized regime.

Lastly, we compare the limiting risk $R_{(S,X)}(\hat{\beta};\beta)$ of the orthogonally sketched estimator with that of the full-sample estimator,  since orthogonal sketching is universally better than \iid sketching. We can use a variant of \citep[Theorem 1]{hastie2019surprises} to obtain the limiting risk $R_{X}(\hat{\beta};\beta)$ of the full-sample ridgeless least square estimator $\widehat\beta$ with isotropic features:
\$
R_{X}\rbr{\hat{\beta};\beta}  \overset{{\rm a.s.}}{\to}  
\begin{cases}
\dfrac{\sigma^2\phi}{1-\phi},\quad &\phi < 1,\\
    \alpha^2 (1-\phi^{-1})+\dfrac{\sigma^2}{\phi -1},\quad & \phi >1.
\end{cases}
\$
Figure \ref{fig:fig_0} displays the asymptotic risk curves and finite-sample risks of $\hat\beta$. The limiting risk $R_{X}(\hat{\beta};\beta)$ depends on the sample size $n$ and dimensionality $p$ only through the aspect ratio $\phi= \lim p/n$. Comparing this limiting risk with that of the orthogonally sketched estimator in Theorem \ref{thm:isotropic_limiting_risk}, we observe that orthogonal sketching modifies the limiting risk by changing the {\it effective aspect ratio} from $\phi= \lim p/n$ for the original problem to $\phi\psi^{-1} =\lim p/m$ for the sketched problem. This is natural since sketching is a form of downsampling that affects the aspect ratio and, consequently, the limiting risk. Therefore, it is reasonable to ask the following question:
\begin{quote}
\it By carefully choosing the sketching size, can we potentially improve the out-of-sample prediction risks and, consequently, the generalization performance?
\end{quote}
This possibility arises due to the non-monotonicity of the asymptotic risk curves in Figure \ref{fig:fig_0}. In the following subsection, we investigate the optimal sketching size.

\subsection{Optimal sketching size}

In the previous subsection, we discussed the possibility of improving out-of-sample prediction risks and thus generalization performance by carefully choosing the sketching size. We now present the optimal sketching size $m^*$ to minimize the limiting risks for both orthogonal and \iid sketching.

\begin{theorem}[Optimal sketching size for orthogonal and \iid sketching]\label{optimal_sketching_size}
Assume Assumptions \ref{assump_model_spec}, \ref{assumption_random_effect}, \ref{assumption_sketchingMatrix}, and $\Sigma = I_p$.  
The optimal sketching size $m^*$ for both orthogonal and \iid sketching can be determined as follows. 
\begin{itemize}
\item[(a)] If ${\rm SNR}> 1$ and $\phi \in (1-\frac{\sigma}{2\alpha},\frac{\alpha}{\alpha-\sigma}]$,  the optimal sketching size to minimize both limiting risks is $m^*=\frac{\alpha-\sigma}{\alpha}\phi \cdot n$. 
\item[(b)] If  ${\rm SNR}\le 1$ and $\phi \in (\frac{\alpha^2}{\alpha^2+\sigma^2},\infty)$,   taking $\tilde{\beta}=0$ (corresponding to $m^*=0$) yields the optimal solution. 
\item[(c)] 
No sketching is needed if either of the following two holds:
(i) ${\rm SNR}\le 1$ and $\phi \in (0,\frac{\alpha^2}{\alpha^2+\sigma^2}]$, or 
(ii) ${\rm SNR}> 1$ and $\phi \in (0,1-\frac{\sigma}{2\alpha}]\bigcup(\frac{\alpha}{\alpha-\sigma},\infty)$. 
\end{itemize}
\end{theorem}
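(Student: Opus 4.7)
The plan is to reduce the minimization to a one-variable calculus problem on the downsampling ratio $\psi = m/n$. The almost sure limits of the two risks coincide by Lemma~\ref{lemma_bias_variance}, so I would work with the single limit from Theorem~\ref{thm:isotropic_limiting_risk}, augmenting the admissible set $\psi \in (0,1]$ with the artificial endpoint $\psi = 0$ corresponding to $\tilde\beta = 0$ (limiting risk $\alpha^2$). The crucial observation that orthogonal and \iid sketches share the same optimal $m^*$ is immediate from the theorem: the two limits coincide on the overparameterized segment $\psi < \phi$, whereas on the underparameterized segment $\psi > \phi$ (only relevant when $\phi < 1$) both limits are strictly decreasing in $\psi$, differing only by a $\psi$-independent additive term $\sigma^2\phi/(1-\phi)$. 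Hence within the underparameterized regime the minimum is attained at $\psi = 1$ (no sketching) for both types, and the global optimization reduces to comparing three candidates: no sketching, any interior stationary point in the overparameterized regime, and the zero estimator.

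Next I would minimize the common overparameterized expression
\[
R(\psi) \;=\; \alpha^2\bigl(1 - \psi/\phi\bigr) + \frac{\sigma^2 \psi}{\phi - \psi}, \qquad \psi \in \bigl(0, \min(\phi,1)\bigr).
\]
A direct computation gives $R'(\psi) = -\alpha^2/\phi + \sigma^2\phi/(\phi-\psi)^2$ and $R''(\psi) > 0$, so there is a unique stationary point $\psi^* = \phi(\alpha-\sigma)/\alpha$. This $\psi^*$ lies in $(0,1)$ only when $\SNR > 1$ and $\phi < \alpha/(\alpha-\sigma)$, and substitution yields the minimum value $R(\psi^*) = 2\alpha\sigma - \sigma^2$. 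When $\SNR \leq 1$, $R'(0^+) = (\sigma^2 - \alpha^2)/\phi \geq 0$ and $R$ is increasing on $(0,\phi)$, so its infimum $\alpha^2$ is approached only as $\psi \to 0$; this infimum is attained exactly by the zero estimator, which must therefore be carried along as a separate candidate.

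The remaining step is pairwise comparison producing the two threshold inequalities in the statement. When $\SNR > 1$ and $\psi^*$ is feasible, $R(\psi^*) < \alpha^2$ follows from $(\alpha-\sigma)^2 > 0$, while $R(\psi^*) < \sigma^2\phi/(1-\phi)$ rearranges to $\phi > 1 - \sigma/(2\alpha)$, giving case~(a). The complementary subcases $\phi \leq 1 - \sigma/(2\alpha)$ and $\phi > \alpha/(\alpha-\sigma)$ produce case~(c)(ii): for the latter, $\psi^* > 1$ and convexity of $R$ force $\psi = 1$ to be best on $(0,1]$; for the former, the algebraic inequality $1 - \sigma/(2\alpha) < \alpha^2/(\alpha^2+\sigma^2)$ (which reduces to $(\alpha-\sigma)^2 \geq 0$ after clearing denominators) shows that no sketching also beats the zero estimator. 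When $\SNR \leq 1$, the reduced comparison is $\sigma^2\phi/(1-\phi)$ versus $\alpha^2$ with crossover $\phi = \alpha^2/(\alpha^2+\sigma^2)$, producing cases~(b) and~(c)(i); the residual overparameterized no-sketching subcase ($\phi \geq 1$) is covered by noting that $R(1) > \alpha^2$ whenever $\SNR \leq 1$.

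The main obstacle is not analytical depth but the bookkeeping: one must carefully track the admissible range of $\psi$ as $\SNR$ and $\phi$ vary jointly, ensure that each candidate comparison reduces to exactly one of the two explicit threshold inequalities, and treat the zero estimator as an exact feasible candidate at $\psi = 0$ rather than as a limiting value of the sketched risk.
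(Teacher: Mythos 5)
Your proof is correct and follows essentially the same strategy as the paper's: reduce to a one-variable optimization of the common overparameterized risk, locate the interior stationary point (the paper works with $f(x)=\alpha^2(1-x^{-1})+\sigma^2/(x-1)$ in the variable $x=\phi/\psi$, which is the same computation as your $R(\psi)$), note that sketching never helps in the underparameterized regime for either sketch type, and then pairwise-compare no sketching, optimal overparameterized sketching, and the zero estimator to obtain the two threshold inequalities $\phi>1-\sigma/(2\alpha)$ and $\phi>\alpha^2/(\alpha^2+\sigma^2)$. Your write-up is a bit more explicit about the case bookkeeping (e.g.\ the convexity argument for $\phi>\alpha/(\alpha-\sigma)$ and the containment $1-\sigma/(2\alpha)<\alpha^2/(\alpha^2+\sigma^2)$), but the underlying argument is identical.
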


Theorem \ref{optimal_sketching_size} reveals that both orthogonal and \iid sketching can help improve out-of-sample prediction risks in certain cases. Specifically, when the signal-to-noise ratio is large with $\SNR>1$ and the aspect ratio $\phi$ is within the range $(1-\sigma/(2\alpha), \alpha/(\alpha -\sigma )]$, a nontrivial sketching size of $m^*= (\alpha - \sigma)\phi n/\alpha$ leads to the optimal asymptotic risks. On the other hand, when the signal-to-noise ratio is low and the problem dimension is large, the null estimator $\tilde\beta = 0$, which corresponds to $m^*=0$, is the best among all sketched ridgeless least square estimators.

\begin{figure}[t]
    \centering
 \includegraphics[width=\columnwidth]{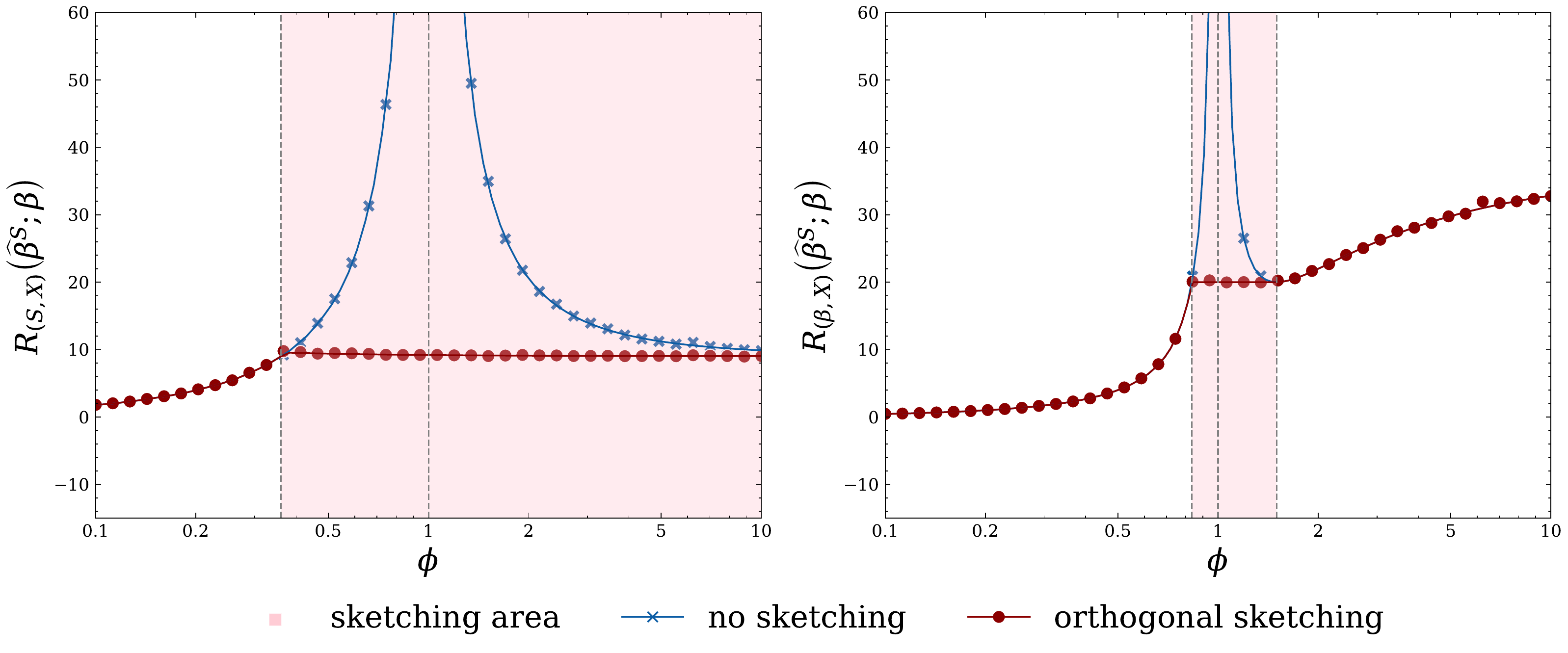}
    \caption{\small 
    Asymptotic risk curves for the full-sample (no sketching) and orthogonally sketched ridgeless least square estimators under isotropic features, as functions of $\phi$. For the sketched estimator, the optimal sketching size $m^*$ is selected based on the SNR and $\phi$, as described in Theorem \ref{optimal_sketching_size}. In the left panel and right panel, the lines  represent the theoretical risk curves for $\SNR = \alpha / \sigma = 0.75$ with $(\alpha, \sigma) = (3, 4)$ and $\SNR = \alpha / \sigma = 3$ with $(\alpha, \sigma) = (6, 2)$, respectively. The blue crosses represent the finite-sample risks for the full-sample estimator, while the red dots indicate the finite-sample risks for the sketched estimator, with  $n=400$, $\phi$ varying in $[0.1, 10]$, and $p=[n \phi]$.   The feature and orthogonal sketching matrices are generated in the same way as in Figure \ref{fig:fig_1}.
    }
    \label{fig:fig_2}
\end{figure}

Figure \ref{fig:fig_2} displays the asymptotic risk curves, as functions of $\phi$,  for the full-sample and optimally sketched ridgeless least square estimators  using orthogonal sketching under isotropic features. As shown in the figure, optimal sketching can stabilize the asymptotic risk curves by eliminating the peaks, indicating that the optimally sketched estimator is a more stable estimator compared to the full-sample one. In Section \ref{sec:prac}, we propose a practical procedure for selecting the optimally sketched estimator.

\section{Correlated features}\label{sec:correlated}

This section considers a general covariance matrix $\Sigma$. 
The results presented here apply to general sketching matrices captured by Assumption \ref{assumption_sketchingMatrix}, including orthogonal and \iid sketching as special cases. We will discuss the overparameterized and underparameterized cases separately.

\subsection{Overparameterized regime}

Recall that $H$ is the limiting spectral distribution (LSD) of $\Sigma$, and $p, m, n \to \infty$ such that $\phi_n=p/n \to \phi$ and $\psi_n =m/n \to \psi \in (0,1)$. In order to analyze the overparameterized case, we need the following lemma.

\begin{lemma}\label{lemma:unique_neg_solution}
    Assume Assumption \ref{assumption_general}.  Suppose  $\phi\psi^{-1}>1$.  Then the following equation \eqref{equation:a} has a unique negative solution with respect to $c_0$,
\begin{align}\label{equation:a}
    1 =  \int \frac{x}{-c_0+x\psi \phi^{-1}} \ dH(x).
\end{align}
\end{lemma}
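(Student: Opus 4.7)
The plan is to define the function $f(c_0) := \int \frac{x}{-c_0 + x\psi\phi^{-1}}\,dH(x)$ on the domain $c_0 \in (-\infty, 0)$, show it is continuous and strictly monotone, compute its two endpoint limits, and then invoke the intermediate value theorem together with strict monotonicity to obtain existence and uniqueness.

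First, I would verify that $f$ is well-defined. By Assumption~\ref{assumption_general}, the eigenvalues of $\Sigma$ lie in $[C_0, C_1]$ with $C_0 > 0$, so the limiting measure $H$ is supported in $[C_0, C_1]$. For any $c_0 < 0$ and $x \in [C_0, C_1]$, the denominator $-c_0 + x\psi\phi^{-1}$ is bounded below by $\min(-c_0,\, C_0\psi\phi^{-1}) > 0$ and above by $-c_0 + C_1 \psi\phi^{-1} < \infty$, so the integrand is a bounded, continuous, positive function of $x$ on the support of $H$. Hence $f$ is finite and, by dominated convergence with these uniform bounds, continuous in $c_0$ on $(-\infty, 0)$.

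Next, strict monotonicity follows by differentiating under the integral sign (justified again by the uniform bounds above on any compact subinterval of $(-\infty, 0)$):
\begin{align*}
f'(c_0) \;=\; \int \frac{x}{(-c_0 + x\psi\phi^{-1})^2}\,dH(x) \;>\; 0,
\end{align*}
since the integrand is strictly positive on a set of positive $H$-measure. Thus $f$ is strictly increasing on $(-\infty, 0)$.

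Finally, I would compute the two boundary limits. As $c_0 \to 0^-$, monotone/dominated convergence gives
\begin{align*}
\lim_{c_0 \to 0^-} f(c_0) \;=\; \int \frac{x}{x\psi\phi^{-1}}\,dH(x) \;=\; \phi\psi^{-1} \;>\; 1,
\end{align*}
using the hypothesis $\phi\psi^{-1} > 1$. As $c_0 \to -\infty$, the integrand is dominated by $C_1/(-c_0)\to 0$ uniformly in $x$, so $f(c_0) \to 0 < 1$. By continuity and the intermediate value theorem there exists $c_0 \in (-\infty, 0)$ with $f(c_0) = 1$, and by strict monotonicity this $c_0$ is unique. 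There is no real obstacle here beyond carefully tracking the positivity of the denominator and invoking the support bounds from Assumption~\ref{assumption_general}; the only subtlety is confirming that the condition $\phi\psi^{-1} > 1$ is precisely what places the value $1$ in the range of $f$ on $(-\infty, 0)$, which is indeed the role of the hypothesis.
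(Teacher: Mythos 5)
Your proof is correct and follows essentially the same approach as the paper: compare the integral's boundary limits at $c_0 \to 0^-$ (equal to $\phi\psi^{-1} > 1$) and $c_0 \to -\infty$ (equal to $0$) to get existence, then use strict monotonicity for uniqueness. The only cosmetic differences are that the paper works with $f(c) = 1 - \int \frac{x}{-c + x\psi\phi^{-1}}\,dH(x)$ rather than the bare integral, and it establishes monotonicity by directly estimating the difference $f(c_1) - f(c_2)$ for two hypothetical roots rather than differentiating under the integral sign; your version with $f'>0$ is an equivalent (and arguably cleaner) route to the same strict monotonicity, and your additional remarks on the support of $H$ and well-definedness of the integral are sound.
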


The above lemma establishes the existence and uniqueness of a negative solution to the equation \eqref{equation:a}. Equations of this type are  known as self-consistent equations \citep{bai2010spectral}, and are fundamental in calculating asymptotic risks. They do not generally admit closed-form solutions but can be solved numerically. To the best of our knowledge, Lemma \ref{lemma:unique_neg_solution}  is not available in the literature. We denote the unique negative solution to \eqref{equation:a} as $c_0 = c_0(\phi, \psi, H)$, which will be used in our subsequent analysis. Our next result characterizes the limiting risks, as well as the limiting biases and variances, in the overparameterized regime. 

\begin{theorem}\label{thm:correlated_over}
    Assume Assumptions \ref{assump_model_spec}-\ref{assumption_sketchingMatrix}. Suppose  
    $\phi \psi^{-1} > 1$. 
    Then the following results hold:
\begin{gather}
B_{(S,X)}(\hat\beta^{S};\beta),\,  B_{(\beta, S,X)}(\hat\beta^{S};\beta) \overset{\as}{\to} -\alpha^2c_0, \label{equation:correlated_over_bias}\\
V_{(S,X)}(\hat\beta^{S};\beta) 
= V_{(\beta, S,X)}(\hat\beta^{S};\beta) 
\overset{\as}{\to}~   \sigma^2\frac{\int \frac{x^2\psi\phi^{-1}}{\rbr{c_0 - x\psi\phi^{-1}}^2}\, dH(x)}{1 -\int \frac{x^2\psi\phi^{-1}}{\rbr{c_0 - x\psi\phi^{-1}}^2}\, dH(x)}. \label{equation:correlated_over_variance}
\end{gather}
Consequently, the limiting risks $R_{(S,X)}(\hat{\beta}^S;\beta)$ and $R_{(\beta,S,X)}(\hat{\beta}^S;\beta)$ converge almost surely to the sum of the right-hand sides of \eqref{equation:correlated_over_bias} and \eqref{equation:correlated_over_variance}.
\end{theorem}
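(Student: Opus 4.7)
The plan is to pass from the pseudoinverse expressions in Lemma \ref{lemma_bias_variance} to ridge-regularized resolvents, invoke separable-covariance results from random matrix theory for the Stieltjes transform of $W := X^{\top} S^{\top} S X / n$, and then send the ridge parameter to zero. First I would use the identity
$$
I_p - (X^{\top} S^{\top} S X)^{+} X^{\top} S^{\top} S X = \lim_{\lambda \to 0^{+}} \lambda (X^{\top} S^{\top} S X + \lambda I_p)^{-1},
$$
which holds because $(A^{\top} A)^{+} A^{\top} A$ is the orthogonal projection onto the row space of $A$. This turns the bias expression in Lemma \ref{lemma_bias_variance} into $B_{(S,X)}(\hat{\beta}^{S};\beta) = \alpha^{2}\lim_{\lambda \to 0^{+}} p^{-1} \trace[\lambda (W + \lambda I_p)^{-1} \Sigma]$ (after absorbing $1/n$ into $\lambda$). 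For the variance, the range of $X^{\top} S^{\top} S S^{\top} S X$ is contained in the row space of $SX$, so both pseudoinverses can be replaced by $(W + \lambda I_p)^{-1}$ with the $\lambda \to 0^{+}$ limit taken afterwards; this rewrites the variance as a double-resolvent trace amenable to the same machinery.

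Next, using Assumption \ref{assump_model_spec} I would write $X = Z \Sigma^{1/2}$, so that conditional on $S$ the matrix $W$ is of the separable form $n^{-1}\Sigma^{1/2} Z^{\top} (S^{\top} S) Z \Sigma^{1/2}$ with deterministic weight $S^{\top} S$ whose ESD converges to $\psi B + (1-\psi)\delta_{0}$. The deterministic-equivalent results of Zhang (2007), El Karoui (2009), and Knowles--Yin (2017) then show that $m_{n}(z) := p^{-1} \trace[(W - z I_p)^{-1} \Sigma]$ converges almost surely to a limit $m(z)$ characterized by a fixed-point equation in $H$, $B$, $\phi$, $\psi$. Evaluating at $z = -\lambda$ and sending $\lambda \to 0^{+}$ in the overparameterized regime $\phi \psi^{-1} > 1$, the atom at $0$ in the spectrum of $W$ (of mass $1 - \psi/\phi > 0$) dominates, and the equation collapses to one involving only $H$, $\phi$, $\psi$; the limit of $\lambda\, m_{n}(-\lambda)$ is exactly $-c_{0}$, where $c_{0}$ is the unique negative root of \eqref{equation:a} guaranteed by Lemma \ref{lemma:unique_neg_solution}. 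This yields \eqref{equation:correlated_over_bias}.

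For the variance, I would extract the double-resolvent trace $\trace[(W + \lambda I)^{-1}\, n^{-1} X^{\top} S^{\top} S S^{\top} S X\, (W + \lambda I)^{-1} \Sigma]$ by implicit differentiation of the self-consistent equation -- for instance, by perturbing $W$ with a parameter $\mu$ multiplying $n^{-1} X^{\top} S^{\top} S S^{\top} S X$ and reading off $-\partial_{\mu}$ at $\mu = 0$ -- or alternatively by a direct Bai--Silverstein deterministic-equivalent argument for bilinear forms. After simplification using the value of $c_{0}$ already identified from the bias analysis and taking $\lambda \to 0^{+}$, the double resolvent reduces to \eqref{equation:correlated_over_variance}. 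Almost-sure equality of the two risk limits then follows from Lemma \ref{lemma_bias_variance}. The hard part will be step two: establishing almost-sure convergence of the deterministic equivalent under the joint randomness of $X$ and $S$ (requiring uniform control after conditioning on $S$), and in particular verifying that the self-consistent equation really reduces to \eqref{equation:a} -- i.e.\ that the $B$-dependence drops out -- in the overparameterized regime. This collapse is a universality phenomenon specific to $\phi \psi^{-1} > 1$ and is precisely what makes orthogonal and i.i.d.\ sketching asymptotically equivalent here.
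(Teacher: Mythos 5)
Your high-level plan (ridge regularization, separable-covariance Stieltjes transforms, limit interchange, and Lemma~\ref{lemma:unique_neg_solution} to identify $c_0$) matches the paper's route, but there is a genuine gap at exactly the step you flag as ``the hard part.'' You predict that the $B$-dependence ``drops out'' of the self-consistent equation in the regime $\phi\psi^{-1}>1$ and call this a universality phenomenon to be verified; that verification is missing from your argument. The paper makes this step trivial by a purely algebraic observation that your proposal does not contain: write the SVD $S = UDV$ (with $V$ partial orthogonal and $D$ carrying the singular values) and note that
\begin{align*}
(X^\top S^\top SX)^+ X^\top S^\top SX = X^\top V^\top (VXX^\top V^\top)^{-1} VX,
\end{align*}
and, more strikingly, for the variance the middle factor $SS^\top = UD^2U^\top$ cancels exactly with the $D^{-1}$'s coming from $(SXX^\top S^\top)^{-1}$, collapsing
\begin{align*}
(X^\top S^\top SX)^+ X^\top S^\top SS^\top SX (X^\top S^\top SX)^+ = \bigl(X^\top V^\top VX\bigr)^+.
\end{align*}
Thus both bias and variance depend on $S$ only through its row space (equivalently, only through $V$), so the singular-value distribution $B$ never enters. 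The invertibility of $D$ needed to make these manipulations legitimate is exactly what Assumption~\ref{assumption_sketchingMatrix} plus Lemma~\ref{lemma:appendix_support} deliver.

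This has a second consequence for your variance step. You propose a double-resolvent object $\trace\bigl[(W+\lambda I)^{-1}\, n^{-1}X^\top S^\top S S^\top S X\, (W+\lambda I)^{-1}\Sigma\bigr]$ together with an auxiliary $\mu$-perturbation of the self-consistent equation. The SVD cancellation shows the variance is actually a \emph{single} resolvent $\frac{\sigma^2}{p}\trace\bigl[(\tfrac1p X^\top V^\top VX)^+\Sigma\bigr]$, and the paper extracts its limit by differentiating the single Stieltjes transform in $z$ (via Vitali's theorem), not by introducing a second perturbation parameter. Your perturbation route could in principle be made to work, but it would still leave you with the burden of showing the $B$-dependence cancels after a messier computation, whereas the algebraic reduction makes it manifest before random matrix theory is even invoked. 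Finally, to justify the limit interchange $n\to\infty$ vs.\ $\lambda\to 0^+$, you need uniform lower bounds on the smallest nonzero eigenvalue of $\tfrac1p X^\top V^\top VX$ (Lemmas~\ref{lemma:appendix_support}--\ref{lemma:appendix_sigmalambdamin} in the paper); you gesture at ``uniform control after conditioning on $S$'' but do not supply it, and this is a nontrivial ingredient needed for Arzela--Ascoli/Moore--Osgood.
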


Different from the case with isotropic features, the asymptotic risk in the presence of correlated features does not admit closed-form solutions. However, it can be computed numerically. When $\Sigma = I_p$, the limiting spectral distribution $H$ degenerates to the Dirac measure $\delta_{{1}}$. In this case, we can show  $c_0 = \psi\phi^{-1} - 1$, $B_{(S,X)}(\hat\beta^{S};\beta) \to \alpha^2 (1-\psi \phi^{-1})$,  and $V_{(S,X)}(\hat\beta^{S}; \beta) \to \frac{\sigma^2}{\phi\psi^{-1}-1}$. These results recover  Theorem \ref{thm:isotropic_limiting_risk} for the case of isotropic features.
Furthermore, in the overparameterized regime, the limiting risks do not depend on a specific sketching matrix. This generalizes the same phenomenon observed in Theorem \ref{thm:isotropic_limiting_risk} for isotropic features.

\subsection{Underparameterized regime}

Recall from Assumption~\ref{assumption_sketchingMatrix} that $B$ is the limiting spectral distribution of $SS^\top$. We define $\tilde{c}_0 = \tilde{c}_0(\phi,\psi,B)$ as the unique negative solution to the self-consistent equation:
\begin{align}\label{equation:correlated_under}
1 = \psi \int \frac{x}{-\tilde{c}_0 + x\phi} \ dB(x).
\end{align}
Now we present the results for the limiting risks in the underparameterized regime, as well as the limiting biases and variances.

\begin{theorem}\label{correlated_under_biasvariance}
Assume Assumptions \ref{assump_model_spec}-\ref{assumption_sketchingMatrix}.  Suppose $\phi \psi^{-1} < 1$. Then 
\begin{gather}
B_{(S,X)}(\hat\beta^{S};\beta),\,  B_{(\beta, S,X)}(\hat\beta^{S};\beta) 
\overset{\as}{\to} 0, \label{limit:correlated_under_bias}\\
V_{(S,X)}(\hat\beta^{S};\beta) 
= V_{(\beta, S,X)}(\hat\beta^{S};\beta)
\overset{\as}{\to} \sigma^2\frac{\psi\int \frac{x^2\phi}{\rbr{\tilde{c}_0-x\phi}^2} \, dB(x)}{1-\psi\int \frac{x^2\phi}{\rbr{\tilde{c}_0-x\phi}^2} \, dB(x)}. \label{equation:correlated_under_variance}
\end{gather}
Consequently,  both $R_{(S,X)}(\hat{\beta}^S;\beta)$ and $R_{(\beta,S,X)}(\hat{\beta}^S;\beta)$ converge almost surely to the right hand side of  \eqref{equation:correlated_under_variance}. 
\end{theorem}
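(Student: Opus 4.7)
The plan is to dispose of the bias by direct invertibility, reduce the variance to a $\Sigma$-free trace, use the Silverstein self-consistent equation to characterize the limit, and derive~\eqref{equation:correlated_under_variance} via a derivative trick followed by algebraic manipulation. First I will dispose of the bias. In the regime $\phi\psi^{-1}<1$ we have $m>p$ eventually almost surely; coupled with $\lambda_{\min}(SS^\top)\ge\tilde C_0>0$ (Assumption~\ref{assumption_sketchingMatrix}) and the a.s.\ full column rank of $X$ (from Assumptions~\ref{assump_model_spec} and~\ref{assumption_general}), this makes $X^\top S^\top SX$ invertible a.s., so $(X^\top S^\top SX)^+X^\top S^\top SX=I_p$ and the two bias expressions in Lemma~\ref{lemma_bias_variance} vanish identically, giving~\eqref{limit:correlated_under_bias}. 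The equality $V_{(S,X)}=V_{(\beta,S,X)}$ is already part of Lemma~\ref{lemma_bias_variance}.

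Next I would reduce the variance to a $\Sigma$-free form. Substituting $X=Z\Sigma^{1/2}$ in the variance formula of Lemma~\ref{lemma_bias_variance} and using $S^\top SS^\top S=(S^\top S)^2$ together with the cyclic property of the trace, the factor $\Sigma$ cancels and leaves $V/\sigma^2=\trace[(Z^\top PZ)^{-1}Z^\top P^2Z(Z^\top PZ)^{-1}]$ with $P:=S^\top S$. Diagonalizing $P=U\tilde\Lambda U^\top$, invoking the rotation-invariance of limiting spectral quantities for $Z$ under Assumption~\ref{assump_model_spec}, and discarding the $n-m$ zero eigenvalues of $P$, this becomes
\[
V/\sigma^2=p^{-1}\trace(L^{-1}ML^{-1}),\quad L:=p^{-1}W^\top\Lambda W,\quad M:=p^{-1}W^\top\Lambda^2 W,
\]
with $W\in\RR^{m\times p}$ i.i.d.\ standardized and $\Lambda\in\RR^{m\times m}$ diagonal, whose ESD converges to $B$. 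Since $p/m\to\phi/\psi<1$ and $\Lambda\ge\tilde C_0I$, the separable-covariance theorems of \cite{zhang2007spectral,el2009concentration,Knowles2017} deliver the a.s.\ convergence $p^{-1}\trace(L^{-1})\to\tilde m$ for some $\tilde m>0$ bounded, and setting $\tilde c_0:=-\phi/\tilde m$ the limit is characterized precisely by~\eqref{equation:correlated_under}, with uniqueness of the negative solution following from monotonicity of the right-hand side in $\tilde c_0$.

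For the variance itself, I would use $L^{-1}ML^{-1}=-\tfrac{d}{ds}(L+sM)^{-1}|_{s=0}$ and observe that $L(s):=L+sM=p^{-1}W^\top\Lambda(I+s\Lambda)W$ remains in the separable framework with LSD $B_s$, the pushforward of $B$ under $t\mapsto t(1+st)$. The $s$-analog of~\eqref{equation:correlated_under}, after changing variables $u=t(1+st)$, reads
\[
1=\psi\int\frac{t(1+st)}{-\tilde c_0(s)+t(1+st)\phi}\,dB(t),
\]
and implicit differentiation at $s=0$ yields
\[
\tilde c_0'(0)\int\frac{t}{(\tilde c_0-t\phi)^2}\,dB(t)=\tilde c_0\int\frac{t^2}{(\tilde c_0-t\phi)^2}\,dB(t).
\]
Chain-ruling via $\tilde c_0(s)=-\phi/m_s$ together with $V/\sigma^2\to-\dot m$ gives $V/\sigma^2\to\tilde m\,J_2/J_1$ where $J_k:=\int t^k(\tilde c_0-t\phi)^{-2}\,dB(t)$. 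The closing identity
\[
J_1+\tilde m J_2=-\tilde c_0^{-1}\!\int\frac{t}{t\phi-\tilde c_0}\,dB(t)=-(\psi\tilde c_0)^{-1},
\]
obtained from $1+\tilde m t=-(t\phi-\tilde c_0)/\tilde c_0$ combined with~\eqref{equation:correlated_under}, rearranges $\tilde m J_2/J_1$ into $\psi\phi J_2/(1-\psi\phi J_2)$, which is exactly~\eqref{equation:correlated_under_variance}.

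The hard part will be justifying the interchange of the large-$n$ limit with the $s$-derivative in the last step. This will require uniform-in-$s$ convergence of $p^{-1}\trace(L(s)^{-1})$ on a neighborhood of $s=0$, which I expect to follow from the uniform control of the spectrum of $\Lambda(I+s\Lambda)$ for small $s$ (Assumption~\ref{assumption_sketchingMatrix}) and the consequent uniform separation of the LSD of $L(s)$ from $0$ on that neighborhood.
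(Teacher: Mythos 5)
Your proof reaches the correct conclusion and the algebra checks out, but it takes a genuinely different route from the paper for the variance limit. The paper writes $(SZZ^\top S^\top)^+$ as a $z\to 0^-$ limit of the resolvent $(\tfrac1n SZZ^\top S^\top - zI_m)^{-1}$, expands via $A(A-zI)^{-2} = (A-zI)^{-1}+z(A-zI)^{-2}$, passes to the self-consistent limit $\tilde m_1(z)$, and extracts the answer from $\lim_{z\to 0^-}\bigl(\tilde m_1(z)+z\tilde m_1'(z)\bigr)$ with the aid of Vitali's theorem for the derivative and Arzel\`a--Ascoli/Moore--Osgood for the interchange of $n\to\infty$ and $z\to 0^-$. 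You instead introduce an $s$-perturbation $L(s)=L+sM$ and recognize $L^{-1}ML^{-1}=-\partial_s L(s)^{-1}\big|_{s=0}$, staying entirely in the invertible regime and differentiating the self-consistent equation for the perturbed LSD $B_s$ with respect to $s$. Your route avoids the $z\to 0$ boundary limit (the spectrum is already separated from zero) but trades it for uniform-in-$s$ control, which you correctly flag as the remaining step; this is where the paper would invoke the Vitali/Arzel\`a--Ascoli machinery in the $z$ variable, and an exactly analogous argument is needed in $s$.

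Two technical points deserve attention. First, the claim that after diagonalizing $P=S^\top S=U\tilde\Lambda U^\top$ one may replace $U^\top Z$ by an ``i.i.d.\ standardized'' $W$ is \emph{not} justified under Assumption~\ref{assump_model_spec}: $U^\top Z$ has i.i.d.\ rows only when $Z$ is Gaussian. Fortunately the rotation is unnecessary — the separable-covariance theorems of \cite{zhang2007spectral} apply directly to $p^{-1}Z^\top P(I+sP)Z$ with i.i.d.\ $Z$ and deterministic (conditional on $S$) nonnegative weight matrix $P(I+sP)$; the LSD depends on $P$ only through its ESD, so the self-consistent equation you write down in terms of $B_s$ is still the right object. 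You should bypass the diagonalization rather than appeal to rotation invariance. Second, your bias argument is too quick: full row rank of $S$ plus full column rank of $X$ does not by itself imply $SX$ has full column rank (a projection $S$ could annihilate the column space of $X$). The paper's Lemma~\ref{lemma:appendix_support} supplies the needed quantitative lower bound on $\lambda_{\min}^{+}(\tfrac1p Z^\top S^\top SZ)$ from the LSD being separated from zero in the regime $\phi\psi^{-1}<1$; you should cite or reprove that, rather than rely on rank counting alone.
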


\begin{figure}[t]
    \centering
    \includegraphics[width=\columnwidth]{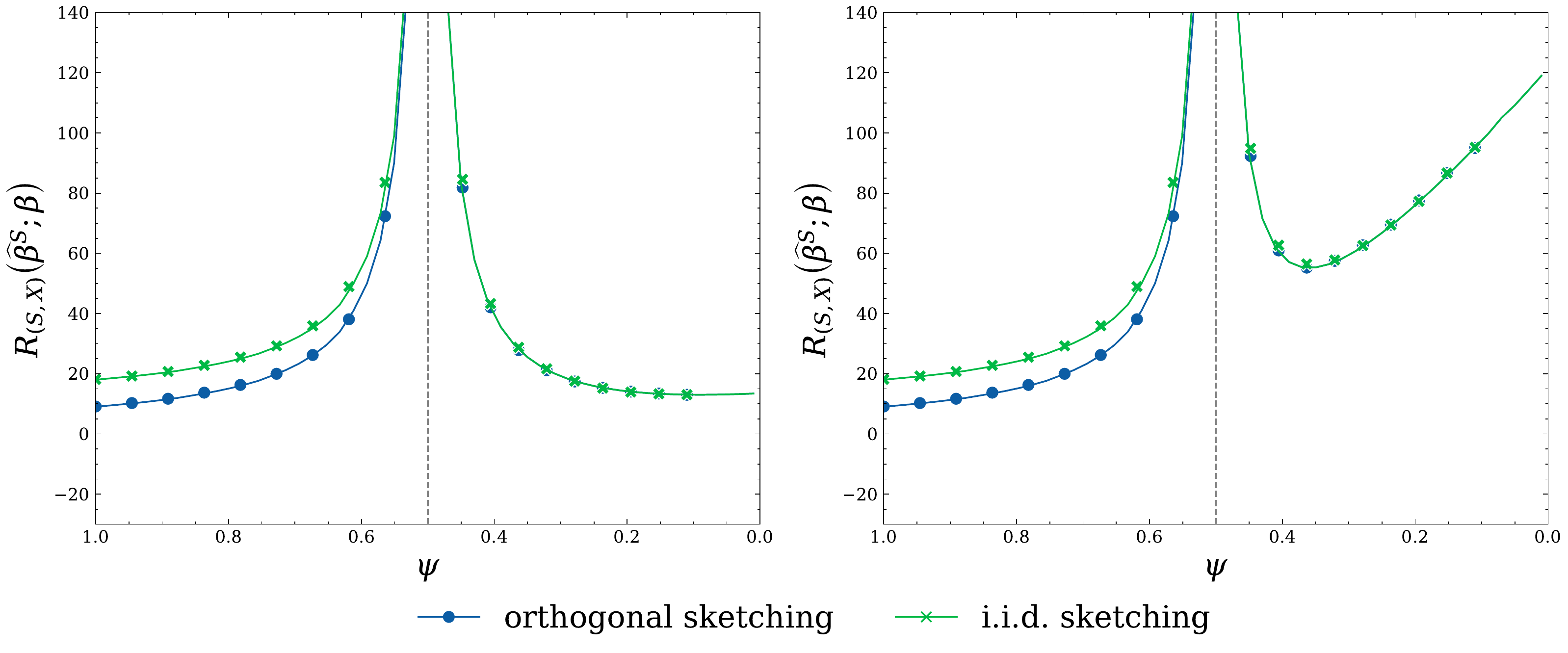}
    \caption{\small 
    Asymptotic risk curves  for  sketched ridgeless least square estimators with orthogonal and \iid sketching under correlated features, as functions of $\psi$. The lines in the left panel and the right panel are theoretical risk curves for $\SNR = \alpha/\sigma = 1$ with $(\alpha, \sigma)= (3,3)$ and $\SNR = \alpha/\sigma = 3$ with $(\alpha, \sigma) = (9, 3)$, respectively. The blue dots mark the finite-sample risks for orthogonal sketching, while the green crosses mark the risks for \iid sketching, with $n=400$, $p=200$, $\psi$ varying in $(0,1)$, and $m=[n \psi]$. Each row of  $X\in \RR^{n\times p}$ is \iid drawn from $\mathcal{N}_p (0, \Sigma)$ and $\Sigma$ has empirical spectral distribution $F^\Sigma(x) = \frac{1}{p}\sum_{i=1}^p \mathbf{1}_{[\lambda_i(\Sigma),\infty)}(x)$ 
    with $\lambda_i = 2$ for $i = 1, \dots, [p/2]$, and $\lambda_i = 1$ for $i = [p/2] + 1, \dots, p$. The orthogonal and \iid sketching matrices are generated in the same way as in Figure \ref{fig:fig_1}. 
    }\label{fig:fig_3}
\end{figure}

In the underparameterized case, the biases vanish, and the variances depend on the sketching matrix $S$ and are independent of the covariance matrix $\Sigma$. The following corollary presents the limiting variances for orthogonal and \text{i.i.d.} sketching.

\begin{corollary}\label{Corollary:under_special_sketching}
    Assume the same assumptions as in Theorem \ref{correlated_under_biasvariance}. The following hold. 
\begin{enumerate}
    \item[(i)] If $S$ is an orthogonal sketching matrix, then
    \begin{align}
        V_{(S,X)}(\hat\beta^{S};\beta) &= V_{(\beta, S,X)}(\hat\beta^{S};\beta) \overset{\as}{\to}\, \sigma^2\frac{\phi\psi^{-1}}{1-\phi\psi^{-1}}.
    \end{align}
    \item[(ii)] If $S$ is an \text{i.i.d.} sketching matrix, then
    \begin{align}
        V_{(S,X)}(\hat\beta^{S};\beta) = V_{(\beta, S,X)}(\hat\beta^{S};\beta) \overset{\as}{\to}\, \sigma^2\rbr{\frac{\phi}{1-\phi}+\frac{\phi\psi^{-1}}{1-\phi\psi^{-1}} }.
    \end{align}
\end{enumerate}
\end{corollary}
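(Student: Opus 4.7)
The plan is to apply Theorem~\ref{correlated_under_biasvariance} directly, specializing the formula \eqref{equation:correlated_under_variance} for each sketching type by identifying the limiting spectral distribution $B$ of $SS^\top$, solving the self-consistent equation \eqref{equation:correlated_under} for $\tilde c_0$, and then evaluating the resulting integral.

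For part~(i), an orthogonal sketching matrix satisfies $SS^\top = I_m$, so $B = \delta_1$. The self-consistent equation collapses to $1 = \psi/(-\tilde c_0 + \phi)$, immediately yielding $\tilde c_0 = \phi - \psi$, which is negative because $\phi\psi^{-1} < 1$. Substituting into \eqref{equation:correlated_under_variance} gives $\psi\int x^2\phi/(\tilde c_0 - x\phi)^2\, dB(x) = \psi \cdot \phi/\psi^2 = \phi\psi^{-1}$, so the limiting variance equals $\sigma^2\phi\psi^{-1}/(1-\phi\psi^{-1})$.

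For part~(ii), the Marchenko--Pastur theorem identifies $B$ as the Marchenko--Pastur law MP$(\psi)$, with Stieltjes transform $m_B(z) = \int (x-z)^{-1}\, dB(x)$ satisfying the quadratic $\psi z m_B(z)^2 + (\psi + z - 1)m_B(z) + 1 = 0$. Writing $t = -\tilde c_0/\phi > 0$ and $g(t) = m_B(-t)$, equation \eqref{equation:correlated_under} rearranges to $g(t) = (\psi - \phi)/(\psi t)$. Plugging this into the MP quadratic at $z = -t$ reduces the quadratic to a linear equation for $t$ whose solution is $t = (1-\phi)(\psi - \phi)/\phi$, and hence $\tilde c_0 = -(1-\phi)(\psi-\phi)$. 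To handle the integral in \eqref{equation:correlated_under_variance}, I will decompose $x^2/(x+t)^2 = 1 - 2t/(x+t) + t^2/(x+t)^2$ and compute $\int(x+t)^{-2}\, dB(x) = -g'(t)$ by implicitly differentiating the MP quadratic and evaluating at the above $t$. Collecting terms yields $T := \psi\int x^2\phi/(\tilde c_0 - x\phi)^2\, dB(x) = \phi(1+\psi - 2\phi)/(\psi - \phi^2)$, and a short algebraic simplification gives $T/(1-T) = \phi/(1-\phi) + \phi\psi^{-1}/(1-\phi\psi^{-1})$.

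The main technical obstacle lies entirely in part~(ii): evaluating the second-moment integral $\int(x+t)^{-2}\, dB(x)$ in closed form. The cleanest route is to express both $g(t)$ and $g'(t)$ via the MP quadratic and its derivative, which converts the task into algebraic manipulation. A shortcut alternative is to invoke Theorem~\ref{thm:isotropic_limiting_risk}(ii) for $\Sigma = I_p$ together with the fact, visible from Theorem~\ref{correlated_under_biasvariance}, that the underparameterized variance limit does not depend on $\Sigma$; this bypasses the MP computation but relies on the isotropic result having been obtained independently.
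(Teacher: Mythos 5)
Your proposal is correct and follows essentially the same route as the paper: identify $B$ as $\delta_1$ or the Marchenko--Pastur law, solve the self-consistent equation \eqref{equation:correlated_under} for $\tilde c_0$, and evaluate $\psi\int x^2\phi(\tilde c_0-x\phi)^{-2}\,dB(x)$ via the Stieltjes transform of $B$ and its derivative. The only cosmetic difference is that you extract $g(t)$ and $g'(t)$ from the MP quadratic functional equation via implicit differentiation, whereas the paper plugs into the explicit square-root formula for $s_\psi(z)$; both give $\tilde c_0=-(1-\phi)(\psi-\phi)$ and the same $T/(1-T)$, and your warning that the ``shortcut'' via Theorem~\ref{thm:isotropic_limiting_risk} would be circular is well founded, since the paper proves the isotropic underparameterized case \emph{from} this corollary.
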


The corollary above once again confirms that taking \text{i.i.d.} sketching yields  a larger limiting variance compared to taking orthogonal sketching, extending the corresponding results for isotropic features. This naturally raises the question: 
\begin{quote}
\it Is orthogonal sketching matrix optimal among all sketching matrices?
\end{quote}
We provide a positive answer to this question by utilizing the variance formula \eqref{equation:correlated_under_variance}. Specifically, the following result demonstrates that the Dirac measure, which corresponds to orthogonal sketching, minimizes the variance formula \eqref{equation:correlated_under_variance} and therefore minimizes the limiting risks.

\begin{corollary}[Optimal sketching matrix]\label{coro:optimalsketchingmatrix} 
Taking $B = \delta_{{a}}$ with some $a>0$, which corresponds to orthogonal sketching, minimizes the limiting variance \eqref{equation:correlated_under_variance}, and therefore minimizes the limiting risks, among all choices of $B$ supported on the positive real line $\RR_{>0}$.
\end{corollary}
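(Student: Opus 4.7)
\textbf{Proof proposal for Corollary~\ref{coro:optimalsketchingmatrix}.}
The plan is to reduce the minimization to a clean Cauchy--Schwarz inequality after a change of variable. First I would set $t := -\tilde{c}_{0} > 0$, so that the self-consistent equation~\eqref{equation:correlated_under} reads
\begin{equation*}
\int \frac{x}{t + x\phi}\,dB(x) \;=\; \frac{1}{\psi},
\end{equation*}
and the quantity appearing in the limiting variance~\eqref{equation:correlated_under_variance} becomes
\begin{equation*}
u(B) \;:=\; \psi\phi \int \frac{x^{2}}{(t + x\phi)^{2}}\,dB(x).
\end{equation*}
Since the limiting variance equals $\sigma^{2} u/(1-u)$ and $u \mapsto u/(1-u)$ is strictly increasing on $[0,1)$, minimizing the variance over $B$ is equivalent to minimizing $u(B)$.

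The core step is a direct application of Cauchy--Schwarz in $L^{2}(B)$ to the pair $\bigl(1,\, x/(t+x\phi)\bigr)$:
\begin{equation*}
\int \frac{x^{2}}{(t+x\phi)^{2}}\,dB(x) \;\geq\; \left(\int \frac{x}{t+x\phi}\,dB(x)\right)^{\!2} \;=\; \frac{1}{\psi^{2}},
\end{equation*}
which, after multiplying by $\psi\phi$, becomes $u(B) \geq \phi/\psi$. Equality requires the function $x \mapsto x/(t+x\phi)$ to be constant $B$-a.e. Its derivative $t/(t+x\phi)^{2}$ is strictly positive on $(0,\infty)$, so the function is strictly increasing and hence injective; constancy $B$-a.e.\ therefore forces $B$ to be a Dirac measure. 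Conversely, for $B = \delta_{a}$ with any $a > 0$ the constraint yields $t = a(\psi - \phi) > 0$ in the underparameterized regime $\phi < \psi$, and a direct substitution gives $u(\delta_{a}) = \phi/\psi$. Every Dirac measure on $\RR_{>0}$ therefore attains the lower bound, and the attained minimum is independent of the atom location $a$, which is consistent with the scale-invariance of the sketched ridgeless estimator under $S \mapsto \sqrt{a}\,S$.

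To make the monotonicity step of the first paragraph legitimate, I would verify $u(B) < 1$ for every admissible $B$. The pointwise strict inequality $\bigl(x\phi/(t+x\phi)\bigr)^{2} < x\phi/(t+x\phi)$ for $x,t>0$, integrated against $B$ and combined with the constraint, gives $\phi^{2} \int x^{2}/(t+x\phi)^{2}\,dB(x) < \phi/\psi$, i.e., $u(B) < 1$. I do not anticipate a serious obstacle. The only bookkeeping subtlety is translating the expression in $\tilde{c}_{0}$ into a form amenable to $L^{2}(B)$ Cauchy--Schwarz; once that is done, both the inequality and the equality case follow at once from the strict monotonicity of $x \mapsto x/(t+x\phi)$, delivering the corollary.
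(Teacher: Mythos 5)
Your proof is correct and uses the same core idea as the paper: after substituting the self-consistent constraint, the numerator of the limiting variance is bounded below by a Cauchy--Schwarz inequality whose equality case forces a Dirac measure. The paper first rewrites the numerator via the Stieltjes transform as $\psi\phi^{-1}\bigl(1 + 2ts_B(t) + t^2 s_B'(t)\bigr)$ with $t = \tilde c_0\phi^{-1}$, observes that $ts_B(t)$ is pinned by the constraint, and then applies Cauchy--Schwarz to the remaining term $t^2 s_B'(t) = \int t^2/(x-t)^2\,dB(x)$; you instead apply Cauchy--Schwarz directly to $\int x^2/(t+x\phi)^2\,dB(x)$ against the constraint $\int x/(t+x\phi)\,dB(x)=1/\psi$, which is an affine repackaging of the same inequality. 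Your write-up is, if anything, a bit more careful on two small points the paper glosses over: you explicitly characterize the equality case of Cauchy--Schwarz (the integrand is strictly monotone in $x$, so constancy $B$-a.e.\ forces a point mass) rather than just asserting the minimizer, and you verify $u(B)<1$ so that the map $u \mapsto u/(1-u)$ is indeed order-preserving on the admissible range.
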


Figure \ref{fig:fig_3} displays the asymptotic risk curves of sketched ridgeless least square estimators with orthogonal or \iid sketching, under correlated features, as functions of $\psi$. The figure highlights that, when considering a general feature covariance matrix $\Sigma$, employing orthogonal sketching outperforms \iid sketching in the underparameterized regime. However,  both approaches yield identical limiting risks in the overparameterized regime. Furthermore, Figure \ref{fig:fig_4} compares the full-sample and sketched least square estimators. It demonstrates that optimal orthogonal and \iid sketching techniques can enhance the stability of the risk curve by eliminating the peaks observed in the risk curves for the full-sample estimator.

\begin{figure}[t]
    \centering
    \includegraphics[width=\columnwidth]{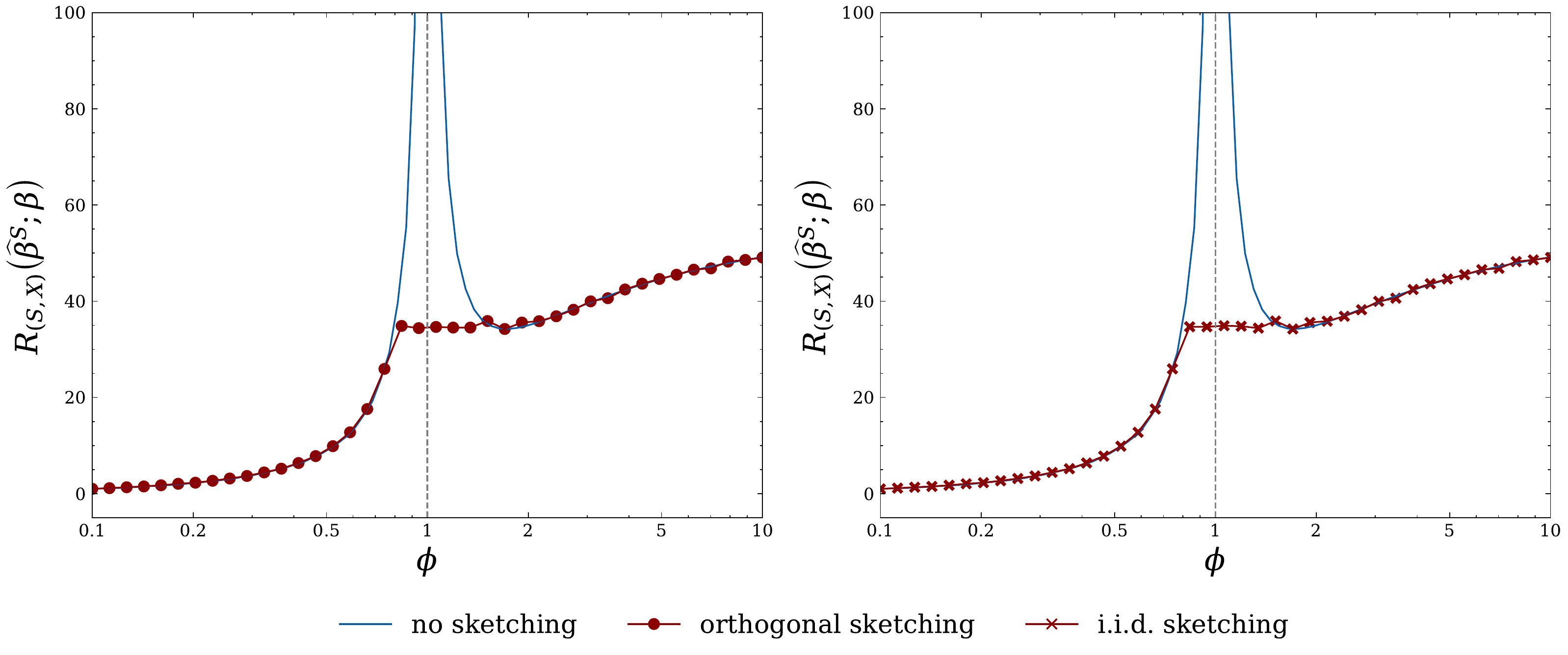}
    \caption{\small 
    Asymptotic risk curves for the full-sample (no sketching) and sketched ridgeless least square estimators with  orthogonal or \iid sketching under correlated features, as functions of $\phi$. For the sketched estimator, the optimal sketching size $m^*$ is selected based on theoretical risk curves, as described in Appendix \ref{appendix:numerical_sims}. The blue lines are the theoretical risk curves for the full-sample estimator with $\SNR = \alpha/\sigma = 2$, where $(\alpha, \sigma) = (6, 3)$. The red dots and crosses mark the finite-sample risks of the orthogonally and \iid sketched estimators, respectively, with  $n = 400$, $\phi$ varying in $[0.1, 10]$, and $p = [n \phi]$. The feature matrix, orthogonal sketching matrices, and \iid sketching matrices are generated in  the same way as in Figure \ref{fig:fig_3}.
    }
    \label{fig:fig_4}
\end{figure}

\begin{figure}[h]
    \centering
    \includegraphics[width=\columnwidth]{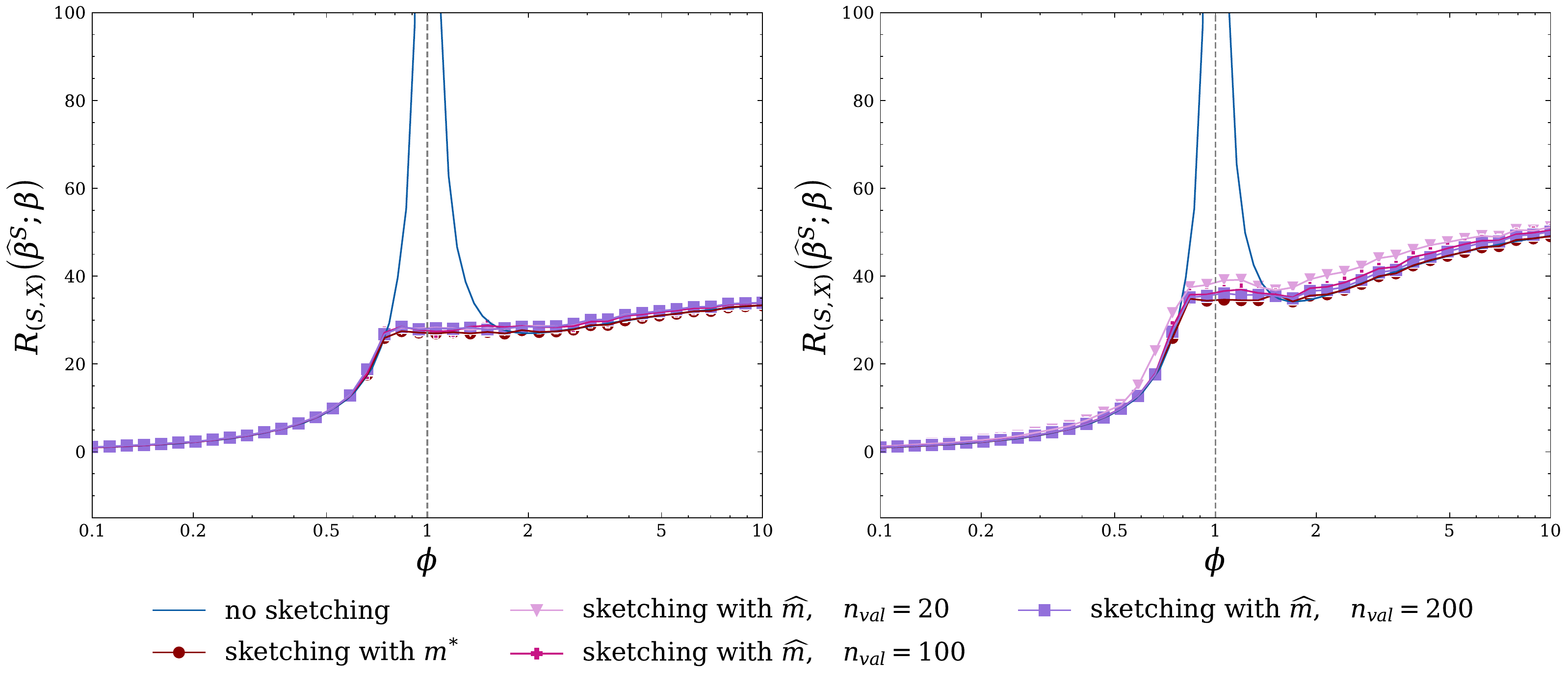}
    \caption{\small 
    Asymptotic risk curves for the full-sample (no sketching) and sketched ridgeless least square estimators with orthogonal sketching under isotropic and correlated features, respectively, as functions of $\phi$. 
    The blue lines in the left panel and the right panel are theoretical risk curves for the full-sample estimator under isotropic features and correlated features, respectively.  For both figures, we set $\SNR = \alpha/\sigma = 2$ with $(\alpha, \sigma) = (6, 3)$. The red dots mark finite-sample risks of the sketched estimator with the theoretically optimal sketching size  ${m}^*$, while the plum triangles, pink diamonds, and  purple squares mark finite-sample risks of sketched estimators with the empirically optimal sketching size $\widehat{m}$ determined  using the validation datasets of sizes $n_{\text{val}} = 20$, $n_{\text{val}} = 100$, $n_{\text{val}} = 200$, where $n = 400$, $\phi$ varies in $[0.1,10]$, and $p=[n \phi]$. The feature matrix and orthogonal sketching matrices are generated in the same way as in Figure \ref{fig:fig_3}. 
    }
    \label{fig:fig_6}
\end{figure}

\section{A practical procedure}\label{sec:prac}

Determining the optimal sketching size based on the theoretical risk curves requires the knowledge of SNR, which is often unknown in practice. Therefore, estimating the optimal sketching size $m^*$ would require new and potentially complicated methodologies for estimating the SNR, which is beyond the scope of this work.

In this section, we present a simple yet practical procedure to pick the best possible sketching size when we have access to an additional validation dataset. This is not very restrictive, especially in applications with large and streaming data where a validation dataset can be easily obtained. Alternatively, we can manually split the dataset into two parts: a training dataset and a validation dataset. The training dataset is used to obtain the sketched estimators, while the validation dataset is used to select the best sketching size. Finally, the test risk $R_{(S,X)}(\hat{\beta}^S;\beta)$ can be evaluated on the testing dataset using the tuned sketched least square estimator.

To evaluate the performance of this procedure, we conducted numerical studies with 500 replications. For each replication, we generated $\beta \sim \mathcal{N}_p(0, \frac{\alpha^2}{p} I_p)$ and created a training dataset $(X, Y)$ with $n = 400$ training samples, a validation dataset $\{(x_{\text{val}, i}, y_{\text{val}, i}): 1\leq i\leq n_{\text{val}}\}$ with  $n_{\text{val}} \in \{20, 100, 200\}$ validation samples, and a testing dataset $\{(x_{\text{new}, i}, y_{\text{new}, i}): 1\leq i \leq n_{\new}\}$ with  $n_{\new} = 100$ testing samples. The feature matrix $X \in \mathbb{R}^{n \times p}$, orthogonal sketching, and \iid sketching matrices were generated in the same way as in Figure \ref{fig:fig_3} and were fixed across all replications.

Next, we provide details on how the sketching size was selected in each replication and how the empirical out-of-sample prediction risks were calculated.

\paragraph{Selection of the optimal sketching size.} The empirically optimal sketching size $\widehat m$ was selected if it minimized the empirical risk across a set of values for $m$ evaluated on the validation dataset. Specifically, given fixed $p$ and $n$, we varied $\psi$ by taking a grid of $\psi \in (0, 1)$ with $|\psi_i - \psi_{i+1}| = \delta$ for  $\delta = 0.05$. This led to a set of potential values for $\widehat m$, i.e., $m_i = [\psi_i n]$.  For each $m_i$, we fitted a sketched ridgeless least square estimator $\widehat{\beta}^{{S_{m_i}}}$ using the training dataset and calculated the empirical risks on the validation dataset: 
\begin{equation}\label{equation:empirical_risk_val}
    \widehat{R}^{\text{val}}_{(S_{m_i}, X)} \left(\widehat{\beta}^{S_{m_i}} ; \beta\right)=\frac{1}{n_{\text {val}}} \sum_{i=1}^{n_{\text {val}}}\left(x_{\text {val}, i}^{\top}\, \widehat{\beta}^{S_{m_i}}-x_{\text {val}, i}^{\top}\,  \beta\right)^2.
\end{equation}
The empirical optimal sketching size $\widehat m$ was picked as the one that minimized the empirical risks across all $m_i$.

We briefly discuss the computational cost of using a validation set. For a given $m$, suppose the computational complexity of orthogonal sketching is  $C(np\log m + m p^2)$ where $C$ is a constant. When $m$ varies with $\abr{m_i-m_{i+1}}=[\delta n]$, the total computational complexity would be $\sum_{i}C(np\log m_i + m_i p^2) \sim C(\frac{1}{\delta}np\log n + \frac{1}{2\delta}np^2)$ where $a_n \sim b_n$ means $\lim_n {a_n}/{b_n}=1$. We compare it with ridge regression which also requires a validation set (or CV) to tune the parameter and should have a computational complexity of $C(\frac{1}{\delta}np^2)$. Although they both have the same order, we can still see sketching reduces almost half of the computational cost and the improvement would be significant especially when $p$ is large. 

\paragraph{Evaluation of the out-of-sample prediction performance.} In the $k$-th replication, we first generate the coefficient vector $\beta(k)$ if the empirically best sketching size was  $\widehat m(k) = n$, we fitted a ridgeless least square estimator $\widehat{\beta}(k)$ on the training set; if $\widehat m(k)  < n$, we fitted a sketched ridgeless least square estimator with the selected $\widehat m (k)$. Denote this final estimator by $\widehat \beta(k)^{S_{\widehat m(k)}}$. The empirical risk of this final estimator was then evaluated on the testing dataset:
\begin{equation}\label{equation:empirical_risk_test}
    \widehat{R}_{(S, X)}\left(\widehat{\beta}^S ; \beta\right)=\frac{1}{500} \sum_{k=1}^{500}\left\{\frac{1}{n_{\text {new}}} \sum_{r=1}^{n_{\text {new}}}\left(x_{\text {new}, r}^{\top} \widehat{\beta}(k)^{S_{\widehat m (k)}}-x_{\text {new}, r}^{\top} \beta(k)\right)^2\right\}.
\end{equation}

Figure \ref{fig:fig_6} plots the asymptotic risk curves for the full-sample and sketched least square estimators with orthogonal sketching, correlated features, and the theoretically and empirically optimal sketching sizes. The performance of the orthogonal sketched estimator with $\widehat{m}$ is comparable to that of sketched estimators with $m^*$ when $n_{\text{val}} = \{20, 100, 200\}$. As the size of the validation dataset increases, the finite-sample risk curve of the orthogonally sketched estimator with $\widehat{m}$ becomes stabler and closer to that of the orthogonally sketched estimator with  $m^*$. Moreover, a particularly small validation dataset with $n_{\text{val}}=20$ already suffices for producing an estimator with a stable and monotone risk curve. 

\section{Extensions}\label{sec:discussion}
\subsection{Deterministic $\beta$ case}

Previously, we assume that the coefficient vector $\beta$ is independent of the data matrix $X$,  and has mean $0$ and covariance  $ p^{-1}{\alpha^2}I_p$ respectively. This section considers deterministic $\beta$  as specified in the following assumption.

\begin{assumption}[Deterministic $\beta$]\label{assumption:nonrandom}
    The coefficient vector $\beta$ is deterministic. 
\end{assumption}


Denote the eigenvalue decomposition of $\Sigma$ by $\Sigma = \sum_{i=1}^p \lambda_iu_i u_i^\top $ where, under  Assumption \ref{assumption_general}, $C_1 \geq \lambda_{1} \geq \lambda_{2} \geq \cdots \geq \lambda_{p} \geq C_0 >0 $. We define the eigenvector empirical spectral distribution (VESD) to be
\begin{align}\label{Def:VESD}
    G_n(x) = \frac{1}{\nbr{\beta}^2}\sum_{i=1}^p \inner{\beta}{u_i}^2\mathbf{1}_{[\lambda_i,\infty)}(x),
\end{align}
where the indicator function $\mathbf{1}_{[\lambda_i,\infty)}(x)$ takes value $1$ if and only if $x\in [\lambda_i,\infty)$. \eqref{Def:VESD} characterizes the relation of $\Sigma$ and $\beta$.  
Theorem \ref{thm:correlated_deterministic} presents the asymptotic risk when $\beta$ is deterministic.  According to Lemma \ref{lemma_bias_variance}, the variance term is exactly the same as in the previous two subsections. Besides,  the bias vanishes in the underparameterized regime.  Thus, the only nontrivial case is the bias for the overparameterized case. Let 
\begin{align}
    c_1:=\frac{\int \frac{ x^2\psi\phi^{-1}}{\rbr{c_0-x\psi\phi^{-1}}^2} \, dH(x)}{1-\int \frac{ x^2\psi\phi^{-1}}{\rbr{c_0-x\psi\phi^{-1}}^2} \, dH(x)},
\end{align}
where $c_0$ is defined in \eqref{equation:a}.
The $c_1$ can be treated as a rescaled limiting variance of the sketched estimator in the overparameterized regime; see \eqref{equation:correlated_over_variance}.

\begin{theorem}\label{thm:correlated_deterministic}
   Assume Assumptions \ref{assump_model_spec}, \ref{assumption_general},  \ref{assumption_sketchingMatrix}, and \ref{assumption:nonrandom}. 
   Then the followings hold. 
   \begin{enumerate}
    \item[(i)] If $p/m \to \phi \psi^{-1} < 1$, 
    \begin{align}
        B_{(\beta, S,X)}(\hat\beta^{S};\beta) \overset{\as}{\to} 0.
    \end{align}
    \item[(ii)] If $p/m \to \phi \psi^{-1} > 1$ 
    and assume the VESD $G_n$ defined in \eqref{Def:VESD} converges weakly to a probability measure $G$, then 
    \begin{align}\label{limit:deterministic_bias2}
        B_{(\beta, S,X)}(\hat\beta^{S};\beta)/\nbr{\beta}^2 \overset{\as}{\to}  \rbr{1+c_1}\int \frac{c_0^2x}{\rbr{c_0-x\psi\phi^{-1}}^2} \, dG(x).
    \end{align}    
    \end{enumerate}
    For the variance term, $V_{(\beta, S,X)}(\hat\beta^{S};\beta)$ converges to the same limit as \eqref{equation:correlated_over_variance} and  \eqref{equation:correlated_under_variance} respectively for the overparameterized and underparameterized cases.
\end{theorem}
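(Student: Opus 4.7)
The variance term requires no new work: by Lemma~\ref{lemma_bias_variance}, $V_{(\beta,S,X)}(\hat\beta^S;\beta)$ does not depend on $\beta$, so its almost-sure limit in either regime is inherited verbatim from Theorems~\ref{thm:correlated_over} and~\ref{correlated_under_biasvariance}. The same lemma writes the bias as
\$
B_{(\beta,S,X)}\bigl(\hat\beta^S;\beta\bigr)\;=\;\beta^\top (I_p-P_n)\,\Sigma\,(I_p-P_n)\,\beta,\qquad P_n := (X^\top S^\top SX)^+ X^\top S^\top SX.
\$
When $\phi\psi^{-1}<1$ we have $p<m$ eventually; under Assumptions~\ref{assump_model_spec} and~\ref{assumption_sketchingMatrix}, $X^\top S^\top SX$ is almost surely invertible for large $n$, so $P_n=I_p$ and the bias vanishes almost surely, giving part~(i).

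For part~(ii), set $A_n:=X^\top S^\top SX/n$. The identity $I_p-P_n=\lim_{\lambda\to 0^+}\lambda(A_n+\lambda I_p)^{-1}$ turns the bias into a ridgeless limit of a resolvent bilinear form:
\$
B_{(\beta,S,X)}\bigl(\hat\beta^S;\beta\bigr)\;=\;\lim_{\lambda\to 0^+}\lambda^2\,\beta^\top(A_n+\lambda I_p)^{-1}\,\Sigma\,(A_n+\lambda I_p)^{-1}\,\beta.
\$
Since $X=Z\Sigma^{1/2}$, the matrix $A_n=\Sigma^{1/2}Z^\top S^\top SZ\Sigma^{1/2}/n$ is a separable-covariance sample covariance, for which the random matrix toolbox used in Theorems~\ref{thm:correlated_over} and~\ref{correlated_under_biasvariance} (\cite{zhang2007spectral,el2009concentration,Knowles2017}) supplies a matrix-valued deterministic equivalent of the resolvent. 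To manoeuvre $\Sigma$ between the two resolvents, the key trick is
\$
\lambda^2(A_n+\lambda I_p)^{-1}\,\Sigma\,(A_n+\lambda I_p)^{-1}\;=\;-\lambda^2\,\frac{d}{dt}\Big|_{t=0}(A_n+t\Sigma+\lambda I_p)^{-1},
\$
so differentiating the deterministic equivalent of the $t$-perturbed resolvent at $t=0$ produces one for $\Psi_n(\lambda):=\lambda^2(A_n+\lambda I_p)^{-1}\Sigma(A_n+\lambda I_p)^{-1}$. Implicit differentiation of the underlying self-consistent equation is precisely where the fluctuation factor $1/(1-J)=1+c_1$ enters; sending $\lambda\to 0^+$ and identifying the scalar limit via Lemma~\ref{lemma:unique_neg_solution} yields the matrix equivalent
\$
\Psi_n(\lambda)\;\asymp\;(1+c_1)\,c_0^2\,\Sigma\,(c_0 I_p-\psi\phi^{-1}\Sigma)^{-2}.
\$

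Diagonalizing $\Sigma=\sum_{i=1}^p\lambda_i u_iu_i^\top$ recasts the normalized bilinear form as an integral against the VESD,
\$
\frac{\beta^\top\Psi_n(\lambda)\beta}{\|\beta\|^2}\;\longrightarrow\;(1+c_1)\int\frac{c_0^2 x}{(c_0-x\psi\phi^{-1})^2}\,dG_n(x),
\$
and since $c_0<0$ while $x\in[C_0,C_1]$ by Assumption~\ref{assumption_general}, the integrand is bounded and continuous on the support, so weak convergence $G_n\Rightarrow G$ delivers~\eqref{limit:deterministic_bias2}. The main obstacle I anticipate is justifying the exchange of $\lambda\to 0^+$ with $n\to\infty$: quadratic-form local laws are typically quantitative only for $\lambda$ bounded away from the spectral edge, whereas we need the behavior at the origin. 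I would address this by a spectral-gap estimate showing that, in the overparameterized regime $\phi\psi^{-1}>1$, the smallest nonzero eigenvalue of $A_n$ stays almost surely bounded below by a positive constant, using Assumption~\ref{assumption_sketchingMatrix}'s uniform lower bound on $\lambda_{\min}(SS^\top)$ together with $\lambda_{\min}(\Sigma)\ge C_0$; this makes $\Psi_n(\lambda)$ uniformly bounded near $\lambda=0$ and lets dominated convergence close the limit interchange.
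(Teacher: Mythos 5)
Your proposal follows the paper's own route at every major step: identify the bias with a $\Sigma$-sandwiched resolvent quadratic form, realize that form as the derivative of a perturbed resolvent, apply the deterministic equivalent for separable covariance matrices and implicitly differentiate the self-consistent equation (which is exactly where $1+c_1$ appears), then take the ridgeless limit and justify the interchange of $\lambda\to 0^+$ with $n\to\infty$ via a uniform spectral-gap bound (the paper's Lemma~\ref{lemma:appendix_support} combined with Arzela--Ascoli/Moore--Osgood and Vitali). Your additive perturbation $t\Sigma$ is a reparameterization of the paper's $-zw\Sigma$: writing $A_n+t\Sigma+\lambda I_p=(I_p+\tfrac{t}{\lambda}\Sigma)^{1/2}(\Sigma_t^{1/2}Z^\top S^\top SZ\Sigma_t^{1/2}/n+\lambda I_p)(I_p+\tfrac{t}{\lambda}\Sigma)^{1/2}$ recovers exactly the paper's $\Sigma_w,\beta_w$ with $w=t/\lambda$, so the two lead to the same differentiated self-consistent system.

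One step the paper performs that you omit, and that is worth emphasizing, is the reduction from $X^\top S^\top SX$ to $X^\top V^\top VX$ via the SVD $S=UDV$ (equations \eqref{equation:appendix_biaslemma_1}--\eqref{equation:appendix_biaslemma_2}). Because $(X^\top S^\top SX)^+X^\top S^\top SX=(X^\top V^\top VX)^+X^\top V^\top VX$ almost surely for large $n$, the projection $I_p-P_n$ is independent of the singular values of $S$; hence the bias limit cannot depend on $B$, the LSD of $SS^\top$. Your $A_n=X^\top S^\top SX/n=\Sigma^{1/2}Z^\top V^\top D^2 VZ\Sigma^{1/2}/n$ has separable covariance structure with $D^2$ on the left, so for any fixed $\lambda>0$ the deterministic equivalent of $(A_n+\lambda I_p)^{-1}$ genuinely depends on $B$, and you would need to verify separately that all $B$-dependence cancels in the $\lambda\to 0^+$ limit (which it does, but nontrivially). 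The paper's reduction makes this manifest from the start and keeps the self-consistent equations in terms of $H$ and $\psi\phi^{-1}$ only, matching the $c_0$ of Lemma~\ref{lemma:unique_neg_solution} without further argument. Otherwise your proof is correct.
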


\cite{hastie2019surprises} obtained a similar result in the case of the full-sample ridgeless least square estimator.  Because we are dealing with sketched estimators where additional random sketching matrices are involved, our proofs are more challenging. Specifically, we utilize results for separable covariance matrices.  If we further assume $\nbr{\beta}^2 \to \alpha^2$ and $\Sigma = I_p$, then Theorem \ref{thm:correlated_deterministic} shall recover the same limiting risks  in Theorem \ref{thm:isotropic_limiting_risk}.

\subsection{Central limit theorem}\label{sec:clt}

This subsection establishes central limit theorems for both out-of-sample prediction risks $R_{(\beta,S,X)}(\hat \beta^S;\beta)$ and $R_{(S,X)}(\hat \beta^S;\beta)$. \cite{li2021asymptotic} studies the central limit theorems for risks of full-sample ridgeless least square estimator. Compared with their work, our results show the risks of sketched estimators may have smaller asymptotic variances. We start with the following assumptions. 

\begin{assumption}[Random $\beta$]\label{assumption_random_effect_normal}
The coefficient vector $\beta$ follows a multivariate normal distribution $\mathcal{N}_p\rbr{0, \frac{\alpha^2}{p}I_p}$, and is independent of the data matrix $X$, the noise $\varepsilon$, and the  sketching matrix $S$. 
\end{assumption}

\begin{assumption}
\label{CLT_assumption}
Suppose $\{X_{ij}\}$ share the fourth moment $\nu_4:=\EE | X_{ij}|^4<\infty $. Furthermore, they satisfy the following Lindeberg condition  
\$
\frac{1}{np}\sum_{1\le i\le n, \ 1\le j\le p}\EE \left(|X_{ij}|^4 \mathbf{1}_{[\eta\sqrt n,\infty)}(|X_{ij}|)\right)\rightarrow 0,\quad {\rm for\ any\ fixed}\ \eta>0,
\$
where the indicator function $\mathbf{1}_{[\eta\sqrt n,\infty)}(|X_{ij}|)$ takes value $1$ if and only if $|X_{ij}|\ge \eta \sqrt n$.
\end{assumption}

\paragraph{CLTs for $R_{(S,X)}(\hat \beta^S;\beta)$.}
The following theorems give CLTs for $R_{(S,X)}(\hat \beta^S;\beta)$ in the underparameterized  and overparameterized regimes. Recall that $m,n,p \to \infty$ such that $\phi_n = p/n \to \phi$ and $\psi_n = m/n \to \psi \in (0,1)$. 

\begin{theorem}\label{CLT_under}
Assume Assumptions \ref{assump_model_spec},  \ref{assumption_general}, \ref{assumption_sketchingMatrix}, \ref{assumption_random_effect_normal}, and \ref{CLT_assumption}. Suppose  $\phi \psi^{-1} < 1$ and $S$ is an orthogonal sketching matrix. Then it holds that  
\$
p\left(R_{(S,X)}(\hat \beta^S;\beta)- \frac{\sigma^2 \phi_n \psi_n^{-1}}{1-\phi_n\psi_n^{-1}} \right)
\overset{D}{\longrightarrow} \cN (\mu_1, \sigma^2_1),
\$
where 
\$
\mu_1 & = \frac{\sigma^2\phi^2\psi^{-2}}{(\phi\psi^{-1}-1)^2}+\frac{\sigma^2\phi^2\psi^{-2}(\nu_4-3)}{1-\phi\psi^{-1}},\quad
\sigma_1^2  =\frac{2\sigma^4\phi^3\psi^{-3}}{(\phi\psi^{-1}-1)^4}+\frac{\sigma^4\phi^3\psi^{-3}(\nu_4-3)}{(1-\phi\psi^{-1})^2}.
\$
\end{theorem}

\begin{theorem}\label{CLT_over}
Assume Assumptions \ref{assump_model_spec}, \ref{assumption_random_effect_normal}, \ref{CLT_assumption} and $\Sigma = I_p$. Suppose  $ \phi \psi^{-1} > 1$ and $S$ is any sketching matrix that satisfies Assumption \ref{assumption_sketchingMatrix}. Then it holds that 
\$
p\left(R_{(S,X)}(\hat \beta^S;\beta)- \alpha^2 (1-\psi_n \phi_n^{-1})-\frac{\sigma^2}{\phi_n\psi_n^{-1}-1}\right)
\overset{D}{\longrightarrow} \cN (\mu_2, \sigma^2_2),
\$
where 
\$
\mu_2 & = \frac{\sigma^2 \phi\psi^{-1}}{(\phi\psi^{-1}-1)^2}+\frac{\sigma^2 (\nu_4-3)}{\phi\psi^{-1}-1},\quad
\sigma_2^2  =\frac{2\sigma^4\phi^3\psi^{-3}}{(\phi\psi^{-1}-1)^4}+\frac{\sigma^4\phi\psi^{-1}(\nu_4-3)}{(\phi\psi^{-1}-1)^2}.
\$
\end{theorem}

The CLT of $R_{(S,X)}(\hat \beta^S;\beta)$ after an orthogonal sketching $(X,Y)\mapsto (SX, SY)$ coincides with that by \cite{li2021asymptotic} after replacing $p/n$ by $p/m$. 
According to Theorems~\ref{CLT_under}, \ref{CLT_over} and \ref{optimal_sketching_size}, we provide the asymptotic variance of $R_{(S,X)}(\hat \beta^S;\beta)$ for  the orthogonal sketched estimator with  the optimal sketching size $m^*$ given by Theorem~\ref{optimal_sketching_size}.  

\begin{corollary}\label{CLT_optimal}
Denote the asymptotic variance of the risk $R_{(S,X)}(\hat \beta^S;\beta)$ for  the orthogonal sketched estimator with  the optimal sketching size $m^*$ by $\sigma^2_S$. 
The followings hold. 
\begin{itemize}
\item[(a)] If ${\rm SNR}> 1$ and $\phi \in (1-\frac{\sigma}{2\alpha},\frac{\alpha}{\alpha-\sigma}]$, 
then $\sigma_S^2=2\alpha^3 (\alpha -\sigma)+\sigma^2 (\nu_4-3)\alpha (\alpha -\sigma)$. 
\item[(b)] If ${\rm SNR}\le 1$ and $\phi \in (\frac{\alpha^2}{\alpha^2+\sigma^2},\infty)$, then $\sigma_S^2= O(\frac{m^*}{n})\rightarrow 0$.
\item[(c)] If either of the following two holds:
(i) ${\rm SNR}\le 1$ and $\phi \in (0,\frac{\alpha^2}{\alpha^2+\sigma^2}]$, or 
(ii) ${\rm SNR}> 1$ and $\phi \in (0,1-\frac{\sigma}{2\alpha}]\bigcup(\frac{\alpha}{\alpha-\sigma},\infty)$, then 
\$
\sigma_S^2=
\begin{cases}
\dfrac{2\sigma^4\phi^3}{(\phi-1)^4}+\dfrac{\sigma^4\phi^3(\nu_4-3)}{(1-\phi)^2},\ &{\rm if}\ \phi<1,\\
\dfrac{2\sigma^4\phi^5}{(\phi-1)^4}+\dfrac{\sigma^4\phi^3(\nu_4-3)}{(\phi-1)^2},\ &{\rm if}\ \phi> 1.
\end{cases}
\$
\end{itemize}
\end{corollary}

Comparing the asymptotic variance $\sigma_S^2$ of $R_{(S,X)}(\hat \beta^S;\beta)$ with optimal sketching and that of $R_{S, X}(\hat \beta;\beta)$ without sketching, we have following observations. First, the non-trivial and optimal sketching in case (a) may result in a smaller  asymptotic variance $\sigma_S^2$ than  
that for the full-sample estimator. Take standard Gaussian features with $\phi>1$, for which the forth (central) moment $\nu_4$ is 3, as an example. Then it can be verified that $\sigma_S^2\le 2\sigma^4\phi^3(\phi-1)^{-4}=: \sigma^2_0$  for $\phi\in(1,\alpha(\alpha-\sigma)^{-1})$ and $\sigma_S^2\le \sigma^4 (2\phi-1)(1-\phi)^{-4}/8 <\sigma^2_0$ for $\phi\in (1-\sigma\alpha^{-1}/2,1]$ when ${\rm SNR}> 1$. Second, the trivial sketching in case (b) has a zero limiting variance because in this case  the null estimator $\tilde{\beta} = 0$ is optimal.

\paragraph{CLTs for $R_{(\beta, S,X)}(\hat \beta^S;\beta)$.}

In the underparameterized regime, for sufficiently large $n$, $B_{(\beta,S,X)}\rbr{\hat{\beta}^{ S};\beta}\sim B_{(S,X)}\rbr{\hat{\beta}^{ S};\beta} \sim 0$, and $V_{(\beta,S,X)}\rbr{\hat{\beta}^{ S};\beta} \sim V_{(S,X)}\rbr{\hat{\beta}^{ S};\beta}$. Thus, $B_{(\beta,S,X)}\rbr{\hat{\beta}^{ S};\beta}$ has exactly the same CLT as $B_{(S,X)}\rbr{\hat{\beta}^{ S};\beta}$ in Theorem \ref{CLT_under}. We now present the corresponding CLT in the overparameterized case.

\begin{theorem}\label{thm:clt_over_conditioningbeta}
    Assume Assumptions \ref{assump_model_spec}, \ref{assumption_random_effect_normal}, \ref{CLT_assumption} and $\Sigma=I_p$. Suppose $ \phi \psi^{-1} > 1$ and $S$ is any sketching matrix $S$ that satisfies Assumption \ref{assumption_sketchingMatrix}. Then it holds that 
\begin{align*}
    \sqrt{p}\left(R_{(\beta, S,X)}(\hat \beta^S;\beta)- \alpha^2 (1-\psi_n \phi_n^{-1})-\frac{\sigma^2}{\phi_n\psi_n^{-1}-1}\right)
\overset{D}{\longrightarrow} \cN (\mu_3, \sigma^2_3),
\end{align*}
where $\mu_3 = 0$ and $\sigma_3^2 =2(1-\phi^{-1}\psi)\alpha^4$. 
More precise versions of $\mu_3$ and $\sigma_3^2$ are 
\begin{align*}
    \tilde{\mu}_3 &= \frac{1}{\sqrt{p}} \rbr{\frac{\sigma^2 \phi\psi^{-1}}{(\phi\psi^{-1}-1)^2}+\frac{\sigma^2 (\nu_4-3)}{\phi\psi^{-1}-1}},\\
\tilde{\sigma}_3^2  &=2(1-\phi^{-1}\psi)\alpha^4 + \frac{1}{p} \rbr{\frac{2\sigma^4\phi^3\psi^{-3}}{(\phi\psi^{-1}-1)^4}+\frac{\sigma^4\phi\psi^{-1}(\nu_4-3)}{(\phi\psi^{-1}-1)^2}}.
\end{align*}
\end{theorem}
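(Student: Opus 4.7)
The plan is to leverage the bias--variance decomposition in Lemma~\ref{lemma_bias_variance} together with the CLT for $R_{(S,X)}$ established in Theorem~\ref{CLT_over}. Specialising to $\Sigma = I_p$ and using that $M := (X^\top S^\top S X)^+ X^\top S^\top S X$ is an orthogonal projection with $(I_p - M)^2 = I_p - M$, the risk collapses to
\begin{equation*}
R_{(\beta,S,X)}(\hat\beta^S;\beta) \;=\; \beta^\top (I_p - M)\beta \;+\; V_{(S,X)}(\hat\beta^S;\beta).
\end{equation*}
In the overparameterized regime $\phi\psi^{-1} > 1$, almost surely $\mathrm{rank}(M) = m$ so $\mathrm{tr}(I_p - M) = p - m$. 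Writing the centering as $\kappa_n := \alpha^2(1-\psi_n\phi_n^{-1}) + \sigma^2/(\phi_n\psi_n^{-1}-1)$, we split $\sqrt{p}(R_{(\beta,S,X)} - \kappa_n) = T_1 + T_2$, where
\begin{equation*}
T_1 := \sqrt{p}\rbr{\beta^\top(I_p - M)\beta - \alpha^2\tfrac{p-m}{p}}, \qquad T_2 := \sqrt{p}\rbr{V_{(S,X)} - \tfrac{\sigma^2}{\phi_n\psi_n^{-1}-1}}.
\end{equation*}

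For $T_1$, the key observation is that $\beta \sim \mathcal{N}(0,(\alpha^2/p)I_p)$ is independent of $(S,X)$, so conditional on $(S,X)$, $\beta^\top(I_p - M)\beta$ is distributed as $(\alpha^2/p)\chi^2_{p-m}$. Since $p - m \sim p(1-\psi\phi^{-1}) \to \infty$, the standard chi-squared CLT yields, conditionally on $(S,X)$,
\begin{equation*}
T_1 \overset{D}{\longrightarrow} \mathcal{N}\rbr{0,\;2\alpha^4(1-\psi\phi^{-1})}.
\end{equation*}
Because the limit is non-random, dominated convergence applied to characteristic functions promotes this conditional convergence to unconditional distributional convergence. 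For $T_2$, I would observe that $\mathbb{E}[\beta^\top(I_p - M)\beta \mid S,X] = \alpha^2(1-\psi_n\phi_n^{-1})$ is deterministic, hence $R_{(S,X)} - \alpha^2(1-\psi_n\phi_n^{-1}) = V_{(S,X)}$; Theorem~\ref{CLT_over} then gives $p\,(V_{(S,X)} - \sigma^2/(\phi_n\psi_n^{-1}-1)) \overset{D}{\longrightarrow} \mathcal{N}(\mu_2,\sigma_2^2)$, so $T_2$ has mean $\mu_2/\sqrt{p}$ and variance $\sigma_2^2/p$, both vanishing. Slutsky's theorem then yields the leading-order CLT $\mathcal{N}(0,\,2(1-\phi^{-1}\psi)\alpha^4)$.

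The refined parameters $\tilde\mu_3, \tilde\sigma_3^2$ follow from the conditional independence of $T_1$ and $T_2$ given $(S,X)$, since $T_2$ is a function of $(S,X)$ only while $T_1$ carries additional randomness from the independent $\beta$. By the law of total expectation the asymptotic mean is inherited from $\mathbb{E}[T_2]\sim \mu_2/\sqrt{p}$, while by the law of total variance the asymptotic variance equals $\mathbb{E}[\mathrm{Var}(T_1 \mid S,X)] + \mathrm{Var}(T_2) \to 2(1-\phi^{-1}\psi)\alpha^4 + \sigma_2^2/p$, matching the stated $\tilde\sigma_3^2$. The main obstacle I anticipate is the conditional-to-unconditional upgrade for $T_1$; this reduces to a routine characteristic-function argument precisely because the conditional limit is non-random, with a secondary bookkeeping step to verify that the event $\{\mathrm{rank}(M) < m\}$ is negligible --- which follows from standard full-rank arguments applied to $SX$ under Assumption~\ref{assumption_sketchingMatrix}.
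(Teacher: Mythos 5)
Your proof is correct, and it takes a genuinely more elementary route than the paper. The paper proves the CLT for the bias term $\sqrt{p}\{B_{(\beta,S,X)} - \alpha^2(1-\phi_n^{-1}\psi_n)\}$ by invoking a general random-matrix CLT for quadratic forms $\beta^\top A_n \beta$ (citing Theorem~7.2 of Bai--Silverstein, following Li--Wei Theorem~4.5), which produces an expression involving a structural parameter $w$ (the limit of the average squared diagonal of $I_p - M$) that then cancels only because $\beta$ is multivariate normal. You instead exploit the fact that, under Assumption~\ref{assumption_random_effect_normal} and with $\Sigma = I_p$, the quadratic form $\beta^\top(I_p - M)\beta$ is \emph{exactly} $(\alpha^2/p)\chi^2_{p-m}$ conditional on $(S,X)$ (once $\mathrm{rank}(M)=m$, which holds a.s. for large $n$), so a one-dimensional chi-squared CLT plus the conditional-to-unconditional upgrade does all the work. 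This is shorter and self-contained, and it also makes the asymptotic independence of $T_1$ and $T_2$ transparent --- $T_2$ is $\sigma(S,X)$-measurable while $T_1$, conditionally on $(S,X)$, is a centered chi-squared, so the law of total variance gives $\tilde\sigma_3^2$ directly. The paper's argument is heavier but would extend to non-Gaussian i.i.d.\ $\beta$ (with $w$ surviving into the limiting variance), whereas yours is tied to the Gaussian prior and the projection structure of $M$ under $\Sigma = I_p$; for the theorem as stated, both assumptions hold, so your approach is a legitimate simplification. The only bookkeeping items you correctly flag --- the a.s.\ full-rank event and the characteristic-function argument for the unconditional limit --- are routine and are indeed what makes the argument close.
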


\subsection{Misspecified model}
This subsection briefly discusses the misspecified model. When the misspecification error, aka model bias,  is included,  the risk will decrease at first and then increase for the full-sample ridgeless least square estimator in the underparameterized case. 
This aligns with the classic statistical idea of ``underfitting'' and ``overfitting''. This subsection studies the effect of sketching on the selection of the optimal sketching size. 

We consider a misspecified in which  we observe only a subset of the features. A similar model is also discussed in the section 5.1 of \cite{hastie2019surprises}. Suppose the true model is 
\begin{align}\label{model_misspecified}
y_i=\beta^\top  x_i+ \theta^\top  w_i+ \varepsilon_i,\ i = 1,\cdots,n,    
\end{align}
where $x_i \in \RR^p, w_i \in \RR^q$ and the noise $\varepsilon_i$ is independent of $(x_i,w_i)$. Further assume $(x_i, w_i)$ are jointly Gaussian with mean zero and covariance matrix
\begin{align*}
    \Sigma = 
    \begin{bmatrix}
        \Sigma_{xx}, \Sigma_{xw}\\
        \Sigma_{xw}^\top , \Sigma_{ww}
    \end{bmatrix}.
\end{align*}

We can only observe the data matrix $X = (x_1,\cdots,x_n)^\top  \in\mathbb R^{n\times p}$. Still, we use the sketched data $\tilde Y := S Y\in \RR^{m}$, $ \tilde {X}:= SX\in\RR^{m\times p}$ and its corresponding minimum-norm least square estimator $\hat\beta^{S}$ defined in \eqref{eq:sketched}. Let $(x_{\text{new}},w_{\text{new}})$ be a test point.  The out-of-sample prediction risk is defined as
\begin{align*}
    R_{(S,X)} \left(\hat\beta^{S};\beta, \theta\right)=\EE\left[\left(x_{\text{new}}^\top \hat\beta^{S} - x_{\text{new}}^\top  \beta-w_{\text{new}}^\top  \theta\right)^2  \Big|  S,X \right].
\end{align*}
Here we let $\beta$ and $\theta$ are nonrandom parameters and the expectation is taken over $x_{\text{new}}, w_{\text{new}}, \epsilon$ and also $W = (w_1,\cdots,w_n)^\top  \in\mathbb R^{n\times q}$. Similar to lemma 2 in \cite{hastie2019surprises}, we can decompose the risk into two terms,
\begin{align*}
    R_{(S,X)} \left(\hat\beta^{S};\beta, \theta\right)= \underbrace{\EE\left[\left(x_{\text{new}}^\top \hat\beta^{S} - \EE \rbr{y_{\text{new}}|x_{\text{new}}}\right)^2  \Big|  S,X \right]}_{R^*_{(S,X)} \left(\hat\beta^{S};\beta, \theta\right)} + \underbrace{\EE \sbr{\rbr{\EE \rbr{y_{\text{new}}|x_{\text{new}}} - \EE \rbr{y_{\text{new}}|x_{\text{new}},w_{\text{new}}}}^2}}_{M(\beta,\theta)},
\end{align*}
where $M(\beta,\theta)$ can be seen as the misspecification bias. Notice that conditioning on $x_i$, model \eqref{model_misspecified} is equivalent to $y_i = \tilde{\beta}^\top  x_i + \tilde{\varepsilon_i}$ where $\tilde{\beta} = \beta + \Sigma_{xx}^{-1}\Sigma_{xw}\theta$ and $\tilde{\varepsilon_i} \sim N(0, \tilde{\sigma}^2)$ is independent of $x_i$, $\tilde{\sigma}^2 = \sigma^2 + \theta^\top  \Sigma_{w|x}\theta$. Here $\Sigma_{w|x}$ is the covariance matrix of $w_i$ given $x_i$, i.e., $\Sigma_{w|x} = \Sigma_{ww} - \Sigma_{xw}^\top  \Sigma_{xx}^{-1}\Sigma_{xw}$. Moreover, simple calculation shows $M(\beta,\theta) = \theta^\top  \Sigma_{w|x}\theta$. We refer readers to Remark 2 in \cite{hastie2019surprises} for more details.

We conclude that even for this misspecified model, since $M(\beta,\theta)$ is independent of the sketching matrix $S$, and $R^*_{(S,X)} \left(\hat\beta^{S};\beta, \theta\right)$ can still be approximated using Theorem \ref{thm:correlated_deterministic}, random sketching cannot improve the limiting risks by sketching the estimator to the underparameterized regime. We expect in more complicated models, for example, the random feature model in \cite{mei2022generalization}, sketching to the underparameterized regime might help reduce the limiting risks. We leave this problem to the future.

\section{Conclusions and Discussions}\label{sec:conclusion}

This paper introduces a dual view of overparametrization suggesting that downsampling may also help improve generalization performance. Motiviated by this insight, we investigates the statistical roles of downsampling through random sketching in linear regression estimators, uncovering several intriguing phenomena.  First, contrary to conventional beliefs, our findings demonstrate that downsampling does not always harm the generalization performance. In fact, it can be beneficial in certain cases, challenging the prevailing notion. Second, we establish that orthogonal sketching is optimal among all types of sketching considered in the underparameterized regime. In the overparameterized regime however, all general sketching matrices are equivalent. Third, we provide central limit theorems for the risks and discuss the implications of our results for misspecified models. Lastly, we identify the optimal sketching sizes that minimize the out-of-sample prediction risks under isotropic features. 
The optimally sketched ridgeless least square estimators exhibit universally better risk curves, indicating their improved stability compared with the full-sample estimator.  

We point out that the benefit of optimal sketching arises from the non-monotonic nature of the risk function with respect to the aspect ratio. Interestingly, recent studies \citep{hastie2019surprises} have observed that this non-monotonicity disappears when optimally-tuned ridge regularization is applied. The motivation behind investigating minimum norm estimators, including ridgeless linear regression estimators, stems from the surprising behavior of deep neural networks. Despite lacking explicit regularizers like weight decay or data augmentation, deep neural networks often exhibit a minimal gap between training and test performance \citep{zhang2021understanding}. The ridgeless least square estimator closely mimics the practice in neural networks, making it an intriguing subject for analysis in the context of linear regression.

Furthermore, comparing with downsampling, the optimally-tuned ridge regression is usually more computationally intensive, as there is no computational reduction from downsampling. Downsampling can provide a potential tool for mitigating  the risk with less computational cost. Additionally, we demonstrate that, surprisingly, in certain cases, the sketched ridgeless estimator can have a smaller asymptotic variance compared to the full-sample estimator.
This is unclear for ridge regression.  

As future research directions, it would be interesting to compare the statistical behaviors of ridge  and downsampled estimators, as their comparative properties remain unclear. From a broader perspective, viewing downsampling as a form of regularization raises the question of which regularization approach is optimal among all possibilities. Additionally, we hypothesize that Assumption \ref{assumption_sketchingMatrix} on the sketching matrix can be further relaxed to accommodate cases such as subsampling with replacement, where the limiting spectral distribution of $SS^\top$ contains zero as a mass. Refining the proposed practical procedure with provable guarantees and establishing central limit theorems for more general cases, such as i.i.d. sketching and correlated features, are also promising directions for future exploration.

\section*{Acknowledgements}
Yicheng Zeng is partially supported by the Shenzhen Outstanding Scientific and Technological Innovation Talents PhD Startup Project (Grant RCBS20221008093336086) and by the Internal Project Fund from Shenzhen Research Institute of Big Data (Grant J00220230012). Siyue Yang and Qiang Sun are partially supported by Natural Sciences and Engineering Research Council of Canada (Grant RGPIN-2018-06484) and a Data Sciences Institute Catalyst Grant.

\bibliographystyle{icml2023}
\bibliography{ref}

\begin{thebibliography}{41}
\providecommand{\natexlab}[1]{#1}
\providecommand{\url}[1]{\texttt{#1}}
\expandafter\ifx\csname urlstyle\endcsname\relax
  \providecommand{\doi}[1]{doi: #1}\else
  \providecommand{\doi}{doi: \begingroup \urlstyle{rm}\Url}\fi

\bibitem[Ailon \& Chazelle(2006)Ailon and Chazelle]{ailon2006approximate}
Ailon, N. and Chazelle, B.
\newblock Approximate nearest neighbors and the fast johnson-lindenstrauss
  transform.
\newblock In \emph{Proceedings of the thirty-eighth annual ACM symposium on
  Theory of computing}, pp.\  557--563, 2006.

\bibitem[Ba et~al.(2019)Ba, Erdogdu, Suzuki, Wu, and
  Zhang]{ba2019generalization}
Ba, J., Erdogdu, M., Suzuki, T., Wu, D., and Zhang, T.
\newblock Generalization of two-layer neural networks: An asymptotic viewpoint.
\newblock In \emph{Proceedings of the seventh International Conference on
  Learning Representations}, 2019.

\bibitem[Bai \& Silverstein(1998)Bai and Silverstein]{bai1998no}
Bai, Z. and Silverstein, J.~W.
\newblock No eigenvalues outside the support of the limiting spectral
  distribution of large-dimensional sample covariance matrices.
\newblock \emph{The Annals of Probability}, 26\penalty0 (1):\penalty0 316--345,
  1998.

\bibitem[Bai \& Silverstein(2010)Bai and Silverstein]{bai2010spectral}
Bai, Z. and Silverstein, J.~W.
\newblock \emph{Spectral Analysis of Large Dimensional Random Matrices}.
\newblock Springer, New York, 2010.

\bibitem[Bai \& Yao(2008)Bai and Yao]{bai2008central}
Bai, Z. and Yao, J.
\newblock Central limit theorems for eigenvalues in a spiked population model.
\newblock \emph{Annales de l'IHP Probabilit{\'e}s et Statistiques}, 44\penalty0
  (3):\penalty0 447--474, 2008.

\bibitem[Bartlett et~al.(2020)Bartlett, Long, Lugosi, and
  Tsigler]{bartlett2020benign}
Bartlett, P.~L., Long, P.~M., Lugosi, G., and Tsigler, A.
\newblock Benign overfitting in linear regression.
\newblock \emph{Proceedings of the National Academy of Sciences}, 117\penalty0
  (48):\penalty0 30063--30070, 2020.

\bibitem[Belkin et~al.(2018)Belkin, Hsu, Ma, and Mandal]{belkin2018reconciling}
Belkin, M., Hsu, D., Ma, S., and Mandal, S.
\newblock Reconciling modern machine learning practice and the bias-variance
  trade-off.
\newblock \emph{arXiv preprint arXiv:1812.11118}, 2018.

\bibitem[Canziani et~al.(2016)Canziani, Paszke, and
  Culurciello]{canziani2016analysis}
Canziani, A., Paszke, A., and Culurciello, E.
\newblock An analysis of deep neural network models for practical applications.
\newblock \emph{arXiv preprint arXiv:1605.07678}, 2016.

\bibitem[Chatterji \& Long(2021)Chatterji and Long]{chatterji2021finite}
Chatterji, N.~S. and Long, P.~M.
\newblock Finite-sample analysis of interpolating linear classifiers in the
  overparameterized regime.
\newblock \emph{Journal of Machine Learning Research}, 22\penalty0
  (1):\penalty0 5721--5750, 2021.

\bibitem[Couillet \& Hachem(2014)Couillet and Hachem]{couillet2014analysis}
Couillet, R. and Hachem, W.
\newblock Analysis of the limiting spectral measure of large random matrices of
  the separable covariance type.
\newblock \emph{Random Matrices: Theory and Applications}, 3\penalty0
  (04):\penalty0 1450016, 2014.

\bibitem[Couillet \& Liao(2022)Couillet and Liao]{couillet2021random}
Couillet, R. and Liao, Z.
\newblock \emph{Random Matrix Methods for Machine Learning}.
\newblock Cambridge University Press, Cambridge, 2022.

\bibitem[Dobriban \& Liu(2018)Dobriban and Liu]{dobriban2018asymptotics}
Dobriban, E. and Liu, S.
\newblock Asymptotics for sketching in least squares regression.
\newblock \emph{arXiv preprint arXiv:1810.06089}, 2018.

\bibitem[Dobriban \& Wager(2018)Dobriban and Wager]{dobriban2018high}
Dobriban, E. and Wager, S.
\newblock High-dimensional asymptotics of prediction: ridge regression and
  classification.
\newblock \emph{The Annals of Statistics}, 46\penalty0 (1):\penalty0 247--279,
  2018.

\bibitem[Drineas \& Mahoney(2018)Drineas and Mahoney]{drineas2018lectures}
Drineas, P. and Mahoney, M.~W.
\newblock Lectures on randomized numerical linear algebra.
\newblock \emph{The Mathematics of Data}, 25\penalty0 (1), 2018.

\bibitem[El~Karoui(2009)]{el2009concentration}
El~Karoui, N.
\newblock Concentration of measure and spectra of random matrices: applications
  to correlation matrices, elliptical distributions and beyond.
\newblock \emph{The Annals of Applied Probability}, 19\penalty0 (6):\penalty0
  2362--2405, 2009.

\bibitem[Golub \& Van~Loan(2013)Golub and Van~Loan]{golub2013matrix}
Golub, G.~H. and Van~Loan, C.~F.
\newblock \emph{Matrix Computations}.
\newblock Johns Hopkins University Press, Baltimore, 2013.

\bibitem[Gunasekar et~al.(2018)Gunasekar, Lee, Soudry, and
  Srebro]{gunasekar2018characterizing}
Gunasekar, S., Lee, J., Soudry, D., and Srebro, N.
\newblock Characterizing implicit bias in terms of optimization geometry.
\newblock In \emph{Proceedings of the thirty-fifth International Conference on
  Machine Learning}, pp.\  1832--1841. PMLR, 2018.

\bibitem[Hastie et~al.(2009)Hastie, Tibshirani, Friedman, and
  Friedman]{hastie2009elements}
Hastie, T., Tibshirani, R., Friedman, J.~H., and Friedman, J.~H.
\newblock \emph{The Elements of Statistical Learning: Data Mining, Inference,
  and Prediction}.
\newblock Springer, New York, 2009.

\bibitem[Hastie et~al.(2022)Hastie, Montanari, Rosset, and
  Tibshirani]{hastie2019surprises}
Hastie, T., Montanari, A., Rosset, S., and Tibshirani, R.~J.
\newblock Surprises in high-dimensional ridgeless least squares interpolation.
\newblock \emph{The Annals of Statistics}, 50\penalty0 (2):\penalty0 949--986,
  2022.

\bibitem[He et~al.(2016)He, Zhang, Ren, and Sun]{he2016deep}
He, K., Zhang, X., Ren, S., and Sun, J.
\newblock Deep residual learning for image recognition.
\newblock In \emph{Proceedings of the twenty-ninth IEEE Conference on Computer
  Vision and Pattern Recognition}, pp.\  770--778, 2016.

\bibitem[Knowles \& Yin(2017)Knowles and Yin]{Knowles2017}
Knowles, A. and Yin, J.
\newblock Anisotropic local laws for random matrices.
\newblock \emph{Probability Theory and Related Fields}, 169\penalty0
  (1):\penalty0 257--352, 2017.

\bibitem[Li et~al.(2021)Li, Xie, and Wang]{li2021asymptotic}
Li, Z., Xie, C., and Wang, Q.
\newblock Asymptotic normality and confidence intervals for prediction risk of
  the min-norm least squares estimator.
\newblock In \emph{Proceedings of the thirty-eighth International Conference on
  Machine Learning}, pp.\  6533--6542. PMLR, 2021.

\bibitem[Liang \& Rakhlin(2020)Liang and Rakhlin]{liang2020just}
Liang, T. and Rakhlin, A.
\newblock Just interpolate: kernel “ridgeless” regression can generalize.
\newblock \emph{The Annals of Statistics}, 48\penalty0 (3):\penalty0
  1329--1347, 2020.

\bibitem[Liang \& Recht(2021)Liang and Recht]{liang2021interpolating}
Liang, T. and Recht, B.
\newblock Interpolating classifiers make few mistakes.
\newblock \emph{arXiv preprint arXiv:2101.11815}, 2021.

\bibitem[Mahoney(2011)]{mahoney2011randomized}
Mahoney, M.~W.
\newblock Randomized algorithms for matrices and data.
\newblock \emph{Foundations and Trends in Machine Learning}, 3\penalty0
  (2):\penalty0 123--224, 2011.

\bibitem[Marcenko \& Pastur(1967)Marcenko and
  Pastur]{marvcenko1967distribution}
Marcenko, V.~A. and Pastur, L.~A.
\newblock Distribution of eigenvalues for some sets of random matrices.
\newblock \emph{Mathematics of the USSR-Sbornik}, 1\penalty0 (4):\penalty0 457,
  1967.

\bibitem[Mei \& Montanari(2022)Mei and Montanari]{mei2022generalization}
Mei, S. and Montanari, A.
\newblock The generalization error of random features regression: Precise
  asymptotics and the double descent curve.
\newblock \emph{Communications on Pure and Applied Mathematics}, 75\penalty0
  (4):\penalty0 667--766, 2022.

\bibitem[Mezzadri(2006)]{mezzadri2006generate}
Mezzadri, F.
\newblock How to generate random matrices from the classical compact groups.
\newblock \emph{arXiv preprint math-ph/0609050}, 2006.

\bibitem[Muthukumar et~al.(2021)Muthukumar, Narang, Subramanian, Belkin, Hsu,
  and Sahai]{muthukumar2021classification}
Muthukumar, V., Narang, A., Subramanian, V., Belkin, M., Hsu, D., and Sahai, A.
\newblock Classification vs regression in overparameterized regimes: Does the
  loss function matter?
\newblock \emph{Journal of Machine Learning Research}, 22\penalty0
  (1):\penalty0 10104--10172, 2021.

\bibitem[Nakkiran et~al.(2021)Nakkiran, Kaplun, Bansal, Yang, Barak, and
  Sutskever]{nakkiran2021deep}
Nakkiran, P., Kaplun, G., Bansal, Y., Yang, T., Barak, B., and Sutskever, I.
\newblock Deep double descent: Where bigger models and more data hurt.
\newblock \emph{Journal of Statistical Mechanics: Theory and Experiment},
  2021\penalty0 (12):\penalty0 124003, 2021.

\bibitem[Neyshabur et~al.(2014)Neyshabur, Tomioka, and
  Srebro]{neyshabur2014search}
Neyshabur, B., Tomioka, R., and Srebro, N.
\newblock In search of the real inductive bias: on the role of implicit
  regularization in deep learning.
\newblock \emph{arXiv preprint arXiv:1412.6614}, 2014.

\bibitem[Novak et~al.(2018)Novak, Bahri, Abolafia, Pennington, and
  Sohl-Dickstein]{novak2018sensitivity}
Novak, R., Bahri, Y., Abolafia, D.~A., Pennington, J., and Sohl-Dickstein, J.
\newblock Sensitivity and generalization in neural networks: an empirical
  study.
\newblock \emph{arXiv preprint arXiv:1802.08760}, 2018.

\bibitem[Paul \& Silverstein(2009)Paul and Silverstein]{paul2009no}
Paul, D. and Silverstein, J.~W.
\newblock No eigenvalues outside the support of the limiting empirical spectral
  distribution of a separable covariance matrix.
\newblock \emph{Journal of Multivariate Analysis}, 100\penalty0 (1):\penalty0
  37--57, 2009.

\bibitem[Pilanci(2016)]{pilanci2016fast}
Pilanci, M.
\newblock \emph{Fast Randomized Algorithms for Convex Optimization and
  Statistical Estimation}.
\newblock PhD thesis, University of California, Berkeley, 2016.

\bibitem[Raskutti \& Mahoney(2016)Raskutti and
  Mahoney]{raskutti2016statistical}
Raskutti, G. and Mahoney, M.~W.
\newblock A statistical perspective on randomized sketching for ordinary
  least-squares.
\newblock \emph{Journal of Machine Learning Research}, 17\penalty0
  (1):\penalty0 7508--7538, 2016.

\bibitem[Richards et~al.(2021)Richards, Mourtada, and
  Rosasco]{richards2021asymptotics}
Richards, D., Mourtada, J., and Rosasco, L.
\newblock Asymptotics of ridge (less) regression under general source
  condition.
\newblock In \emph{Proceedings of the twenty-fourth International Conference on
  Artificial Intelligence and Statistics}, pp.\  3889--3897. PMLR, 2021.

\bibitem[Woodruff(2014)]{woodruff2014sketching}
Woodruff, D.~P.
\newblock Sketching as a tool for numerical linear algebra.
\newblock \emph{Foundations and Trends in Theoretical Computer Science},
  10\penalty0 (1--2):\penalty0 1--157, 2014.

\bibitem[Yao et~al.(2015)Yao, Zheng, and Bai]{yao2015sample}
Yao, J., Zheng, S., and Bai, Z.
\newblock \emph{Sample Covariance Matrices and High-Dimensional Data Analysis}.
\newblock Cambridge University Press, Cambridge, 2015.

\bibitem[Zhang et~al.(2021)Zhang, Bengio, Hardt, Recht, and
  Vinyals]{zhang2021understanding}
Zhang, C., Bengio, S., Hardt, M., Recht, B., and Vinyals, O.
\newblock Understanding deep learning (still) requires rethinking
  generalization.
\newblock \emph{Communications of the ACM}, 64\penalty0 (3):\penalty0 107--115,
  2021.

\bibitem[Zhang(2007)]{zhang2007spectral}
Zhang, L.
\newblock \emph{Spectral Analysis of Large Dimensional Random Matrices}.
\newblock PhD thesis, National University of Singapore, Singapore, 2007.

\bibitem[Zheng et~al.(2015)Zheng, Bai, and Yao]{zheng2015substitution}
Zheng, S., Bai, Z., and Yao, J.
\newblock Substitution principle for clt of linear spectral statistics of
  high-dimensional sample covariance matrices with applications to hypothesis
  testing.
\newblock \emph{The Annals of Statistics}, 43\penalty0 (2):\penalty0 546--591,
  2015.

\end{thebibliography}

\newpage
\appendix 

\section*{Appendix}

\paragraph{Overview} 
The details of our numerical studies are included in Appendix \ref{appendix:sec:num}. We compare the computational cost between the sketched and full-sample estimators in Appendix \ref{appendix:sec:computation}. We provide the proofs for results under isotropic features in Appendix \ref{appendix:sec:iso} and the proofs for results under correlated features in Appendix \ref{appendix:proofs_risk}. The proof of Theorem \ref{thm:correlated_deterministic} is provided in Appendix \ref{appendix:proof_nonrandom}, and the proofs for the results on central limit theorems are presented in Appendix \ref{appendix:sec:clt}.

Throughout the appendix, we use $\norm{\cdot}_2$ for the spectral norm of a matrix and use $\norm{\cdot}$ for the $\ell_2$ norm of a vector.

\section{Details on numerical studies}\label{appendix:sec:num}

\subsection{Numerical studies for isotropic features}\label{appendix:numerical_sims_isotropic} 

This section provides additional details on 
the numerical studies for isotropic features to replicate Figures \ref{fig:fig_1}   and  \ref{fig:fig_2}. 

\subsubsection{Figure \ref{fig:fig_1}} \label{appendix:numerical_fig1} 

For Figure \ref{fig:fig_1}, numerical simulations were run $500$ replications. For each replication, we generated $\beta \sim \mathcal{N}_p\left(0, \frac{\alpha^2}{p} I_p\right)$ and a training dataset $(X, Y)$ with $n=400$ training samples, and a testing dataset $\left\{\left(x_{\text {new}, i}, y_{\text{new}, i}\right): 1 \leq i \leq n_{\text {new}}\right\}$ with $n_{\text {new}}=100$ testing samples. The feature, orthogonal sketching, and \iid sketching matrices were generated first and then fixed across all replications. The orthogonal sketching matrix was generated using subsampled randomized Hadamard transform, which relies on the fast Fourier transform. This approach is commonly regarded as a rapid and reliable method for implementing sketching algorithms \citep{dobriban2018asymptotics}. The feature matrix and \iid sketching matrices were generated using Python library \texttt{NumPy}. Other details are given in the caption of Figure \ref{fig:fig_1}. Our implementation is available at \url{https://github.com/statsle/SRLR_python}. 

The finite-sample risks, aka the dots and crosses in Figure \ref{fig:fig_1}, were calculated as functions of $\psi$. Specifically, given $n$, we varied $\psi$ by taking a grid of $\psi \in (0, 1)$ with $|\psi_i - \psi_{i+1}| = \delta$ for $\delta = 0.05$. This led to a grid of values for $m$, i.e., $m_i = [\psi_i n]$. For each replication $k$, we first randomly generated a coefficient vector $\beta(k)$. Within replication $k$ and for each $m_i$, we fitted a sketched ridgeless least square estimator $\widehat{\beta}(k)^{{S}_{m_i}}$ using the training dataset and calculated the empirical risks on the testing dataset:
\begin{equation}\label{equation:equation_A1_empirical}
\widehat{R}_{({S}_{m_i}, X)}\left(\widehat{\beta}^{{S}_{m_i}} ; \beta\right)=\frac{1}{500} \sum_{k=1}^{500}\left\{\frac{1}{n_{\text {new}}} \sum_{r=1}^{n_{\text {new}}}\left(x_{\text {new}_r}^{\top} \widehat{\beta}(k)^{{S}_{m_i}}-x_{\text {new}_r}^{\top} \beta(k)\right)^2\right\}.
\end{equation}

\subsubsection{Figure \ref{fig:fig_2}} \label{appendix:numerical_fig2}

The finite-sample risks, aka the dots in Figure \ref{fig:fig_2}, were calculated as functions of $\phi$. Numerical simulation procedure and data generation followed Section \ref{appendix:numerical_fig1}. The optimal sketching size $m^*$ was selected based on Theorem \ref{optimal_sketching_size}. If $m^* = n$, we fitted a ridgeless least square estimator $\widehat{\beta}$ on the training set; if $m^* < n$, we fitted a sketched estimator $\widehat{\beta}^S$ with $m^*$. The empirical risks $\widehat{R}_{({S}, X)}\left(\widehat{\beta}^{{S}} ; \beta\right)$ were evaluated on the testing dataset in a similar way as in Equation \eqref{equation:equation_A1_empirical}. 
To indicate how SNR and $\phi$ influence the selection of $m^*$, the left panel of Figure \ref{fig:fig_2} presents risks for $\SNR < 1$, and the right panel presents risks for $\SNR > 1$. 

\subsection{Numerical studies for correlated features}\label{appendix:numerical_sims}

This section provides additional details on numerical studies for correlated features  to replicate Figures \ref{fig:fig_3}
and  \ref{fig:fig_4}. 

\subsubsection{Figure \ref{fig:fig_3}} \label{appendix:numerical_fig3}
The numerical simulation procedure generally followed Section \ref{appendix:numerical_fig1}. Instead of isotropic features, we generated correlated features. Other details are given in the caption of Figure \ref{fig:fig_3}. 

\subsubsection{Figure \ref{fig:fig_4}} \label{appendix:numerical_fig4}

\begin{figure}[t!]
    \centering
    \includegraphics[width=\columnwidth]{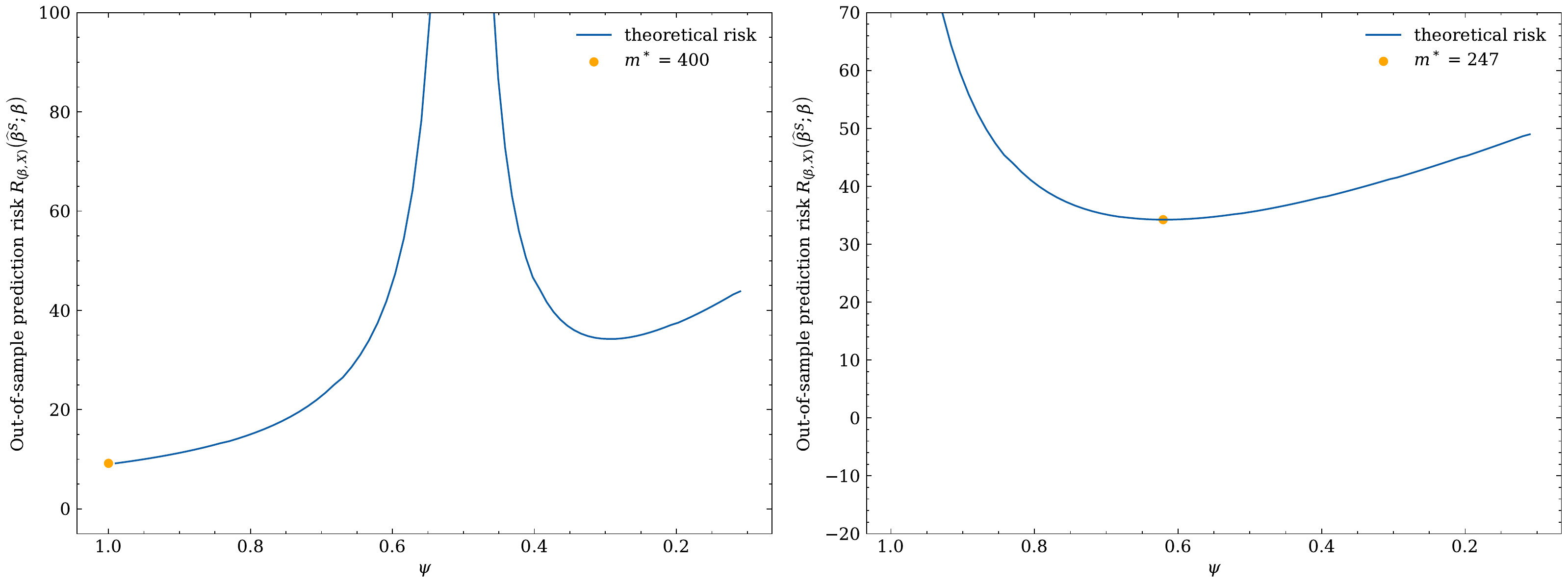}
    \caption{{ Asymptotic risk curves for  sketched ridgeless least square estimators with correlated features, orthogonal sketching, as functions of $\psi$. The lines are theoretical risk curves for $n = 400$ with $p = 200$ and $p = 424$ respectively, where $\SNR = \alpha/\sigma = 2$ with $(\alpha, \sigma)= (6, 3)$,  $\psi$ varying in $(0,1)$, and $m=[n \psi]$. The dot marks the minimum of a theoretical risk curve within $\psi \in (0, 1)$.} }
    \label{fig:fig_5}
\end{figure}

The simulation procedure followed Section \ref{appendix:numerical_fig2} and the data generation followed Section \ref{appendix:numerical_fig3}. 
In the case of correlated  features, the theoretically optimal sketching size $m^*$ does not have a closed-form representation, and $m^*$ can be picked by  minimizing the theoretical risk function across a set of values for $m$. Specifically, given fixed $p$ and $n$, we varied $\psi$ by taking a grid of $\psi \in (0, 1)$ with $|\psi_i - \psi_{i+1}| = \delta$ for $\delta=0.05$. This led to a set of potential values for $m^*$, i.e., $m_{i} = [\psi_{i} n]$. For each $m_i$, we calculated the negative solutions $c_0$ in \eqref{equation:a} and \eqref{equation:correlated_under} numerically using the functon \texttt{fsolve} in the Python library \texttt{SciPy}. These values of $c_0$ were then used to generate the theoretical risk curves in the overparameterized and underparameterized regime as described in Theorem \ref{thm:correlated_over} and Theorem \ref{correlated_under_biasvariance}, respectively. The optimal sketching size $m^*$ was selected as the one that minimized the theoretical risks across all $m_i$. With the optimal sketching size $m^*$, the empirical risks were calculated the same way as in Section \ref{appendix:numerical_fig2}. 

We further illustrate how the sketching size was selected using two examples shown in Figure \ref{fig:fig_5}. For $p = 200$ in the left panel, the risk attained minimum at $\psi \approx 1$ and $m^* = 400$, so  no sketching was needed. For $p = 424$ in the right panel, the risk attained minimum at $\psi \approx 0.6175$ and we set $m^* = 0.6175 \times 400 = 247$.

\section{Computational cost}\label{appendix:sec:computation}

We analyze the computational cost when the optimal sketching size $m^*$ is given 
 {\it a priori}. The time for the full sketching and orthogonal sketching (realized by the subsampled randomized Hadamard transform) is
\begin{align*}
    t_{\text{full}} = C_1np^2, ~ t_{\text{orthogonal}} = C_2pn\log n + C_3m^*p^2,
\end{align*}
where $C_1,C_2,C_3$ are some  constants. It is clear that the optimal orthogonal sketching can reduce computational costs when the condition $\frac{C_3}{C_1}\frac{m^*}{n} + \frac{C_2}{C_1}\frac{\log n}{p} < 1$ is satisfied. This condition is typically met in the overparameterized regime, where $p$ is large compared to $n$.

We conducted timing experiments on a MacMini with an Apple M1 processor and 16 GB of memory to measure the computational time required for the full-sample (no sketching) and sketched ridgeless least square estimators with orthogonal sketching (implemented through the subsampled randomized Hadamard transform) under  isotropic features. These experiments were designed to investigate the impact of the sketching size $m$, sample size $n$, and feature dimension $p$ on the computational time. To mitigate variations resulting from runtime disparities, we computed the average time from $10$ separate runs. 

Figure \ref{fig:fig_a1} compares the run time in seconds for different values of $p$ in both the underparameterized and overparameterized regimes, with a fixed sample size of $n = 10,000$. The figure demonstrates a significant computational benefit of sketching in the overparameterized regime. In this regime, as the feature dimension $p$ further deviates from the sample size $n$, sketching becomes increasingly time-efficient. Notably, when $p=11,500$, sketching saves time for almost every $\psi$. 

In Figure \ref{fig:fig_a2}, we fix the aspect ratio $\phi = p/n$  and the SNR to be $\SNR =\alpha/\sigma = 3 $ with $(\alpha, \sigma) = (6, 2)$, which implies a fixed optimal sketching ratio $\psi^*:= m^*/n$, while varying the sample size $n$. In this scenario, as the sample size $n$ increases, the optimal orthogonal sketching becomes even more time-efficient. This observation encourages the use of sketching when dealing with large sample sizes, which aligns with our intuition.

Figure \ref{fig:fig_a3} illustrates a scenario with a fixed aspect ratio $\phi$ and different $\SNR$, which results in smaller values for the optimal sketching size $m^*$. In such cases, employing the optimal orthogonal sketching significantly reduces computational time. 

In closing, we expect even larger computational improvements when using sketching in more complex models, such as neural networks, while simultaneously mitigating the out-of-time prediction risk. We leave  these experiments for future research.

\begin{figure}[t!]
    \centering
    \includegraphics[width=\columnwidth]{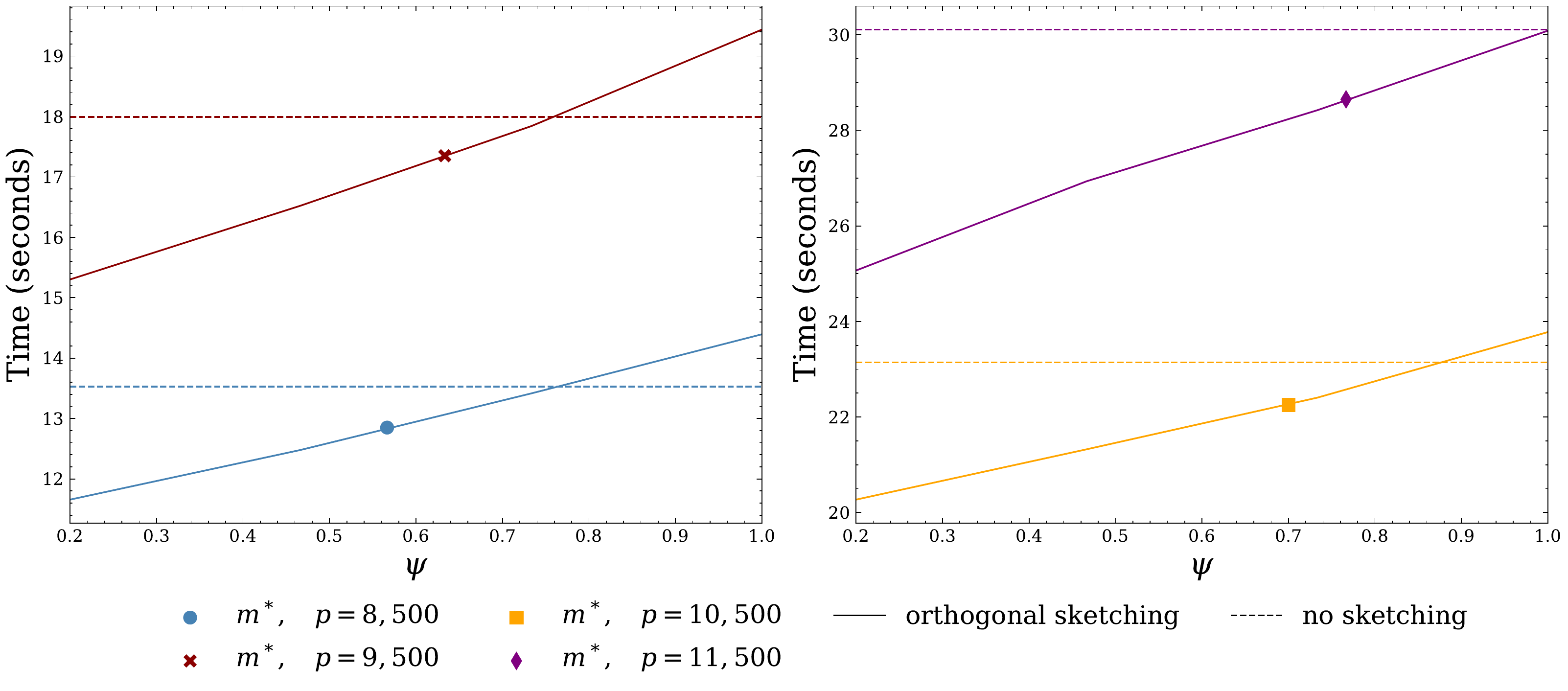}
    \caption{{Computational costs associated with the full-sample (no sketching) and sketched ridgeless least square estimators with orthogonal sketching under isotropic features, as functions of $\psi$, for a fixed sample size of $n=10,000$ and varying $p$.  The left panel shows the time required for the full-sample and orthogonally sketched estimators in the underparameterized regime, represented by the dotted and solid lines, respectively. The right panel depicts the time for estimators in the overparameterized regime. The dots and crosses mark the computational time required for the sketched estimators with the optimal sketching size $m^*$. We set $\SNR = \alpha/\sigma = 3$ with $(\alpha, \sigma) = (6, 2)$.} }
    \label{fig:fig_a1}
\end{figure}

\begin{figure}[t!]
    \centering
    \includegraphics[width=\columnwidth]{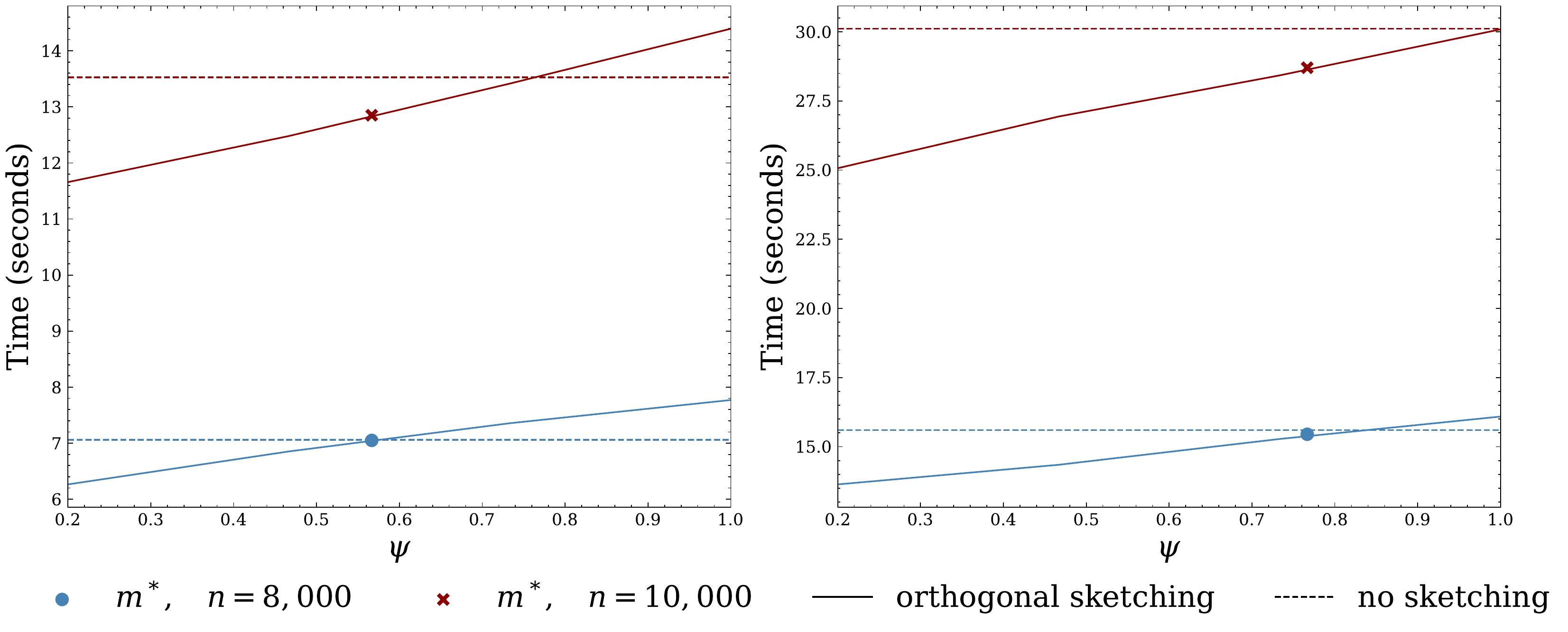}
    \caption{{Computational costs associated with the full-sample (no sketching) and sketched ridgeless least square estimators with orthogonal sketching under  isotropic features, as functions of $\psi$, for fixed $\phi=p/n$. The left panel shows the time required for the full-sample and orthogonally sketched estimators in the underparameterized regime with $\phi=0.85$, represented by the dotted and solid lines, respectively. The right panel depicts the time for estimators in the overparameterized regime  with $\phi=1.15$. The dots and crosses mark the computational time required for the sketched estimators with the optimal sketching size $m^*$. We set $\SNR = \alpha/\sigma = 3$ with $(\alpha, \sigma) = (6, 2)$.} }
    \label{fig:fig_a2}
\end{figure}

\begin{figure}[t!]
    \centering
    \includegraphics[width=\columnwidth]{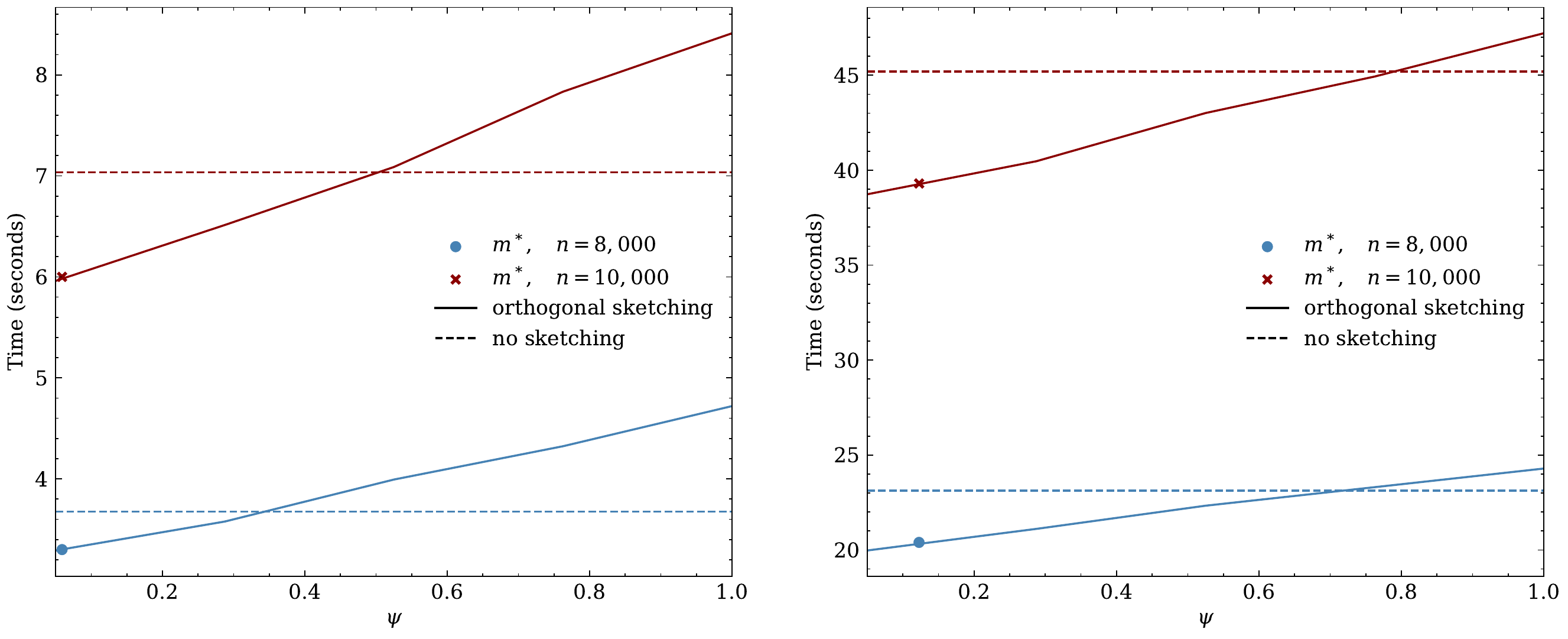}
    \caption{{Computational costs associated with the full-sample (no sketching) and sketched ridgeless least square estimators with  orthogonal sketching  under isotropic features , as functions of $\psi$, for fixed $\phi=p/n$. The left panel shows the time required for the full-sample and orthogonally sketched estimators in the underparameterized regime with $\phi=0.65$, represented by the dotted and solid lines, respectively. The right panel depicts the time for estimators in the overparameterized regime  with $\phi=1.35$. The dots and crosses mark the computational time required for the sketched estimators with optimal sketching size $m^*$. We set $\SNR = \alpha/\sigma = 1.1$, where $(\alpha, \sigma) = (22, 20)$.} }
    \label{fig:fig_a3}
\end{figure}

\section{Proofs for isotropic features}\label{appendix:sec:iso}

\subsection{Proof of Lemma \ref{lemma_bias_variance}}

Because  
\begin{align*}
\mathbb{E} \rbr{\hat\beta^S|\beta,S,X} &= (X^\top  S^\top  S X)^+ X^\top  S^\top  S X\beta, \\ 
{\rm Cov} \rbr{\hat\beta^S|\beta,S,X} = \sigma^2(&X^\top  S^\top  SX)^+ X^\top  S^\top  S S^\top  SX (X^\top  S^\top  SX)^+,    
\end{align*}
we can derive the expressions of $B_{(\beta,S,X)}\rbr{\hat{\beta}^S;\beta}$, $V_{(\beta, S,X)}\left(\hat\beta^{S}; \beta\right)$ and $V_{( S,X)}\left(\hat\beta^{S}; \beta\right)$ from their respective definitions. $B_{(S,X)}\rbr{\hat{\beta}^S;\beta}$ expression follows from the formula of the expectation of quadratic form and the fact that $(X^\top  S^\top  S X)^+ X^\top  S^\top  S X$ is idempotent.

Now since the eigenvalues of $I_p - (X^\top  S^\top  S X)^+ X^\top  S^\top  S  X $ are either 0 or 1, the eigenvalues of $\sbr{(X^\top  S^\top  S X)^+ X^\top  S^\top  S  X -I_p} \Sigma 
\sbr{(X^\top  S^\top  S X)^+ X^\top  S^\top  S  X -I_p}$ are uniformly bounded over $[0,C_1]$. Then \eqref{two_risk_equal_limit} can be obtained by applying   \cite[Lemma B.26]{bai2010spectral}. 

\subsection{Proof of Theorem \ref{thm:isotropic_limiting_risk}}

The proof for the underparameterized case directly follows from the Corollary \ref{Corollary:under_special_sketching} since the bias vanishes. As for the overparameterized case, according to  Theorem \ref{thm:correlated_over},  when $\Sigma = I_p$,  a simple calculation shows $c_0 = \psi\phi^{-1} -1$, and $B_{(S,X)}(\hat\beta^{S};\beta) \to \alpha^2 (1-\psi \phi^{-1})$, $V_{(S,X)}\left(\hat\beta^{S}; \beta\right) \to \sigma^2 (\phi\psi^{-1}-1)^{-1}$, which then leads to the desired results. We collect the proofs for Corollary \ref{Corollary:under_special_sketching} and Theorem \ref{thm:correlated_over} in Sections \ref{subsubsec:proof4_4} and \ref{subsubsec:proof4_2} respectively. 

\subsection{Proof of Theorem \ref{optimal_sketching_size}}\label{sketching_size}
We prove this theorem for orthogonal and \iid sketching separately. 

\noindent{\bf Orthogonal sketching}~ We start with orthogonal sketching first. Let
\$
f (x)=\alpha^2(1-x^{-1})+\frac{\sigma^2}{x-1},\ x>1.
\$ 
According to Theorem~\ref{thm:isotropic_limiting_risk}, for orthogonal sketching, both limiting risks in the overparameterized regime are $f(\phi\psi^{-1})$.

For the case of $\alpha \le \sigma$, i.e., ${\rm SNR}\le 1$, 
\$
f' (x)=\alpha^2 x^{-2} -\frac{\sigma^2}{(x-1)^2}
=\frac{(\alpha^2-\sigma^2)x^2-2\alpha^2 x +\alpha^2}{x^2(x-1)^2}
< 0,\ \forall x>1,
\$
and $\lim_{x\rightarrow \infty}f (x)=\alpha^2$. Thus, if $\phi \geq1$, $f(\phi \psi^{-1})$ decreases as $\psi$ decreases. If $\phi <1$, the limiting risks without sketching ($\psi=1$) are $\frac{\sigma^2\phi }{1-\phi}$, which exceeds $\alpha^2$ if and only if  $\phi > \frac{\alpha^2}{\alpha^2+\sigma^2}$. 
So the optimal sketching size is $m^*\ll n$ if $\phi > \frac{\alpha^2}{\alpha^2+\sigma^2}$, and $m^*=n$ otherwise.

For the case of $\alpha > \sigma$, i.e., ${\rm SNR}> 1$, $f(x)$ decreases when $x\in (1, \frac{\alpha}{\alpha-\sigma})$, and increases when $x\in [\frac{\alpha}{\alpha-\sigma}, \infty)$, and $f (\frac{\alpha}{\alpha -\sigma})=\sigma (2\alpha -\sigma)$. Thus, if $\frac{\alpha}{\alpha-\sigma} \geq \phi>1$,  orthogonal sketching can help reduce the limiting risks to  $\min_{x>1}f(x)$.  If $\phi<1$, the same improvement holds if and only if $\frac{\sigma^2 \phi }{1-\phi }>\sigma (2\alpha-\sigma)$, or equivalently $\phi>1-\frac{\sigma}{2\alpha}$. Thus, the optimal sketching size is $m^*=\phi \frac{\alpha-\sigma}{\alpha}\cdot n=\frac{\alpha-\sigma}{\alpha}\cdot p$ if $1-\frac{\sigma}{2\alpha}<\phi \le \frac{\alpha }{\alpha -\sigma}$; and $m^* = n$ otherwise.

{
\noindent{\bf \iid sketching}~
Because in the underparameterized case, sketching always increases the risk which follows the classic statistical intuition: a larger sample size is better. In other words, only sketching to the overparameterized case can help  reduce the risk. Because  \iid sketching shares the same limiting risks with orthogonal sketching in the overparameterized regime, it has the same optimal sketching size. 
}

\section{Proofs for correlated features}\label{appendix:proofs_risk}
\subsection{Proofs for the over-parameterized case}\label{appendix:proof_over}
\subsubsection{Proof of Lemma \ref{lemma:unique_neg_solution}}
Let $f(c) = 1 -  \int \frac{x}{-c+x\psi \phi^{-1}} \, dH(x)$. Because $f(0) = 1 - \phi\psi^{-1} <0$, $f(-\infty) = 1$, and $f$ is smooth, $f$ has at least one negative root. Suppose $c_1$ and $c_2$ are two negative roots with $c_1 < c_2$. Then we have 
\$
0 = f(c_1) -f(c_2) =  \int \frac{x(c_2-c_1)}{(-c_2+x\psi \phi^{-1})(-c_1+x\psi \phi^{-1})} \, dH(x) > 0,
\$
where the last inequality follows from the fact that the numerator and denominator are both larger than 0. This is a contradiction and thus $f$ has a unique negative root.

\subsubsection{Proof of Theorem \ref{thm:correlated_over}} \label{subsubsec:proof4_2}
\paragraph{Bias part}
To prove the bias part \eqref{equation:correlated_over_bias}, we first need some lemmas. Lemmas \ref{lemma:appendix_support} and \ref{lemma:appendix_sigmalambdamin} show that the minimal nonzero eigenvalues of conrresponding matrices are lower bounded, which will be used to guarantee to  exchange limits.  Lemma \ref{lemma:appendix_support} also proves, in the overparameterized case, $\frac{1}{p}SZZ^\top  S^\top $ is invertible almost surely for all large $n$. 

\begin{lemma}\label{lemma:appendix_support}
    Let $Z \in \RR^{n \times p}$ be a matrix with \text{i.i.d.} entries $Z_{ij}$ such that $\EE[Z_{ij}]=0$, $\EE[Z_{ij}^2] = 1$, and $\EE[Z_{ij}^4]<\infty$. Assume Assumption \ref{assumption_sketchingMatrix} and suppose $m,n,p \to \infty$ such that  $p/n \to \phi$, $m/n \to \psi \in (0,1)$. Then, there exists some constant $\tau >0$ such that almost surely for all large $n$, it holds that $\lambda_{\min}^+\rbr{\frac{1}{p}SZZ^\top  S^\top } = \lambda_{\min}^+\rbr{\frac{1}{p}Z^\top  S^\top  SZ} \geq \tau$, where $\lambda_{\min}^+$ denotes the smallest nonzero eigenvalue. 
    Furthermore, if (i) $\phi\psi^{-1} > 1$, then almost surely for all large $n$, $\frac{1}{p}SZZ^\top  S^\top $ is invertable; (2) $\phi\psi^{-1} < 1$, then almost surely for all large $n$, $\frac{1}{p}Z^\top  S^\top  SZ$ is invertible. 
\end{lemma}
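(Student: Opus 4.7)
The plan is to reduce both claims to a single statement about the smallest non-zero eigenvalue of a ``separable-covariance''-type matrix, to which the results already cited in the paper (Silverstein, 1995; Zhang, 2007; El Karoui, 2009; Knowles, 2017) apply. Since $AB$ and $BA$ share their non-zero spectra for any two conformable matrices, the equality $\lambda_{\min}^+(\frac{1}{p}SZZ^\top S^\top) = \lambda_{\min}^+(\frac{1}{p}Z^\top S^\top S Z)$ is immediate. Applying the same cyclic identity once more and writing $T = S^\top S$, one has $\frac{1}{p}Z^\top T Z$ and the $n\times n$ matrix $\frac{1}{p} T^{1/2} Z Z^\top T^{1/2}$ sharing their non-zero spectra, and the latter is now in the standard form $A^{1/2} X X^\top A^{1/2}$ with $A = T$ deterministic (conditional on $S$) and $X = Z$ having i.i.d.\ entries with bounded $(4+\eta)$-th moment.

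By Assumption~\ref{assumption_sketchingMatrix}, almost surely for all large $n$ the non-zero eigenvalues of $T$ (which coincide with those of $SS^\top$) lie in $[\tilde C_0, \tilde C_1]$ with $\tilde C_0 > 0$, and the associated ESD converges weakly to $B$. Silverstein's limiting-spectral-distribution theorem then identifies a compactly supported limit for $\frac{1}{p} T^{1/2} Z Z^\top T^{1/2}$ on its non-trivial eigenspace, and the Bai--Silverstein ``no eigenvalues outside the support'' theorem implies that, almost surely for all large $n$, every non-zero eigenvalue of this matrix lies in a small neighborhood of this support. Because the effective aspect ratio in our regimes equals $\min(\psi/\phi, \phi/\psi)$ and is strictly less than $1$ (since $\phi\psi^{-1} \ne 1$), and because the population spectrum is bounded below by $\tilde C_0 > 0$, the left edge of the limiting support is a deterministic positive constant $\tau$. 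This yields $\lambda_{\min}^+ \ge \tau$ almost surely for all large $n$.

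For the invertibility statements, the same ``no eigenvalues outside the support'' conclusion further implies that the smaller of the two matrices has all of its eigenvalues trapped in a small neighborhood of the positive limiting support, so none of them is zero for large $n$. When $\phi\psi^{-1}>1$, the smaller matrix is the $m\times m$ matrix $\frac{1}{p}SZZ^\top S^\top$, giving its almost-sure invertibility; when $\phi\psi^{-1}<1$, the smaller matrix is the $p\times p$ matrix $\frac{1}{p}Z^\top S^\top S Z$, giving the analogous conclusion.

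The main technical obstacle I anticipate is separating the ``genuine'' spectrum of $\frac{1}{p}T^{1/2}ZZ^\top T^{1/2}$, of rank at most $\min(m,p)$, from the $n-\min(m,p)$ spurious zero eigenvalues inherited from the null space of $T^{1/2}$; one must ensure these do not contaminate the lower bound. The cited separable-covariance theory naturally accommodates this because $T^{1/2}$ annihilates its null space before $Z$ acts, so the limiting distribution of the non-trivial block involves only the non-zero eigenvalues of $T$ (i.e.\ the spectrum of $SS^\top$) and is supported on some $[a,b]$ with $a>0$. With this distinction in place, the no-eigenvalues-outside-the-support result delivers exactly the uniform lower bound $\tau$ claimed by the lemma.
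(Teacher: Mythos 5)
Your overall strategy matches the paper's: reduce to a separable-covariance matrix $\frac{1}{p}T^{1/2}ZZ^\top T^{1/2}$ via the cyclic identity, invoke the LSD theory, and then use the Bai--Silverstein ``no eigenvalues outside the support'' theorem to turn information about the limiting support into an almost-sure eigenvalue bound. However, your proof asserts, without verification, precisely the two facts that constitute the technical content of the lemma, so as written it has a genuine gap.

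First, you claim that because the effective aspect ratio is $\ne 1$ and the non-zero population spectrum lies in $[\tilde C_0, \tilde C_1]$, ``the left edge of the limiting support is a deterministic positive constant $\tau$.'' This is not an automatic consequence of the LSD existence theorem; it is a statement about the \emph{support} of that LSD, and establishing it requires the Silverstein--Choi / $\Psi$-function machinery. The paper does exactly this: it invokes the characterization $\Gamma^c \cap (0,\infty) = \{\Psi(a):\Psi'(a)>0\}$ (Yao et al., Prop.~2.17), computes $\Psi'(a)$ explicitly, and checks that $\lim_{a\to 0^+}\Psi'(a) = 1-\phi^{-1}\psi$ (resp.\ $\lim_{a\to 0^-}\Psi'(a)$) has the right sign in each regime. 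Without this computation, the possibility that the left edge of the non-trivial support is $0$---as indeed happens when $\phi\psi^{-1}=1$---is not excluded, so one cannot conclude a uniform $\tau>0$.

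Second, for the invertibility claims, you argue that the smaller matrix has all its eigenvalues trapped near the positive support. This implicitly assumes the LSD of the \emph{smaller} matrix has no atom at $0$; Bai--Silverstein only keeps eigenvalues inside the support, and an atom at $0$ is part of the support. The paper handles this by computing $\underline\mu(\{0\})$ via Couillet's Proposition 2.2 and showing it vanishes because $B(\{0\})=0$. Your write-up correctly identifies the ``spurious zeros from the null space of $T^{1/2}$'' as the main obstacle, but resolving it requires exactly this computation (or some other rank argument), which you defer to ``the cited separable-covariance theory'' without carrying it out. To close the gap you should add the $\Psi$-function analysis for the support's left edge and a verification (via the atom-at-zero formula or a direct rank/injectivity argument) that the smaller matrix has no zero eigenvalues.
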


\begin{proof}[Proof of Lemma \ref{lemma:appendix_support}]
Denote the limiting spectral measure of $\frac{1}{p}Z^\top  S^\top  SZ$ by $\mu$.   By \cite[Proposition 2.17]{yao2015sample}, the support of $\mu$ is completely determined by $\Psi(\alpha)$, known as the functional inverse of the function $a(x):=-1/s(x)$, where $s(x)$ is the Stieltjes transform of $\mu$. Specifically, if we let $\Gamma$ be the support of $\mu$, then $\Gamma^c \cap (0, \infty) = \{\Psi(a): \Psi'(a) > 0\}$. Under Assumption \ref{assumption_sketchingMatrix}, we can assume that the ESD of $S^\top  S$ converges to a nonrandom measure $\underline{B}$, which is the companion of $B$. Then,  $\Psi(a) = a + \phi^{-1}a\int \frac{t}{a-t} \, d\underline{B}(t)$, and hence $\Psi'(a) = 1+ \phi^{-1}\int \frac{t}{a-t} \, d\underline{B}(t) - \phi^{-1}a\int \frac{t}{(a-t)^2} \, d\underline{B}(t)$, which is smooth. 
    
    (i) If $\phi^{-1}\psi < 1$, then $\lim_{a\to 0^+}\Psi'(a) = 1 - \phi^{-1}(1-\underline{B}(\{0\}))  = 1 - \phi^{-1}\psi > 0$. Thus, there exists some small enough $\epsilon>0$ such that $\Psi$ is  increasing  on $(0, \epsilon)$. Besides, under Assumption \ref{assumption_sketchingMatrix}, the support of $\underline{B}$ is a subset of $\{0\}\cup [\tilde{C}_0, \tilde{C}_1]$. Thus, when $\epsilon$ is small enough, $\Psi$ is well defined on $(0, \epsilon)$. Since $\lim_{a\to 0 ^+}\Psi(a) = 0$ and $\Psi'$ is smooth, we know that there exists some $\tau > 0$ such that $\{\Psi(a): \Psi'(a) > 0\} \supseteq (0, \tau)$. 
    
    (ii) If $\phi^{-1}\psi > 1$, then $\lim_{a\to 0^-}\Psi'(a) <0$, and hence we can find some small enough $\epsilon$ such that $\Psi$ is decreasing on $(-\epsilon, 0)$. Since $\lim_{a \to -\infty}\Psi(a) = -\infty$, and by the smoothness of $\Psi$, we know $\{\Psi(a): \Psi'(a) > 0\} \supseteq (0, \Psi(-\epsilon))$. Overall, by combining these two situations and using  \cite[Theorem 1.1]{bai1998no}, we can show that there exists some $\tau > 0$ such that almost surely for all large $n$, $\lambda^+_{\min}(\frac{1}{p}Z^\top  S^\top  S Z) \geq \tau$.

To prove the invertibility of $\frac{1}{p}SZZ^\top  S^\top $ when $\phi\psi^{-1} > 1$, we first denote the limiting spectral measure of $\frac{1}{p}SZZ^\top  S^\top $ by $\underline{\mu}$. With  \cite[Proposition 2.2]{couillet2014analysis}, it holds that $\underline{\mu}\rbr{\{0\}} = 1 - \min\{1 - B\rbr{\{0\}}, \frac{n}{m}\min\{\frac{p}{n}, 1\}\} = 0$ since  $B\rbr{\{0\}} = 0$ under Assumption \ref{assumption_sketchingMatrix}. We thus obtain the invertibility of $\frac{1}{p}SZZ^\top  S^\top $ almost surely for all large $n$ by using again \cite[Theorem 1.1]{bai1998no}.   When $\phi\psi^{-1} < 1$, the invertibility of $\frac{1}{p}Z^\top  S^\top  SZ$ follows from a similar argument.
\end{proof}

\begin{lemma}\label{lemma:appendix_sigmalambdamin}
    Let $a,b >0$ be two positive constants. Let $A \in \RR^{n\times n}$ be a positive semidefinite matrix such that $\lambda_{\min}^+\rbr{A} \geq a$ and $\Sigma \in\RR^{n\times n} $  a positive definite matrix such that $\lambda_{\min} \rbr{\Sigma} \geq b$. Then, $\lambda_{\min}^+\rbr{\Sigma^{1/2}A\Sigma^{1/2}} \geq ab$. 
\end{lemma}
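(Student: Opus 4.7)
The idea is to factor $A = B^\top B$, reduce the problem to lower bounding $\lambda_{\min}^+(B\Sigma B^\top)$, and exploit the fact that invertibility of $\Sigma$ forces this matrix to share its range with $BB^\top$, so that the variational principle for the smallest nonzero eigenvalue can be applied on a common subspace.

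First I would write $A = B^\top B$, using positive semidefiniteness (e.g.\ take $B = A^{1/2}$). Since $\Sigma^{1/2} A \Sigma^{1/2} = (B\Sigma^{1/2})^\top (B\Sigma^{1/2})$ and $(B\Sigma^{1/2})(B\Sigma^{1/2})^\top = B\Sigma B^\top$ share their nonzero eigenvalues, it suffices to prove $\lambda_{\min}^+(B\Sigma B^\top) \geq ab$. In parallel, $B^\top B$ and $BB^\top$ share nonzero eigenvalues as well, so $\lambda_{\min}^+(BB^\top) = \lambda_{\min}^+(A) \geq a$.

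Next, because $\Sigma$ is invertible, $\mathrm{rank}(B\Sigma B^\top) = \mathrm{rank}(B) = \mathrm{rank}(BB^\top)$, and both matrices have range contained in $\mathrm{Range}(B)$; by dimension count, $\mathrm{Range}(B\Sigma B^\top) = \mathrm{Range}(BB^\top)$. This identity is exactly what is needed to compare the variational expressions for $\lambda_{\min}^+$ of the two matrices on the same subspace. Then for any nonzero $v \in \mathrm{Range}(BB^\top)$,
\[
v^\top B\Sigma B^\top v \;=\; (B^\top v)^\top \Sigma (B^\top v) \;\geq\; b\,\|B^\top v\|^2 \;=\; b\, v^\top BB^\top v \;\geq\; ab\,\|v\|^2,
\]
where the first inequality uses $\lambda_{\min}(\Sigma) \geq b$ and the last uses $v \in \mathrm{Range}(BB^\top)$ together with $\lambda_{\min}^+(BB^\top) \geq a$. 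Taking the infimum over all such $v$ yields $\lambda_{\min}^+(B\Sigma B^\top) \geq ab$, and hence $\lambda_{\min}^+(\Sigma^{1/2} A \Sigma^{1/2}) \geq ab$.

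No serious obstacle is expected: positive definiteness of $\Sigma$ makes the range identity automatic, after which the bound is essentially a one-line sandwich estimate. The only mild bookkeeping is to ensure that the variational principle for $\lambda_{\min}^+$ is applied on the orthogonal complement of the kernel (equivalently, the range, since both matrices are symmetric and PSD), which the matching of ranges delivers.
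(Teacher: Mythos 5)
Your proof is correct, and it takes a genuinely different route than the paper's. The paper works directly with the Rayleigh quotient of $\Sigma^{1/2}A\Sigma^{1/2}$: it expresses $\lambda_{\min}^+\left(\Sigma^{1/2}A\Sigma^{1/2}\right)$ via a minimum over $\{x : x^\top\Sigma^{1/2}A\Sigma^{1/2}x \neq 0\}$, factors the quotient as $\frac{x^\top\Sigma^{1/2}A\Sigma^{1/2}x}{\|\Sigma^{1/2}x\|^2}\cdot\frac{\|\Sigma^{1/2}x\|^2}{\|x\|^2}$, and bounds the two factors by $a$ and $b$. You instead pass to $A^{1/2}\Sigma A^{1/2}$ via the $C^\top C$ versus $CC^\top$ spectrum identity and bound its Rayleigh quotient on $\mathrm{Range}(A)$. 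The distinction is not cosmetic. When $A$ is rank-deficient, which is precisely the regime in which the lemma is later invoked, the infimum of a singular PSD matrix's Rayleigh quotient over $\{x : x^\top M x \neq 0\}$ is $0$ rather than $\lambda_{\min}^+(M)$; and even if one restricts to $\mathrm{Range}\left(\Sigma^{1/2}A\Sigma^{1/2}\right)$, the substitution $y = \Sigma^{1/2}x$ carries that range to $\Sigma\,\mathrm{Range}(A)$, on which $y^\top A y / \|y\|^2$ need not be bounded below by $a$. Your route sidesteps both issues: because $\Sigma$ is invertible, $\mathrm{Range}\left(A^{1/2}\Sigma A^{1/2}\right) = \mathrm{Range}(A)$, so both Rayleigh-quotient bounds in your chain are applied on the same, correct subspace. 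In short, the idea of splitting the quotient into a $\Sigma$-factor and an $A$-factor is common to both arguments, but your range bookkeeping is what makes the estimate airtight, and it patches a gap in the proof as written in the paper.
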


\begin{proof}[Proof of Lemma \ref{lemma:appendix_sigmalambdamin}]
The result follows from
\begin{align*}
\lambda_{\min}^+\rbr{\Sigma^{1/2}A\Sigma^{1/2}} &\geq \min_{x\in \RR^n:x^\top \Sigma^{1/2}A\Sigma^{1/2}x \neq 0} \frac{x^\top \Sigma^{1/2}A\Sigma^{1/2}x}{\nbr{x}^2}\\
& \geq \min_{x\in \RR^n:x^\top \Sigma^{1/2}A\Sigma^{1/2}x \neq 0} \frac{x^\top \Sigma^{1/2}A\Sigma^{1/2}x}{\nbr{\Sigma^{1/2}x}^2}\cdot \min_{x\in \RR^n:x \neq0} \frac{\nbr{\Sigma^{1/2}x}^2}{\nbr{x}^2}\\
&= ab.
\end{align*} 
\end{proof}

\begin{lemma}\label{lemma:appendix_equation}
    Assume Assumption \ref{assumption_general} and suppose $\phi,\psi>0$.  Then for any $z<0$,  the following equation \eqref{equation:appendix_b} has a unique negative solution $c(z) = c(z, \phi,\psi,H)$,
\begin{align}\label{equation:appendix_b}
    c(z) = \int \frac{(z+c(z))x}{-z-c(z)+x\psi \phi^{-1}} \, dH(x).
\end{align}
Furthermore, $\lim_{z\to 0^{-}}c(z) = c_0$ where $c_0$ is defined by \eqref{equation:a}.
\end{lemma}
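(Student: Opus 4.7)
My plan is to reduce the implicit equation \eqref{equation:appendix_b} to a one-dimensional monotonicity problem via the change of variable $u = -z-c$, so that $c<0$ (with $z<0$) corresponds to some $u>0$. Substituting into \eqref{equation:appendix_b} and factoring out $-u$ from the numerator yields
\[
c \;=\; -u\int\frac{x}{u+x\psi\phi^{-1}}\,dH(x) \;=:\; -u\,I(u),
\]
while the identity $c = -z-u$ forces the scalar equation
\[
g(u) \;:=\; u\bigl(1-I(u)\bigr) \;=\; -z.
\]
Thus finding a negative root $c(z)$ of \eqref{equation:appendix_b} is equivalent to finding $u>0$ with $g(u) = -z > 0$, and the map $c \leftrightarrow u$ is a bijection between the relevant regions.

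Next I would study $I$ and $g$ on $(0,\infty)$. Under Assumption~\ref{assumption_general}, $H$ is supported in $[C_0,C_1]\subset(0,\infty)$, so $I$ is smooth, strictly decreasing, with $I(0^+)=\phi\psi^{-1}$ and $I(\infty)=0$. Differentiating gives
\[
g'(u) \;=\; \bigl(1-I(u)\bigr) \;-\; u\,I'(u),
\]
and $I'(u) = -\int x/(u+x\psi\phi^{-1})^2\,dH(x) < 0$ makes the second term strictly positive. Hence on the set $\{u>0 : I(u)\le 1\}$, $g$ is strictly increasing. Letting $u_*=0$ if $\phi\psi^{-1}\le 1$ and letting $u_*$ be the unique positive root of $I(u)=1$ if $\phi\psi^{-1}>1$ (existence and uniqueness in the latter case coming from Lemma~\ref{lemma:unique_neg_solution} with the identification $c_0 = -u_*$), I get that $g$ maps $(u_*,\infty)$ bijectively and strictly monotonically onto $(0,\infty)$. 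For any $z<0$, the equation $g(u)=-z$ therefore has a unique solution $u(z)\in(u_*,\infty)$, which in turn gives the unique negative solution $c(z) = -z - u(z)$.

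For the limit, as $z\to 0^-$ we have $-z\to 0^+$, and strict monotonicity of $g$ on $(u_*,\infty)$ combined with $g(u_*^+)=0$ forces $u(z)\to u_*$; hence $c(z) = -z - u(z) \to -u_*$. In the overparameterized regime $\phi\psi^{-1}>1$ this coincides with $c_0$ defined by \eqref{equation:a}, since $I(-c_0)=1$ identifies $-c_0 = u_*$; in the underparameterized regime the limit is $0$, which is the natural extension of \eqref{equation:a}. The main obstacle is locating the right reformulation: a direct monotonicity analysis of $c \mapsto c - \int \frac{(z+c)x}{-z-c+x\psi\phi^{-1}}\,dH$ fails because its derivative need not be signed, whereas after the $u$-substitution the positivity of $-uI'(u)$ together with the sign constraint $I(u)\le 1$ gives monotonicity on exactly the region where solutions live. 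A secondary subtlety is ruling out solutions in the "forbidden" interval $(0,u_*)$ in the overparameterized case, which is immediate because there $g<0$ while $-z>0$.
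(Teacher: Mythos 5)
Your proof is correct and takes a genuinely different route from the paper's. The paper argues existence via the intermediate value theorem for $f(c) = c - \int\frac{(z+c)x}{-z-c+x\psi\phi^{-1}}\,dH(x)$, noting $f(-\infty)=-\infty$ and $f(0)>0$; it argues uniqueness by dividing the fixed-point equation by $c$ and expanding the difference evaluated at two putative solutions into four integrals whose integrands are termwise positive by sign inspection; and it establishes $c(z)\to c_0$ in a third, separate step by showing that an auxiliary two-variable function changes sign across $a=c_0\pm\epsilon$ once $z$ is close enough to $0^-$. Your substitution $u=-z-c$ collapses all three into the study of the single scalar map $u\mapsto u(1-I(u))$: the identity $u(1-I(u))=-z>0$ forces $I(u)<1$; on $\{I\le 1\}=(u_*,\infty)$ the derivative $(1-I(u))-uI'(u)$ is strictly positive since both terms are nonnegative and $-uI'(u)>0$; and the map carries $(u_*,\infty)$ bijectively onto $(0,\infty)$ with limiting value $0$ at $u_*^+$, so existence, uniqueness, and $c(z)\to -u_*=c_0$ all fall out of the one monotonicity statement. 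Your approach is shorter and more structural, trading the paper's ad hoc algebraic expansion for the single observation that $I$ is strictly decreasing, and it also transparently covers the regime $\phi\psi^{-1}\le 1$, where $c_0$ as a negative root of \eqref{equation:a} does not exist and the limit degenerates to $-u_*=0$, a corner case the paper's argument passes over. The one step worth making fully explicit is that $u(1-I(u))=-z>0$ implies $I(u)<1$ and hence $u>-z$, so that $c=-z-u<0$ automatically; you use this silently when asserting the bijection between negative roots $c$ and roots $u>0$ of the rewritten equation.
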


\begin{proof}[Proof of Lemma \ref{lemma:appendix_equation}]
Given $z<0$, let $f(c(z)) = c(z) - \int \frac{(z+c(z))x}{-z-c(z)+x\psi\phi^{-1}} \, dH(x)$. We have $f(-\infty) = -\infty$ and  $f(0) = - \int \frac{zx}{-z+x\psi\phi^{-1}} \, dH(x)>0$ since $z<0$ and $x, \phi, \psi>0$. By the smoothness of $f$, we know $f$ has at least one negative solution. Suppose $c_1(z)$ and $c_2(z)$ are two negative solutions with $c_1(z)>c_2(z)$. Then, we have
\begin{align*}
    0 &=  \int \frac{x(\frac{z}{c_1(z)}+1)}{-z-c_1(z)+x\psi\phi^{-1}}\, dH(x) - \int \frac{x(\frac{z}{c_2(z)}+1)}{-z-c_2(z)+x\psi\phi^{-1}}\, dH(x)\\
    &=  \int \Bigg\{ \frac{x(c_1(z)-c_2(z))}{(-z-c_1(z)+x\psi\phi^{-1})(-z-c_2(z)+x\psi\phi^{-1})}  + \frac{z^2x(\frac{c_1(z)-c_2(z)}{c_1(z)c_2(z)})}{(-z-c_1(z)+x\psi\phi^{-1})(-z-c_2(z)+x\psi\phi^{-1})} \\
    &+ \frac{zx^2\psi\phi^{-1}(\frac{c_2(z)-c_1(z)}{c_1(z)c_2(z)})}{(-z-c_1(z)+x\psi\phi^{-1})(-z-c_2(z)+x\psi\phi^{-1})} + \frac{zx(\frac{(c_1(z)-c_2(z))(c_1(z)+c_2(z))}{c_1(z)c_2(z)})}{(-z-c_1(z)+x\psi\phi^{-1})(-z-c_2(z)+x\psi\phi^{-1})} \Bigg\} \,dH(x).
\end{align*}
Since $z, c_1(z), c_2(z) <0$, it is easy to find that each term above is larger than 0. This contradiction shows for given $z<0$, \eqref{equation:appendix_b} has a unique negative solution, denoted by $c(z)$.

Next, we show $\lim_{z\to 0^{-}}c(z) = c_0$. Given $z<0$, let 
\begin{align*}
    g(a, z) = 1 -  \int \frac{x}{-z-a+x\psi\phi^{-1}} \, dH(x) -  \int \frac{{zx}/{a}}{-z-a+x\psi\phi^{-1}}\, dH(x).
\end{align*}
Since $c(z)$ is the solution of \eqref{equation:appendix_b}, $g(c(z), z) = 0$. For any small $\epsilon > 0$, we can find a sufficiently small $\delta_1 >0$ such that, when $-\delta_1 < z < 0$, we have $0 < -z-c_0-\epsilon + x\psi\phi^{-1} < -c_0 + x\psi\phi^{-1}$. The second inequality is satisfied by taking $\delta_1 < \epsilon$. Because $x$ lies in the support of the measure $H$,  $x>C_0>0$. Moreover, since $c_0<0$, the first inequality holds when $\epsilon$ and $\delta_1$ are sufficiently small. Because $c_0$ is the root of $g$ when $z=0$,  in this case, we have  $g(c_0 + \epsilon, z) < 0$ when $z \in (-\delta_1,0)$. Furthermore,  we can find a sufficiently small $\delta_2>0$ such that $g(c_0 - \epsilon, z) > 0$ when $z \in (-\delta_2,0)$. This is because  $g(c_0,0)=0$, $1-  \int \frac{x}{-c_0+\epsilon+x\psi\phi^{-1}}\, dH(x) > 0$, and $\lim_{z\to 0^{-}} \int \frac{{zx}/{a}}{-z-a+x\psi\phi^{-1}}\, dH(x) = 0$.

To conclude, taking $\delta = \min \{\delta_1,\delta_2\}$, we have, when $z \in (-\delta,0)$, $g(c_0 + \epsilon, z) < 0$ and $g(c_0 - \epsilon, z) > 0$. By the smoothness of $g(a,z)$ with respect to $a$, we know $c(z) \in (c_0-\epsilon, c_0+\epsilon)$. Using the definition of a limit completes the proof. 

\end{proof}

Now we prove the bias part \eqref{equation:correlated_over_bias}.
\begin{proof}[Proof of the bias part  \eqref{equation:correlated_over_bias}]
For all large $n$, we have almost surely
\begin{align}\label{equation:appendix_biaslemma_1}
    &\rbr{X^\top  S^\top  S X}^+ X^\top  S^\top  S  X \notag\\  
    =~& \rbr{S X}^+S X \notag\\
    =~& \lim_{\delta\to 0^+}X^\top  S^\top  \rbr{SX X^\top  S^\top  + \delta I_m}^{-1} S X \notag\\
    =~& X^\top  S^\top  \rbr{SX X^\top  S^\top  }^{-1} S X,
\end{align}
where the first equality uses   $A^+ = \rbr{A^\top  A}^+ A^\top $ for any matrix $A$, the second inequality uses  $A^+ = \lim_{\delta \to 0^+}A^\top  \rbr{AA^\top  + \delta I}^{-1}$, and the third equality follows from Lemma \ref{lemma:appendix_support} and Assumption \ref{assumption_general}.  Specifically, when $SZ Z^\top  S^\top $ is invertible and $\Sigma$ satisfies  Assumption \ref{assumption_general}, then $SX X^\top  S^\top  = SZ \Sigma Z^\top  S^\top $ is also invertible. Let the  singular value
decomposition (SVD) of $S$ be $S = U D V$ where $U \in \RR^{m \times m}$, $V \in \RR^{m \times n}$ are both orthogonal matrices,  $D \in \RR^{m \times m}$ is a diagonal matrix. By Assumption \ref{assumption_sketchingMatrix}, we know almost surely for all large $n$, $D$ is invertible.  Then the RHS (right hand side) of  \eqref{equation:appendix_biaslemma_1} can be writen as 
\begin{align}\label{equation:appendix_biaslemma_2}
    X^\top  S^\top  \rbr{SX X^\top  S^\top  }^{-1} S X = X^\top  V^\top  \rbr{V X X^\top  V^\top  }^{-1} V X = \rbr{X^\top  V^\top  V X}^+ X^\top  V^\top  V  X.
\end{align}
Thus, by \eqref{equation:appendix_biaslemma_1}, \eqref{equation:appendix_biaslemma_2} and Lemma \ref{lemma_bias_variance}, we have
\begin{align}\label{equation:appendix_biaslemma_3}
B_{(S,X)}(\hat\beta^{S};\beta)
&= \frac{\alpha^2}{p}\trace\cbr{\sbr{I_p -\rbr{\frac{1}{p} X^\top  V^\top  V X}^+ \frac{1}{p} X^\top  V^\top  V  X }\Sigma}. 
\end{align}
For any $z<0$,
\begin{align*}
&\abr{\frac{1}{p}\trace\sbr{\rbr{\frac{1}{p} X^\top  V^\top  V X}^+ \frac{1}{p} X^\top  V^\top  V  X \Sigma}  - \frac{1}{p}\trace\sbr{\rbr{\frac{1}{p}X^\top  V^\top  V X - zI_p}^{-1} \frac{1}{p}X^\top  V^\top  V  X \Sigma}}\\
\leq~&  \frac{\abr{z} \nbr{\Sigma}_2}{\lambda_{\min}^+ \rbr{\frac{1}{p}X^\top  V^\top  V X}-z}\\
=~ & \frac{\abr{z} \nbr{\Sigma}_2}{\lambda_{\min}^+ \rbr{\frac{1}{p} \Sigma^{1/2}Z^\top  V^\top  V Z\Sigma^{1/2}}-z} \leq  \frac{\abr{z}C_1}{C_0\tau -z},
\end{align*}
where the last inequality follows from Lemmas \ref{lemma:appendix_support}, \ref{lemma:appendix_sigmalambdamin} and Assumption \ref{assumption_general}. Thus, taking limites on both sides of \eqref{equation:appendix_biaslemma_3} gives
\begin{align}\label{equation:appendix_biaslemma_4}
\lim_{n\to \infty}B_{(S,X)}(\hat\beta^{S};\beta) 
&= \alpha^2 \lim_{n\to \infty}\lim_{z \to 0^-} \frac{1}{p}\trace\cbr{\sbr{I_p - \rbr{\frac{1}{p}X^\top  V^\top  V X - z I_p}^{-1} \frac{1}{p}X^\top  V^\top  V  X }\Sigma} \notag\\
& = \alpha^2 \lim_{n\to \infty}\lim_{z \to 0^-} \frac{1}{p}\trace\cbr{\sbr{I_p - \rbr{\frac{1}{p}X^\top  V^\top  V X - z I_p}^{-1} \left(\frac{1}{p}X^\top  V^\top  V  X -z I_p + zI_p \right) }\Sigma} \notag\\
& = -\alpha^2 \lim_{n\to \infty}\lim_{z \to 0^-}z \frac{1}{p}\trace\sbr{\rbr{\frac{1}{p}X^\top  V^\top  V X - z I_p}^{-1}\Sigma }. 
\end{align}
Now we can follow a similar argument to the proof of Theorem 1 in \cite{hastie2019surprises} to show the validity of exchanging the limits  $n \to \infty$ and $z \to 0^-$. Define $f_n(z) = -\frac{z}{p}\trace\sbr{\rbr{\frac{1}{p}X^\top  V^\top  V X - z I_p}^{-1} \Sigma}$. Since $|f_n(z)| \leq |z|\nbr{(\frac{1}{p}X^\top  V^\top  V X - z I_p)^{-1}}_2\nbr{\Sigma}_2 \leq C_1$, we know $f_n(z)$ is uniformly bounded. Besides,
\begin{align}
|f_n'(z)| &\leq \frac{1}{p}\abr{\trace\sbr{\rbr{\frac{1}{p}X^\top  V^\top  V X - z I_p}^{-1}\Sigma } + z~\trace\sbr{\rbr{\frac{1}{p}X^\top  V^\top  V X - z I_p}^{-2} \Sigma}} \notag\\
&\leq  \frac{\lambda^+_{\min}\rbr{\frac{1}{p}X^\top  V^\top  V X} \nbr{\Sigma}_2}{\sbr{\lambda^+_{\min}\rbr{\frac{1}{p}X^\top  V^\top  V X}-z}^2} \notag\\
&\leq \frac{\nbr{\Sigma}_2 }{\lambda^+_{\min}\rbr{\frac{1}{p}X^\top  V^\top  V X}} \leq \frac{ C_1}{C_0 \tau},  \label{ineq_fprime}
\end{align}
where the last inequality holds almost surely for all large $n$. As its derivatives are bounded, the sequence $\{f_n\}_{n=1}^\infty$ is equicontinuous, and hence, by the Arzela-Ascoli theorem, $f_n$ converges uniformly. Thus, we can use Moore-Osgood theorem to conclude the validity of exchanging the limits $n \to \infty$ and $\lambda \to 0^-$. Define
\begin{align}\label{definition:appendix_m}
    m_{1n}(z) = \frac{1}{p}\trace \sbr{\rbr{\frac{1}{p}\Sigma^{1/2}Z^\top  V^\top  V Z \Sigma^{1/2} - z I_p}^{-1}\Sigma}, \quad m_{2n}(z) = \frac{1}{p}\trace\sbr{\rbr{\frac{1}{p} V Z \Sigma Z^\top  V^\top  - z I_m}^{-1} }.
\end{align}
According to \cite{zhang2007spectral}, almost surely, as $n\to \infty$, $m_{1n}(z) \to m_1(z)$, and $m_{2n}(z) \to m_2(z)$ where for given $z<0$,  $\rbr{m_1(z), m_2(z)} \in \RR^+ \times \RR^+$ is the unique solution of the self-consistent equations
\begin{align}\label{equations:appendix_selfconsistent}
    m_1(z) &= \int \frac{x}{-z\sbr{1+xm_2(z)}} \, dH(x), \notag\\
    m_2(z) &= \psi \phi^{-1} \frac{1}{-z\sbr{1+m_1(z)}}. 
\end{align}
Substituting $m_2$ into $m_1$ in \eqref{equations:appendix_selfconsistent} and mutiplying both sides by $z$, we obtain
\begin{align*}
    zm_1(z) = \int \frac{\rbr{z + zm_1(z)}x}{-z - z m_1(z) +x\psi\phi^{-1}} \, dH(x).
\end{align*}
By lemma \ref{lemma:appendix_equation}, we know $\lim_{z\to 0^-} zm_1(z) = c_0$. Exchanging the limits in \eqref{equation:appendix_biaslemma_4},  we have almost surely
\begin{align*}
    \lim_{n\to \infty}B_{(S,X)}(\hat\beta^{S};\beta) &= -\alpha^2 \lim_{z \to 0^-}\lim_{n\to \infty} z \frac{1}{p}\trace\sbr{\rbr{\frac{1}{p}X^\top  V^\top  V X - z I_p}^{-1}\Sigma }\\
    &= -\alpha^2\lim_{z\to 0^-} zm_1(z) = -\alpha^2 c_0.
\end{align*}
Lemma \ref{lemma_bias_variance} assures that  $B_{(\beta, S,X)}(\hat\beta^{S};\beta)$ converges almost surely to the same limit.
\end{proof}

\paragraph{Variance part}
To prove the variance part \eqref{equation:correlated_over_variance}, we need the following theorem, often known as the Vitali’s theorem \citep[Lemma 2.14]{bai2010spectral}. This theorem ensures the convergence of the derivatives of converging analytic functions.
\begin{lemma}[Vitali’s convergence theorem]\label{lemma:appendix_vitali}
    Let $f_1, f_2, \cdots$ be analytic on the domain $D$, satisfying $\abr{f_n(z)} \leq M$ for every $n$ and $z \in D$. Suppose that there is an analytic function $f$ on $D$ such that $f_n(z) \to f(z)$ for all $z \in D$. Then it also holds that $f_n'(z) \to f'(z)$ for all $z\in D$. 
\end{lemma}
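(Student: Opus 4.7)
The plan is to prove Vitali's convergence theorem in three steps: (1) derive equicontinuity of $\{f_n\}$ on compact subsets from the uniform bound $M$ via Cauchy estimates, (2) upgrade pointwise convergence to locally uniform convergence through a normal-families argument, and (3) pass to derivatives via Cauchy's integral formula. First I would fix a compact set $K \subset D$ and choose $\delta > 0$ so that the closed $\delta$-neighborhood $K_\delta = \{z : \mathrm{dist}(z, K) \le \delta\}$ still lies in $D$. For any $z \in K_\delta$, applying Cauchy's integral formula on the circle $|\zeta - z| = \delta/2$ yields $|f_n'(z)| \le 2M/\delta$, uniformly in $n$. This uniform derivative bound on $K_\delta$ gives a uniform Lipschitz bound on $K$, hence equicontinuity of the family $\{f_n\}$ on $K$.

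Next, combining equicontinuity with the uniform bound $\sup_n \|f_n\|_{L^\infty(D)} \le M$, the Arzela--Ascoli theorem (equivalently Montel's theorem) implies that every subsequence of $\{f_n\}$ admits a further subsequence converging uniformly on $K$. By pointwise convergence to $f$, each such sub-subsequential limit must coincide with $f$. A standard double-subsequence argument then yields $f_n \to f$ uniformly on $K$, and since $K$ was arbitrary, $f_n \to f$ locally uniformly on $D$. Finally, fix $z_0 \in D$ and pick $r > 0$ so that $\overline{B(z_0, r)} \subset D$. Cauchy's integral formula for the derivative gives
\[
f_n'(z_0) - f'(z_0) = \frac{1}{2\pi i}\oint_{|\zeta - z_0| = r} \frac{f_n(\zeta) - f(\zeta)}{(\zeta - z_0)^2}\, d\zeta,
\]
whose magnitude is at most $r^{-1}\sup_{|\zeta - z_0| = r}|f_n(\zeta) - f(\zeta)|$. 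By the locally uniform convergence on the compact circle, the right-hand side tends to $0$, so $f_n'(z_0) \to f'(z_0)$ for every $z_0 \in D$.

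The main obstacle is Step 2: upgrading pointwise convergence to locally uniform convergence. Without the a priori uniform bound and the analyticity that yields equicontinuity, pointwise convergence alone would be far too weak to force convergence of derivatives (this already fails on the real line for smooth functions). The key insight is that analytic functions on a domain form a rigid class: bounded plus pointwise convergent is already enough to force uniform convergence on compacta, and once uniform convergence is secured on an enclosing circle, the Cauchy kernel $(\zeta - z_0)^{-2}$ automatically transfers it to convergence of derivatives.
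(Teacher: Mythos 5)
The paper does not prove this lemma; it states it as a citation to \citep[Lemma 2.14]{bai2010spectral}, so there is no in-paper proof to compare against. Your argument is a correct, self-contained proof along the standard normal-families route: a Cauchy estimate gives a uniform derivative bound and hence local equicontinuity, Montel/Arzel\`a--Ascoli together with the pointwise limit forces locally uniform convergence on compacta, and the Cauchy integral formula for the first derivative then transfers locally uniform convergence of $f_n$ to convergence of $f_n'$. The one place to tighten is the geometry in Step~1: if you want the Cauchy estimate at \emph{every} $z \in K_\delta$ using the circle $\abr{\zeta - z} = \delta/2$, you need that circle to stay inside $D$, which requires something like $K_{3\delta/2} \subset D$ rather than just $K_\delta \subset D$; similarly, passing from a derivative bound on $K_\delta$ to a Lipschitz bound on all of $K$ implicitly uses convexity of the connecting segments. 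Both are routine to patch (shrink $\delta$, or argue equicontinuity disc by disc and then patch with compactness) and do not affect the substance. Note also that the lemma's hypotheses are stronger than classical Vitali (pointwise convergence on all of $D$ to an a priori analytic $f$), so you are not obliged to establish analyticity of the limit, which you correctly treated as given.
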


\begin{proof}[Proof of the variance part \eqref{equation:correlated_over_variance}]

Let the  singular value
decomposition (SVD) of $S$ be $S = U D V$ where $U \in \RR^{m \times m}$, $V \in \RR^{m \times n}$ are both orthogonal matrices, $D \in \RR^{m \times m}$ is a diagonal matrix. According to Lemma \ref{lemma_bias_variance},
\begin{align}\label{equation:appendix_correlatedvar_1}
V_{(S,X)}\left(\hat\beta^{S}; \beta\right)
& = \sigma^2{\rm tr}\sbr{ \rbr{X^\top  S^\top  SX}^+ X^\top  S^\top  S S^\top  SX \rbr{X^\top  S^\top  SX}^+ \Sigma}\notag\\
& = \sigma^2{\rm tr}\sbr{ \rbr{SX}^+ S S^\top   \rbr{X^\top  S^\top }^+ \Sigma} \notag\\ 
& = \sigma^2{\rm tr}\sbr{\lim_{\delta\rightarrow 0+}X^\top  S^\top  \left(S X X^\top  S^\top +\delta I_m \right)^{-1} SS^\top  \left(S X X^\top  S^\top +\delta I_m \right)^{-1} SX \Sigma }\notag\\
& = \sigma^2{\rm tr}\sbr{X^\top  S^\top  \left(S X X^\top  S^\top  \right)^{-1} SS^\top  \left(S X X^\top  S^\top  \right)^{-1} SX \Sigma }\notag\\
& = \sigma^2{\rm tr}\sbr{X^\top  V^\top  \left(V X X^\top  V^\top  \right)^{-2}   VX \Sigma }\notag\\
& = \sigma^2{\rm tr}\sbr{ \rbr{VX}^+ \rbr{X^\top  V^\top }^+ \Sigma }\notag\\
& = \sigma^2{\rm tr}\sbr{  \rbr{X^\top  V^\top  V X}^+ \Sigma }\notag\\
& = \frac{\sigma^2}{p}{\rm tr}\sbr{  \rbr{\frac{1}{p}X^\top  V^\top  V X}^+ \Sigma },
\end{align}
where similar to the proof of the bias part in Theorem \ref{thm:isotropic_limiting_risk}, we use the identity $A^+ = \rbr{A^\top  A }^+A^\top  = \lim_{\delta\to 0^+}A^\top  \rbr{AA^\top  + \delta I}^{-1}$ for any matrix $A$, and the fact that almost surely for all large $n$, $S X X^\top  S^\top $ is invertible. Define 
\begin{align*}
    g_n(z) = \frac{1}{p}  {\rm tr}\sbr{ \rbr{\frac{1}{p}X^\top  V^\top  V X} \rbr{\frac{1}{p}X^\top  V^\top  V X - zI_p}^{-2} \Sigma }.
\end{align*}
Since for any $z \leq 0, x > 0$, we have 
\$
\abr{\frac{x}{(x-z)^2} - \frac{1}{x}} \leq \frac{2\abr{z}}{x^2}. 
\$
Thus, by Lemma \ref{lemma:appendix_sigmalambdamin} and Assumption \ref{assumption_general}, for any $z<0$,
\begin{align}\label{inequality:appendix_varapprox}
    \abr{\frac{V_{(S,X)}\left(\hat\beta^{S}; \beta\right)}{\sigma^2} - g_n(z)} \leq  \frac{2\abr{z}}{\sbr{\lambda_{\min}^+\rbr{\frac{1}{p}X^\top  V^\top  V X}}^2} \nbr{\Sigma}_2 \leq \frac{2\abr{z}C_1}{\rbr{C_0\tau}^2}.
\end{align}
By \eqref{inequality:appendix_varapprox}, we can continue \eqref{equation:appendix_correlatedvar_1},
\begin{align}\label{equation:appendix_correlatedvar_2}
    &\lim_{n\to \infty} V_{(S,X)}\left(\hat\beta^{S}; \beta\right)\notag\\
    =~& \sigma^2 \lim_{n\to \infty} \lim_{z \to 0^-}\frac{1}{p} {\rm tr}\sbr{ \rbr{\frac{1}{p}X^\top  V^\top  V X} \rbr{\frac{1}{p}X^\top  V^\top  V X - zI_p}^{-2} \Sigma }\notag\\
    =~& \sigma^2 \lim_{n\to \infty} \lim_{z \to 0^-}\frac{1}{p} {\rm tr}\sbr{ \rbr{\frac{1}{p}X^\top  V^\top  V X - zI_p + zI_p} \rbr{\frac{1}{p}X^\top  V^\top  V X - zI_p}^{-2} \Sigma }\notag\\
    =~& \sigma^2 \lim_{n\to \infty} \lim_{z \to 0^-} \frac{1}{p} {\rm tr}\sbr{  \rbr{\frac{1}{p}X^\top  V^\top  V X - zI_p}^{-1} \Sigma } + \frac{1}{p} {\rm tr}\sbr{ z \rbr{\frac{1}{p}X^\top  V^\top  V X - zI_p}^{-2} \Sigma }.
\end{align}
We now verify the validity of exchanging the limits $n \to \infty$ and $z \to 0^-$. As in the proof of the bias part of Theorem \ref{thm:isotropic_limiting_risk}, in order to use Arzela-Ascoli theorem and Moore-Osgood theorem, it suffices to show $g_n(z)$ and $g_n'(z)$ are both uniformly bounded. We know it holds almost surely for all large $n$ that for any $z<0$,
\begin{align*}
    \abr{g_n(z)} \leq \frac{\nbr{\frac{1}{p}X^\top  V^\top  V X}_2\nbr{\Sigma}_2}{\sbr{\lambda_{\min}^+\rbr{\frac{1}{p}X^\top  V^\top  V X}-z}^2}\leq \frac{\nbr{\frac{1}{p}Z Z^\top  }_2 \nbr{V^\top  V}_2 \nbr{\Sigma}_2^2 }{\sbr{\lambda^+_{\min}\rbr{\frac{1}{p}X^\top  V^\top  V X}-z}^2} \leq \frac{\rbr{1+\sqrt{\phi^{-1}}}^2 C_1^2}{(C_0 \tau)^2}.
\end{align*}
Moreover, 
\begin{align*}
    \abr{g_n'(z)}  &= \abr{\frac{2}{p} {\rm tr}\sbr{  \rbr{\frac{1}{p}X^\top  V^\top  V X - zI_p}^{-2} \Sigma } + \frac{2}{p} {\rm tr}\sbr{ z \rbr{\frac{1}{p}X^\top  V^\top  V X - zI_p}^{-3} \Sigma }}\\
    &\leq \frac{2\nbr{\frac{1}{p}X^\top  V^\top  V X}_2\nbr{\Sigma}_2}{\sbr{\lambda_{\min}^+\rbr{\frac{1}{p}X^\top  V^\top  V X}-z}^3} \leq \frac{2\rbr{1+\sqrt{\phi^{-1}}}^2 C_1^2}{(C_0 \tau)^3}.
\end{align*}
Thus, $g_n(z)$ and $g_n'(z)$ are both uniformly bounded and hence, we can exchange the limits. Recall the definition of $m_{1n}(z)$ in \eqref{definition:appendix_m}, we know $g_n(z) = m_{1n}(z) + zm_{1n}'(z) = (zm_{1n}(z))'$. We will use Lemma \ref{lemma:appendix_vitali} to show $g_n(z) \to m_1(z) + zm_{1}'(z)$ almost surely as $n \to \infty$. Since $zm_{1n}(z)$ and $zm_{1}(z)$ are analytic on $(-\infty, 0)$ such that  $zm_{1n}(z)\to zm_{1}(z)$; see \cite{zhang2007spectral}.  In addition, as in the proof of the bias part of Theorem \ref{thm:isotropic_limiting_risk}, almost surely for all large 
$n$, it holds that $\abr{zm_{1n}(z)} \leq C_1$. Thus the conditions of Lemma \ref{lemma:appendix_vitali} are satisfied. By exchanging the limits in \eqref{equation:appendix_correlatedvar_2} and using Lemma \ref{lemma:appendix_vitali}, we obtain
\begin{align}\label{equation:appendix_correlatedvar_3}
     \lim_{n\to \infty} V_{(S,X)}\left(\hat\beta^{S}; \beta\right) &= \sigma^2\lim_{z \to 0^-} \lim_{n\to \infty} g_n(z)
     = \sigma^2 \lim_{z\to 0^-}m_1(z) + zm_1'(z).
\end{align}
Recall the self-consistent equations in \eqref{equations:appendix_selfconsistent}. A direct calculation yields
\begin{align*}
    m_1(z) + zm_1'(z) = \int \frac{\rbr{1+m_1(z)+zm_1'(z)}x^2\psi\phi^{-1}}{(z + zm_1(z)-x\psi\phi^{-1})^2} \, dH(x).
\end{align*}
Taking $z\to 0^-$ in the above equality and using $\lim_{z\to 0^-}zm_1(z) = c_0$ in Lemma \ref{lemma:appendix_equation}, we  derive 
\begin{align*}
    \lim_{z\to 0^-}m_1(z) + zm_1'(z) = \frac{\int \frac{x^2\psi\phi^{-1}}{\rbr{c_0 - x\psi\phi^{-1}}^2}\, dH(x)}{1 -\int \frac{x^2\psi\phi^{-1}}{\rbr{c_0 - x\psi\phi^{-1}}^2}\, dH(x)}.
\end{align*}
Combining the above limit with \eqref{equation:appendix_correlatedvar_3}, we have almost surely
\begin{align*}
     \lim_{n\to \infty} V_{(S,X)}\left(\hat\beta^{S}; \beta\right) = \sigma^2\frac{\int \frac{x^2\psi\phi^{-1}}{\rbr{c_0 - x\psi\phi^{-1}}^2}\, dH(x)}{1 -\int \frac{x^2\psi\phi^{-1}}{\rbr{c_0 - x\psi\phi^{-1}}^2}\, dH(x)}.
\end{align*}
 Lemma \ref{lemma_bias_variance} assures that $V_{(\beta, S,X)}\left(\hat\beta^{S}; \beta\right)$ converges almost surely to the same limit.
\end{proof}

\subsection{Proofs for the under-parameterized case}\label{appendix:proof_under}

\subsubsection{Proof of Theorem \ref{correlated_under_biasvariance}}
According to  Lemma \ref{lemma:appendix_support} and Assumption \ref{assumption_general}, we know almost surely for all large $n$, $\frac{1}{p}X^\top  S^\top  SX$ is invertible. Thus by Lemma \ref{lemma_bias_variance}, it holds that almost surely for all large $n$, $B_{(S,X)}(\hat\beta^{S};\beta) = B_{(\beta, S,X)}(\hat\beta^{S};\beta) = 0$ and hence \eqref{limit:correlated_under_bias} holds. To show the limiting variance \eqref{equation:correlated_under_variance}, we follow from a similar proof to that for  the bias part in Theorem \ref{thm:isotropic_limiting_risk}. To be concise, we only sketch the proof here.  Similar to \eqref{equation:appendix_correlatedvar_1} and  \eqref{equation:appendix_correlatedvar_2}, we have
\begin{align}\label{equation:appendix_under_correlatedvar_1}
&V_{(S,X)}\left(\hat\beta^{S}; \beta\right)\notag\\
 =~& \sigma^2{\rm tr}\sbr{ \rbr{X^\top  S^\top  SX}^{-1} X^\top  S^\top  S S^\top  SX \rbr{X^\top  S^\top  SX}^{-1} \Sigma}\notag\\
 =~& \sigma^2{\rm tr}\sbr{ \rbr{Z^\top  S^\top  SZ}^{-1} Z^\top  S^\top  S S^\top  SZ \rbr{Z^\top  S^\top  SZ}^{-1} }\notag\\
 =~& \sigma^2{\rm tr}\sbr{ \rbr{Z^\top  S^\top }^+ \rbr{SZ}^+S S^\top    } \notag\\ 
 =~& \sigma^2{\rm tr}\sbr{  \rbr{SZZ^\top  S^\top  }^+ SS^\top  }\notag\\
 =~& \frac{\sigma^2}{n}{\rm tr}\sbr{  \rbr{\frac{1}{n}SZZ^\top  S^\top  }^+ SS^\top  }\notag\\
=~& \sigma^2  \lim_{z \to 0^-}\frac{1}{n} {\rm tr}\sbr{ \rbr{\frac{1}{n} SZ Z^\top  S^\top } \rbr{\frac{1}{n}SZ Z^\top  S^\top  - zI_m}^{-2} SS^\top  }\notag\\
=~& \sigma^2  \lim_{z \to 0^-} \frac{1}{n} {\rm tr}\sbr{  \rbr{\frac{1}{n}SZ Z^\top  S^\top  - zI_m}^{-1} SS^\top  } + \frac{1}{n} {\rm tr}\sbr{ z \rbr{\frac{1}{n}SZ Z^\top  S^\top  - zI_m}^{-2} SS^\top }.
\end{align}
Define
\begin{align}\label{definition:appendix_m_tilde}
    \tilde{m}_{1n}(z) = \frac{1}{n}\trace \sbr{\rbr{\frac{1}{n}SZ Z^\top  S^\top  - zI_m}^{-1}SS^\top }, \, \tilde{m}_{2n}(z) = \frac{1}{n}\trace\sbr{\rbr{\frac{1}{n}  Z^\top  S^\top  SZ - zI_p}^{-1} }.
\end{align}
Then $\tilde{m}_{1n}(z) \to \tilde{m}_1(z)$ and $\tilde{m}_{2n}(z) \to \tilde{m}_2(z)$ almost surely as $n\rightarrow \infty$, where  $\rbr{\tilde{m}_1(z), \tilde{m}_2(z)} \in \RR^+ \times \RR^+$ is the unique solution of the self-consistent equations \citep{zhang2007spectral}  
\begin{align}\label{equations:appendix_under_selfconsistent}
    \tilde{m}_1(z) &= \psi \int \frac{x}{-z\sbr{1+x\tilde{m}_2(z)}} \, dB(x), \notag\\
    \tilde{m}_2(z) &= \phi  \frac{1}{-z\sbr{1+\tilde{m}_1(z)}},
\end{align}
for any $z<0$. 
Substituting $\tilde{m}_2$ into $\tilde{m}_1$ in \eqref{equations:appendix_under_selfconsistent} and multiplying both sides by $z$, we obtain
\begin{align*}
    z\tilde{m}_1(z) = \int \psi \frac{\rbr{z + z\tilde{m}_1(z)}x}{-z - z \tilde{m}_1(z) +x\phi} \, dH(x).
\end{align*}
Following the similar proofs to Lemma \ref{lemma:appendix_equation} and the bias part in Theorem \ref{thm:isotropic_limiting_risk}, we can obtain $\lim_{z\to 0^-}z\tilde{m}_1(z) = \tilde{c}_0$ where $\tilde{c}_0$ is defined in \eqref{equation:correlated_under}. Following the same argument for verifying interchange of the limits and Lemma \ref{lemma:appendix_vitali}, we have 
\begin{align*}
        &\lim_{n\to \infty}V_{(S,X)}(\hat\beta^{S};\beta) = \lim_{n \to \infty}V_{(\beta, S,X)}(\hat\beta^{S};\beta)\\
        =~& \sigma^2\lim_{z\to 0^-}\tilde{m}_1(z) + z\tilde{m}'_1(z) = \sigma^2\frac{\psi\int \frac{x^2\phi}{\rbr{\tilde{c}_0-x\phi}^2} \, dB(x)}{1-\psi\int \frac{x^2\phi}{\rbr{\tilde{c}_0-x\phi}^2} \, dB(x)}.
\end{align*}

\subsubsection{Proof of Corollary \ref{Corollary:under_special_sketching}}\label{subsubsec:proof4_4}
When $S$ is an orthogonal sketching matrix, i.e., $SS^\top  = I_m$, we have $B(x) = \delta_{\{1\}}(x)$ where $\delta$ is the Dirac function. A simple calculation shows $\tilde{c}_0 = \phi - \psi$, and hence
\begin{align*}
        V_{(S,X)}(\hat\beta^{S};\beta) = V_{(\beta, S,X)}(\hat\beta^{S};\beta)
         \to \sigma^2\frac{\psi\int \frac{x^2\phi}{\rbr{\tilde{c}_0-x\phi}^2} \, dB(x)}{1-\psi\int \frac{x^2\phi}{\rbr{\tilde{c}_0-x\phi}^2} \, dB(x)} = \sigma^2\frac{\phi\psi^{-1}}{1-\phi\psi^{-1}}.
\end{align*}
When $S$ is an \text{i.i.d.} sketching matrix, we know that almost surely, the ESD of $SS^\top $ converges to the M-P law with parameter $\psi$, whose CDF (cumulative distribution function) is denoted by $F_{\psi}$, i.e., $B = F_{\psi}$. The self-consistent equation \eqref{equation:correlated_under} reduces to 
\$
1 = \frac{\psi}{\phi} + \frac{\psi \tilde{c}_0}{\phi^2}s_{\psi}\rbr{\frac{\tilde{c}_0}{\phi}},
\$ 
where $s_{\psi}$ is the Stieltjes transform of M-P law with parameter $\psi$. According to the seminal work \citep{marvcenko1967distribution}, we know for any $z<0$,
\begin{align*}
    s_\psi(z) = \frac{1-\psi-z-\sqrt{(z-1-\psi)^2-4\psi}}{c\psi z}.
\end{align*}
A direct calculation shows $\tilde{c}_0 = -\psi -\phi^2 + \phi + \psi \phi$. Furthermore, 
\begin{align*}
    \psi\int \frac{x^2\phi}{\rbr{\tilde{c}_0-x\phi}^2} \, dB(x) &= \psi \phi^{-1} \int \frac{x^2}{\rbr{x-\tilde{c}_0 \phi^{-1}}^2}\, dF_{\psi}(x)\\
    &= \psi\phi^{-1}\sbr{1+2\tilde{c}_0 \phi^{-1}s_{\psi}\rbr{\tilde{c}_0 \phi^{-1}} + \rbr{\tilde{c}_0 \phi^{-1}}^2s_{\psi}'\rbr{\tilde{c}_0 \phi^{-1}}}\\
    &= \frac{\phi - 2\phi^2\psi^{-1}+\phi\psi^{-1}}{1 - \phi^2\psi^{-1}}.
\end{align*}
Plugging the above equality into \eqref{equation:correlated_under_variance}, we  get
\begin{align*}
        V_{(S,X)}(\hat\beta^{S};\beta) = V_{(\beta, S,X)}(\hat\beta^{S};\beta) \to \sigma^2\rbr{\frac{\phi}{1-\phi}+\frac{\phi\psi^{-1}}{1-\phi\psi^{-1}} }.
\end{align*}

\subsubsection{Proof of Corollary \ref{coro:optimalsketchingmatrix}}

According to \eqref{equation:correlated_under}, we have
\begin{align*}
    1 = \psi \phi^{-1} + \psi \tilde{c}_0 \phi^{-2} \int \frac{1}{x - \tilde{c}_0 \phi^{-1}}\, dB(x) = \psi \phi^{-1}\rbr{1 + ts_B(t)},
\end{align*}
where $t = \tilde{c}_0 \phi^{-1}$ and $s_B(z) = \int \frac{1}{x-z}\, dB(x)$ is the Stieltjes transform of the measure $B$. Thus, we have $ts_B(t) = \psi^{-1}\phi -1$. In order to minimize \eqref{equation:correlated_under_variance}, it suffices to minimize the numerator $\psi\phi^{-1}\int \frac{x^2}{(t-x)^2} \, dB(x)$, which after simplification is $\psi\phi^{-1}\sbr{1+2ts_B(t)+t^2s_B'(t)}$. Therefore it suffices to minimize $t^2s_B'(t)$. By the Cauchy-Schwartz inequality, we have 
\begin{align*}
   t^2s_B'(t) = \int \frac{t^2}{(x-t)^2}\, dB(x) \geq  \rbr{\int \frac{t}{x-t}\, dB(x)}^2 = \rbr{\psi^{-1}\phi-1}^2,
\end{align*}
and  the minimum is achieved at $B = \delta_{\{a\}} (a>0)$.

\section{Proof of Theorem \ref{thm:correlated_deterministic}}\label{appendix:proof_nonrandom}
The proof to the variance part is the same as those for  Theorem \ref{thm:correlated_over} and Theorem \ref{correlated_under_biasvariance}. As for the bias part, when $p/m \to \phi\psi^{-1}<1$, same as \eqref{limit:correlated_under_bias}, it is easy to show almost surely for all large $n$, $B_{(\beta, S,X)}(\hat\beta^{S};\beta) = 0$. Hence, we only need to prove the bias part \eqref{limit:deterministic_bias2} for $p/m \to \phi\psi^{-1}>1$. Without loss of generality, we assume $\nbr{\beta} = 1$ throughout the proof. Let the SVD of $S$ be $S = U D V$ where $U \in \RR^{m \times m}$, $V \in \RR^{m \times n}$ are both orthogonal matrices, and  $D \in \RR^{m \times m}$ is a diagonal matrix. According to \eqref{equation:appendix_biaslemma_1} and \eqref{equation:appendix_biaslemma_2}, we have
\begin{align*}
    B_{(\beta, S,X)}(\hat\beta^{S};\beta) = \nbr{\Sigma^{1/2}\sbr{\rbr{X^\top  V^\top  VX}^{+}X^\top  V^\top  VX - I_p}\beta}^2.
\end{align*}
Let 
\begin{align*}
    h_n(z) = \nbr{\Sigma^{1/2}\sbr{\rbr{\frac{1}{p}X^\top  V^\top  VX - zI_p}^{-1} \frac{1}{p}X^\top  V^\top  VX - I_p}\beta}^2. 
\end{align*}
Then, for any $z<0$,
\begin{align}\label{ineq:appendix_deterministic_ridgeapprox}
    &\abr{B_{(\beta, S,X)}(\hat\beta^{S};\beta)^{1/2}- h_n(z)^{1/2}}\notag\\
    \leq~& \nbr{\Sigma^{1/2}\sbr{\rbr{\frac{1}{p}X^\top  V^\top  VX }^{+} \frac{1}{p}X^\top  V^\top  VX-\rbr{\frac{1}{p}X^\top  V^\top  VX-zI_p }^{-1} \frac{1}{p}X^\top  V^\top  VX}\beta}  \notag\\
    \leq~& \nbr{\Sigma^{1/2}}_2\nbr{\beta} \frac{\abr{z}}{\lambda_{\min}^{+}\rbr{\frac{1}{p}X^\top  V^\top  VX}-z} \notag\\
    \leq~& \frac{\sqrt{C_1}\abr{z}}{\lambda_{\min}^{+}\rbr{\frac{1}{p}X^\top  V^\top  VX}-z} \leq \frac{\sqrt{C_1}\abr{z}}{C_0\tau},
\end{align}
where the last inequality uses Lemma \ref{lemma:appendix_support} and \ref{lemma:appendix_sigmalambdamin}. By the fact that $\abr{B_{(\beta, S,X)}(\hat\beta^{S};\beta)}\leq C_1$ and  \eqref{ineq:appendix_deterministic_ridgeapprox}, we conclude 
\begin{align*}
    B_{(\beta, S,X)}(\hat\beta^{S};\beta) = \lim_{z\to 0^-}h_n(z) = \lim_{z\to 0^-}z^2\beta^\top  \rbr{\frac{1}{p}X^\top  V^\top  VX - zI_p}^{-1}\Sigma \rbr{\frac{1}{p}X^\top  V^\top  VX - zI_p}^{-1}\beta.
\end{align*}
Next, we follow the same idea as in the proof to the bias part in Theorem \ref{thm:isotropic_limiting_risk} to verify the interchange of the limits $n\to \infty$ and $z\to 0^-$. Since for any $z < 0$, $\abr{h_n(z)} \leq C_1$ and 
\begin{align*}
    &\abr{h_n'(z)}\\
    \leq& 2\nbr{\Sigma}_2\nbr{\beta}^2\nbr{z\rbr{\frac{1}{p}X^\top  V^\top  VX - zI_p}^{-1}}_2\nbr{\rbr{\frac{1}{p}X^\top  V^\top  VX - zI_p}^{-1} + z\rbr{\frac{1}{p}X^\top  V^\top  VX - zI_p}^{-2}}_2\\
    \leq& 2\frac{\lambda^+_{\min}\rbr{\frac{1}{p}X^\top  V^\top  V X} \nbr{\Sigma}_2}{\sbr{\lambda^+_{\min}\rbr{\frac{1}{p}X^\top  V^\top  V X}-z}^2} \leq \frac{2C_1}{C_0\tau},
\end{align*}
where the second and the last inequalities follow \eqref{ineq_fprime} and hold almost surely for all large $n$, we can exchange limits by Arzela-Ascoli theorem and Moore-Osgood theorem, that is, 
\begin{align}\label{limit:appendix_exchange_deterministic}
    \lim_{n\to \infty}B_{(\beta, S,X)}(\hat\beta^{S};\beta) = \lim_{n \to \infty}\lim_{z\to 0^-}h_n(z) = \lim_{z\to 0^-} \lim_{n\to \infty}h_n(z).
\end{align}
Next, we aim to find $\lim_{z\to 0^-} \lim_{n\to \infty}h_n(z)$. Let $\mathbb{D} = \{(z,w)\in \RR^2: z < 0, w > -\frac{1}{2C_1}\}$, 
\begin{align*}
    \mathcal{H}_n(z, w) = z\beta^\top  \rbr{\frac{1}{p}X^\top  V^\top  V X - zI_p - zw\Sigma}^{-1}\beta,
\end{align*}
which is defined on $\mathbb{D}$, and 
\begin{align*}
    \Sigma_w = \Sigma\rbr{I_p + w\Sigma}^{-1},\quad \beta_w = \rbr{I_p + w\Sigma}^{-1/2}\beta.
\end{align*}
Then, $\mathcal{H}_n(z,w)$ is analytic on $\mathbb{D}$ such that  
\begin{align*}
    \mathcal{H}_n(z,w) &= z\beta_w^\top  \rbr{\frac{1}{p}\Sigma_w^{1/2}Z^\top  V^\top  VZ \Sigma_w^{1/2} - zI_p}^{-1}\beta_w,\quad 
    h_n(z) = \frac{\partial \mathcal{H}_n}{\partial w}(z,0).
\end{align*}
Further write
\begin{align*}
    m_{1n}(z,w) = \frac{1}{p}\trace \sbr{\rbr{\frac{1}{p}\Sigma_w^{1/2}Z^\top  V^\top  V Z \Sigma_w^{1/2} - z I_p}^{-1}\Sigma_w}, ~ m_{2n}(z,w) = \frac{1}{p}\trace\sbr{\rbr{\frac{1}{p} V Z \Sigma_w Z^\top  V^\top  - z I_m}^{-1} }.
\end{align*}
According to \citep{paul2009no} or  \citep[Theorem 2.7]{couillet2021random}, $-\frac{1}{z}\rbr{I_p + m_{2n}(z,w)\Sigma_w}^{-1}$ is the  deterministic equivalent of $\rbr{\frac{1}{p}\Sigma_w^{1/2}Z^\top  V^\top  VZ \Sigma_w^{1/2} - zI_p}^{-1}$. Thus, it holds  that, for any given $(z,w)\in \mathbb{D}$, as $n \to \infty$,
\begin{align*}
    \mathcal{H}_n(z,w) - \tilde{\mathcal{H}}_n(z,w)  \to 0~~~\text{almost surely}, 
\end{align*}
where $\tilde{\mathcal{H}}_n(z,w)$ is defined as
\begin{align*}
    \tilde{\mathcal{H}}_n(z,w) = -\beta_w^\top  \rbr{I_p + m_{2n}(z,w)\Sigma_w}^{-1}\beta_w.
\end{align*}
Furthermore, it is easy to show that $\mathcal{H}_n(z,w)$ and $\tilde{\mathcal{H}}_n(z,w)$ are both uniformly bounded on $\mathbb{D}$. Thus, using the Vitali's theorem colleted as Lemma \ref{lemma:appendix_vitali}, we have for $z<0$ 
\begin{align}\label{limit:appendix_deterministic_hn}
    \lim_{n \to \infty}h_n(z) &= \lim_{n \to \infty} \frac{\partial \tilde{\mathcal{H}}_n}{\partial w}(z,0) \notag\\
    &= \lim_{n\to \infty} \rbr{1+\frac{\partial m_{2n}}{\partial w}(z,0)}\beta^\top  \sbr{1+ m_{2n}(z,0)\Sigma}^{-2}\Sigma \beta\notag\\
    &= \rbr{1+\frac{\partial m_{2}}{\partial w}(z,0)}\int \frac{x}{\sbr{1+m_2(z,0)x}^2} \, dG(x)
\end{align}
almost surely, 
where, according to \citep{zhang2007spectral}, 
$m_{1n}(z,w) \to m_1(z,w)$ and $m_{1n}(z,w) \to m_1(z,w)$ almost surely as $n\rightarrow \infty$. Moreover, for any given $(z,w) \in \mathbb{D}$, $(m_1(z,w), m_2(z,w)) \in \RR^{+}\times \RR^{+}$ is the unique solution to the self-consistent equations
\begin{align}\label{equations:appendix_deterministic_selfconsistent}
    m_1(z,w) &= \int \frac{x}{-z\sbr{1+wx+xm_2(z,w)}} \, dH(x), \notag\\
    m_2(z,w) &= \psi \phi^{-1} \frac{1}{-z\sbr{1+m_1(z,w)}}.
\end{align}
Substituting $m_2$ into $m_1$ in \eqref{equations:appendix_deterministic_selfconsistent} and using  $m_1(z,0)=m_1(z)$ as defined in \eqref{equations:appendix_selfconsistent}, we have after some calculations 
\begin{align*}
    \lim_{z\to 0 ^-}z\frac{\partial m_1}{\partial w}(z,0) = \frac{\int \frac{c_0^2 x^2}{\rbr{c_0-x\psi\phi^{-1}}^2} \, dH(x) }{1-\int \frac{ x^2\psi\phi^{-1}}{\rbr{c_0-x\psi\phi^{-1}}^2} \, dH(x)} = \phi\psi^{-1}c_1c_0^2,
\end{align*}
where  $c_0$ is defined in \eqref{equation:a} and $\lim_{z\to 0^-}zm_1(z) = c_0$. Using \eqref{limit:appendix_exchange_deterministic} and continue \eqref{limit:appendix_deterministic_hn}, we have almost surely
\begin{align*}
    \lim_{n\to \infty}B_{(\beta, S,X)}(\hat\beta^{S};\beta) &= \lim_{z\to 0^-} \lim_{n\to \infty}h_n(z)\\ 
    &= \lim_{z\to 0^-} \rbr{1+\frac{\partial m_{2}}{\partial w}(z,0)}\int \frac{x}{\sbr{1+m_2(z,0)x}^2} \, dG(x)\\
    &= \lim_{z\to 0^-}\rbr{1+\psi\phi^{-1}\frac{z\frac{\partial m_1}{\partial w}(z,0)}{\sbr{z + zm_1(z)}^2}}\int \frac{x}{\sbr{1-\psi\phi^{-1}x\frac{1}{z+zm_1(z)}}^2} \, dG(x)\\
    &= \rbr{1+c_1}\int \frac{c_0^2x}{\rbr{c_0-x\psi\phi^{-1}}^2} \, dG(x).
\end{align*}

\section{Proofs for central limit theorems}\label{appendix:sec:clt}

\subsection{Proof of Theorem~\ref{CLT_under}}
For the underparameterized case with $\phi\psi^{-1}<1$, by Lemma~\ref{lemma_bias_variance}, it holds that
\$
B_{(S,X)}(\hat \beta^S,\beta)  = 0,\quad
V_{(S,X)}(\hat \beta^S,\beta)  = \sigma^2 {\rm tr}\left\{(X^\top  S^\top  SX)^+ \right\}.
\$
Assume that $p<m<n$. Then $X^\top  S^\top  SX$ is of rank $p$ and then invertible. So
\$
R_{(S,X)}\rbr{\hat{\beta}^S;\beta} & = V_{(S,X)}(\hat \beta^S,\beta) = \sigma^2 {\rm tr}\left\{(X^\top  S^\top  SX)^{-1} \right\}
 = \frac{\sigma^2}{p}{\rm tr}\left\{ (X^\top  S^\top  SX/p)^{-1}\right\}\\
& = \sigma^2 \int \frac{1}{t}d F^{X^\top  S^\top  SX/p}(t)
=:  \sigma^2 s_{1n}(0),
\$
where $s_{1n}(\cdot)$ denotes the Stieltjes transformation of the ESD $F^{X^\top  S^\top  SX/n}$ of ${X^\top  S^\top  SX/n}\in\RR^{p\times p}$. 

Let $\underline B_n$ denote the ESD of $S^\top  S \in \RR^{n\times n}$ and $\underline B$ its LSD. 
Define 
\$
Q_n:= \frac{1}{p}(S^\T  S)^{1/2}XX^\T  (S^\T  S)^{1/2}\in\RR^{n\times n}.
\$
Under Assumptions~\ref{CLT_assumption}, the matrix $Q_n$ has the LSD $F^{\phi^{-1},\underline B}$, which is the Marcehnko-Pastur law. 
Further, we define
\$
\cG_n(t) : &  = n \left\{F^{Q_n} (t) - F^{\phi_n^{-1},\underline B_n}(t) \right\},
\$
where we use $F^{\phi_n^{-1},\underline B_n}$ instead of  $F^{\phi^{-1},\underline B}$ to avoid discussing the convergence of $(\phi_n^{-1},\underline B_n)$ to $(\phi^{-1},\underline B)$. For orthogonal sketching, $\underline B_n = (1-\psi_n) \delta_0 + \psi_n\delta_1$ and $\underline B = (1-\psi) \delta_0 + \psi\delta_1 $. 
Notice that
\$
\cG_n(t) = p \left\{F^{X^\top S^\top  SX/p} (t) - \underline F^{\phi_n^{-1},\underline B_n}(t) \right\}
\$
with $\underline F^{\phi_n^{-1},\underline B_n}:= (1-\phi_n^{-1})\delta_0 +\phi_n^{-1}F^{\phi_n^{-1},\underline B_n}$. By Theorem~\ref{thm:isotropic_limiting_risk}, we have 
\$
R_{(S,X)}(\hat \beta^S,\beta) \overset{a.s.}{\longrightarrow} \frac{\sigma^2\phi\psi^{-1}}{1-\phi\psi^{-1}}.
\$
Further, we can rewrite 
\#\label{process_1}
p\left(R_{(S,X)}(\hat \beta^S,\beta) -  \frac{\sigma^2\phi_n\psi_n^{-1}}{1-\phi_n\psi_n^{-1}}\right)
& = \sigma^2 \int \frac{1}{t} d \cG_n(t),
\#
where we replaced $(\phi,\psi)$ by $(\phi_n,\psi_n)$ when centering. \\

We prove the CLT for  \eqref{process_1} in the following two steps. 

{\it Step 1.} Given the sketching matrix $S$, by \citep[Theorem~2.1]{zheng2015substitution}, the RHS of \eqref{process_1} converges to a Gaussian distribution with mean $\mu_1$ and variance $\sigma_1^2$ specified as 
\$
\mu_1= & -\frac{\sigma^2}{2\pi i}\oint_\cC \frac{1}{z}
\frac{\phi^{-1}\int \underline s_{\phi}(z)^3 t^2 (1+t \underline s_{\phi}(z))^{-3}d \underline B(t)}{\left\{1-\phi^{-1}\int \underline s_{\phi}^2 t^2 (1+t\underline s_{\phi}(z))^{-2}d \underline B(t)\right\}^2} d z \\
& - \frac{\sigma^2 (\nu_4-3)}{2\pi i}\oint_\cC \frac{1}{z}
\frac{\phi^{-1}\int \underline s_{\phi}(z)^3 t^2 (1+t \underline s_{\phi}(z))^{-3}d \underline B(t)}{1-\phi^{-1}\int \underline s_{\phi}^2 t^2 (1+t\underline s_{\phi}(z))^{-2}d \underline B(t)} d z 
\$
and 
\$
\sigma_1^2 = &  -\frac{2\sigma^4 }{4 \pi^2}\oint_{\cC_1}\oint_{\cC_2} \frac{1}{z_1 z_2}\frac{1}{(\underline s_\phi(z_1)- \underline s_{\phi}(z_2))^2} d \underline s_\phi(z_1) d \underline s_\phi(z_2)\\
& - \frac{\phi^{-1}(\nu_4-3)}{4\pi^2} \oint_{\cC_1}\oint_{\cC_2} \frac{1}{z_1 z_2} \left\{\int \frac{t}{(\underline s_\phi(z_1)+1)^2} \frac{t}{(\underline s_\phi(z_2)+1)^2} d \underline B(t)\right\} d \underline s_\phi(z_1) d \underline s_\phi(z_2),
\$
where $\underline s_{\phi}(\cdot)$ denotes the Stieltjes transformation of $\underline F^{\phi^{-1},\underline B}$, and  $\cC$, $\cC_1$ and $\cC_2$ are contours containing the support of the LSD of $\underline s_{\phi}(z)$.

Following the calculation of $\mu_c$ and $\sigma_c^2$ in the proof of \citep[Theorem~4.1]{li2021asymptotic}, we get
\$
\mu_1 & = \frac{\sigma^2 \phi^2\psi^{-2}}{(\phi\psi^{-1}-1)^2}+\frac{\sigma^2\phi^2\psi^{-2}(\nu_4-3)}{1-\phi\psi^{-1}},\quad
\sigma_1^2  =\frac{2\sigma^4\phi^3\psi^{-3}}{(\phi\psi^{-1}-1)^4}+\frac{\phi^3\psi^{-3}\sigma^4(\nu_4-3)}{(1-\phi\psi^{-1})^2}.
\$

{\it Step 2.} Note that the mean $\mu_1$ and variance $\sigma^2_1$ are nonrandom.  It means that the limiting distribution of the RHS of \eqref{process_1} is independent of conditioning $SS^\top $. So it asymptotically follows the Gaussian distribution $\cN(\mu_1,\sigma_1^2)$.

\subsection{Proof of Theorem~\ref{CLT_over}}
For the overparameterized regime with $\phi\psi^{-1}>1$, when $\Sigma=I_p$, we have
\$
B_{(S,X)}(\hat \beta^S,\beta) & = \frac{\alpha^2}{p}{\rm tr}\left\{I_p -(X^\top  S^\top  S X)^+ X^\top  S^\top  S X \right\}
 = \alpha^2 -\frac{\alpha^2}{p}{\rm tr}\left\{(X^\top  S^\top  S X)^{+} X^\top  S^\top  S X \right\}\\
& = \alpha^2 -\frac{\alpha^2}{p}{\rm tr}\left\{(S X X^\top  S^\top  )^{+} S X X^\top  S^\top \right\}
= \alpha^2 \left(1-\phi^{-1}\psi_n\right)
\$
and 
\$
V_{(S,X)}(\hat \beta^S,\beta) & = \sigma^2 {\rm tr}\left\{(X^\top  S^\top  SX)^{+} \right\}
= \sigma^2 \frac{1}{n}{\rm tr}\left\{(S X X^\top  S^\top /n)^{-1} \right\}\\
& = \sigma^2 \int \frac{1}{t}d F^{SXX^\top  S^\top  /n}(t) 
=: \sigma^2 \underline s_{1n}(0),
\$
where $\underline s_{1n}(z)$ denotes the Stieltjes transformation of the ESD $F^{SXX^\top  S^\top /n}$ of $SXX^\top  S^\top  /n$ and it satisfies
\$
\frac{p}{m}s_{1n}(z) = -\frac{1}{z} (\frac{p}{m}-1)+\underline s_{1n}(z). 
\$
Following the proof of \citep[Theorem~4.3]{li2021asymptotic}, we  get 
\$
\mu_2 & = \frac{\sigma^2 \phi\psi^{-1}}{(\phi\psi^{-1}-1)^2}+\frac{\sigma^2 (\nu_4-3)}{\phi\psi^{-1}-1},\quad
\sigma_2^2  =\frac{2\sigma^4\phi^3\psi^{-3}}{(\phi\psi^{-1}-1)^4}+\frac{\sigma^4\phi\psi^{-1}(\nu_4-3)}{(\phi\psi^{-1}-1)^2}.
\$


{
}

\subsection{Proof of Theorem \ref{thm:clt_over_conditioningbeta}}
Leveraging \citep[Theorem 7.2]{bai2008central} or following a similar proof to that of \citep[Theorem 4.5]{li2021asymptotic}, we   obtain
\begin{align}
    &\sqrt{p}\cbr{B_{(\beta, S,X)}(\hat\beta^{S};\beta) - \alpha^2(1-\phi_n^{-1}\psi_n)} \notag \\ =&  \sqrt{p}\cbr{\beta^\top  \sbr{I_p - \rbr{  X^\top  V^\top  VX}^{+}X^\top  V^\top  VX } \beta - \alpha^2(1-\phi_n^{-1}\psi_n)} \overset{D}{\longrightarrow} \cN(0, d^2 = d_1^2 + d_2^2), \notag
\end{align}
where 
\begin{align*}
  d_1^2 = wp^2\rbr{ \EE(\beta_1^4) - \gamma^2},\ d_2^2 = 2p^2(\theta-w)\gamma^2  
\end{align*}
and 
\begin{align*}
\gamma = \EE(\beta_1^2) = \frac{\alpha^2}{p},\ \theta = \lim_{p\to \infty}\frac{1}{p} {\rm tr}\left\{I_p -(X^\top  S^\top  S X)^+ X^\top  S^\top  S X \right\}  =    1- \phi^{-1}\psi.
\end{align*}
Here $w$ is the limit of the average of squared diagonal elements of $\sbr{I_p - \rbr{  X^\top  V^\top  VX}^{+}X^\top  V^\top  VX }$, which will be canceled out in $d^2$ under the assumption that $\beta$ is multivariate normal. After some simple calculation, we have $d^2 = 2(1-\phi^{-1}\psi)\alpha^4$. Thus, 
\begin{align}\label{equal_clt_fixedbeta_bias}
    \sqrt{p}\cbr{B_{(\beta, S,X)}(\hat\beta^{S};\beta) - \alpha^2(1-\phi_n^{-1}\psi_n)} \overset{D}{\longrightarrow} \cN (0, 2(1-\phi^{-1}\psi)\alpha^4).
\end{align}
Moreover, we have proved in the Theorem \ref{CLT_over} that
\begin{align}\label{equal_clt_fixedbeta_var}
 p\left(V_{(\beta, S,X)}(\hat \beta^S;\beta)-\frac{\sigma^2}{\phi_n\psi_n^{-1}-1}\right)
\overset{D}{\longrightarrow} \cN (\mu_2, \sigma^2_2),
\end{align}
where 
\$
\mu_2 & = \frac{\sigma^2 \phi\psi^{-1}}{(\phi\psi^{-1}-1)^2}+\frac{\sigma^2 (\nu_4-3)}{\phi\psi^{-1}-1},\quad
\sigma_2^2  =\frac{2\sigma^4\phi^3\psi^{-3}}{(\phi\psi^{-1}-1)^4}+\frac{\sigma^4\phi\psi^{-1}(\nu_4-3)}{(\phi\psi^{-1}-1)^2}.
\$
Combining \eqref{equal_clt_fixedbeta_bias} and \eqref{equal_clt_fixedbeta_var} completes the proof. 

\end{document}